\newcommand{\bs}{\boldsymbol}
\newtheorem{thm}{Theorem}[section]
\newtheorem{cor}[thm]{Corollary}
\newtheorem{lem}[thm]{Lemma}
\newtheorem{prop}[thm]{Proposition}
\newtheorem{assum}[thm]{Assumption}
\theoremstyle{definition}
\newtheorem{defn}[thm]{Definition}
\theoremstyle{remark}
\newtheorem{rem}[thm]{Remark}
\numberwithin{equation}{section}
\begin{document}

\title[Spectral Convergence of Large Block-Hankel Gaussian Random Matrices]
{Spectral Convergence of Large Block-Hankel Gaussian Random Matrices}
\author{Philippe Loubaton}
\address{Universit\'e Paris-Est\\
Laboratoire d'Informatique Gaspard Monge, UMR CNRS 8049\\
5 Bd. Descartes, Cit\'e Descartes, Champs sur Marne\\
Marne la Vall\'ee 77454 Cedex 2\\
France}
\email{loubaton@univ-mlv.fr}
\thanks{This work was supported by the Labex B\'ezout under grant ANR-10-LABX-0058}
\author{Xavier Mestre}
\address{Centre Tecnol\`ogic de Telecomunicacions de Catalunya\\
Av. Carl Friedrich Gauss, 7, Parc Mediterrani de la Tecnologia\\
08860 Castelldefels\\
Spain}
\email{xavier.mestre@cttc.cat}
\subjclass{Primary 60B20; Secondary 15B52}
\keywords{Large random matrices, Stieltjes transform of positive matrix-valued measures, Hankel matrices.}
\date{\today}
\dedicatory{Dedicated to Prof. Daniel Alpay on occasion of his 60th birthday}

\begin{abstract}
This paper studies the behaviour of the empirical eigenvalue distribution of 
large random matrices ${\bf W}_N {\bf W}_N^{H}$ where ${\bf W}_N$ is a $ML \times N$ 
matrix, whose $M$ block lines of dimensions $L \times N$ are mutually independent 
Hankel matrices constructed from complex Gaussian correlated stationary random sequences. In the 
asymptotic regime where $M \rightarrow +\infty$, $N \rightarrow +\infty$ and 
$\frac{ML}{N} \rightarrow c > 0$, it is shown using the Stieltjes transform 
approach that the empirical eigenvalue distribution of  ${\bf W}_N {\bf W}_N^{H}$
has a deterministic behaviour which is characterized.  
\end{abstract}
\maketitle

\section{Introduction}

\subsection{The addressed problem and summary of the main results.}

In this paper, we consider a $ML\times N$ block-Hankel matrix $\mathbf{W}_{N}$
composed of $M$ Hankel matrices gathered on top of each other, namely
\[
\mathbf{W}_{N}=\left[
\begin{array}
[c]{ccc}
\mathbf{W}_{1,N}^{T} & \cdots & \mathbf{W}_{M,N}^{T}
\end{array}
\right]  ^{T}.
\]
For each $m=1,\ldots,M$, $\mathbf{W}_{m,N}$ is a Hankel matrix of dimensions
$L\times N$, with $\left(  i,j\right)  $th entry equal to
\[
\left\{  \mathbf{W}_{m,N}\right\}  _{i,j}=w_{m,N}(i+j-1)
\]
for $1\leq i\leq L$, $1\leq j\leq N$, where the random variables
$(w_{m,N}(n))_{m=1,\ldots,M,n=1,\ldots,N+L-1}$ are zero mean complex Gaussian
random variables . The different blocks $\mathbf{W}_{m,N}$ are independent,
but we allow for some time invariant correlation structure within each Hankel
matrix, namely
\[
\mathbb{E}\left[  w_{m,N}(k)w_{m^{\prime},N}^{\ast}(k^{\prime})\right]
=\frac{r_{m}\left(  k-k^{\prime}\right)  }{N}\delta_{m-m^{\prime}}
\]
where $r_{m}\left(  k\right)  $, $k\in\mathbb{Z}$, is a sequence of
correlation coefficients defined as
\[
r_{m}\left(  k\right)  =\int_{0}^{1}\mathcal{S}_{m}\left(  \nu\right)
\mathrm{e}^{2\mathrm{i}\pi\nu k}d\nu
\]
where $(\mathcal{S}_{m})_{m=1,\ldots,M}$ are positive functions. We denote by
$(\hat{\lambda}_{k,N})_{k=1,\ldots,ML}$ the eigenvalues of random matrix
$\mathbf{W}_{N}\mathbf{W}_{N}^{H}$, where $($\textperiodcentered$)^{H}$ stands
for transpose conjugate. The purpose of this paper is to study the asymptotic
properties of the empirical eigenvalue distribution $d\hat{\mu}_{N}
(\lambda)=\frac{1}{ML}\sum_{k=1}^{ML}\delta_{\lambda-\hat{\lambda}_{k,N}}$ of
$\mathbf{W}_{N}\mathbf{W}_{N}^{H}$ when $M\rightarrow+\infty$, $N\rightarrow
+\infty$ and $L$ is such that $c_{N}=\frac{ML}{N}$ converges towards a non
zero constant $c>0$.

It is well established that the asymptotic behaviour of the empirical
eigenvalue distribution of large Hermitian matrices can be evaluated by
studying the behaviour of their Stieltjes transforms. In the context of the
present paper, the Stieltjes transform of $d\hat{\mu}_{N}(\lambda)$ is the
function $q_{N}(z)$ defined on $\mathbb{C}\setminus\mathbb{R}^{+}$ by
\[
q_{N}(z)=\int_{\mathbb{R}^{+}}\frac{1}{\lambda-z}\,d\hat{\mu}_{N}
(\lambda)=\frac{1}{ML}\sum_{k=1}^{ML}\frac{1}{\hat{\lambda}_{k,N}-z}
\]
and also coincides with $q_{N}(z)=\frac{1}{ML}\mathrm{tr}\mathbf{Q}_{N}(z)$
where $\mathbf{Q}_{N}(z)$ is the resolvent of matrix $\mathbf{W}_{N}
\mathbf{W}_{N}^{H}$ defined by
\[
\mathbf{Q}_{N}(z)=\left(  \mathbf{W}_{N}\mathbf{W}_{N}^{H}-z\mathbf{I}\right)
^{-1}.
\]
We denote by $\mathcal{S}_{ML}(\mathbb{R}^{+})$ the set of all $ML\times ML$
matrix valued functions defined on $\mathbb{C}\setminus\mathbb{R}^{+}$ by
\[
\mathcal{S}_{ML}(\mathbb{R}^{+})=\left\{  \int_{\mathbb{R}^{+}}\frac
{1}{\lambda-z}\,d{\boldsymbol\mu}(\lambda)\right\}
\]
where ${\boldsymbol\mu}$ is a positive $ML\times ML$ matrix-valued measure
carried by $\mathbb{R}^{+}$ satisfying ${\boldsymbol\mu}(\mathbb{R}
^{+})=\mathbf{I}_{ML}$. In this paper, we establish that there exists a
function $\mathbf{T}_{N}(z)$ of $\mathcal{S}_{ML}(\mathbb{R}^{+})$, defined as
the unique element of $\mathcal{S}_{ML}(\mathbb{R}^{+})$ satisfying a certain
functional equation, that verifies
\begin{equation}
\frac{1}{ML}\mathrm{tr}\left(  (\mathbf{Q}_{N}(z)-\mathbf{T}_{N}
(z))\mathbf{A}_{N}\right)  \rightarrow0
\label{eq:convergence-normalized-trace}
\end{equation}
almost surely for each $z\in\mathbb{C}\setminus\mathbb{R}^{+}$, where
$(\mathbf{A}_{N})_{N\geq1}$ is an arbitrary sequence of deterministic
$ML\times ML$ matrices satisfying $\sup_{N}\Vert\mathbf{A}_{N}\Vert<+\infty$.
Particularized to the case where $\mathbf{A}_{N}=\mathbf{I}$, this property
implies that
\begin{equation}
q_{N}(z)-t_{N}(z)\rightarrow0 \label{eq:convergence-q-t}
\end{equation}
almost surely for each $z\in\mathbb{C}\setminus\mathbb{R}^{+}$, where
$t_{N}(z)=\frac{1}{ML}\mathrm{tr}(\mathbf{T}_{N}(z))$ is the Stieltjes
transform of the probability measure $\mu_{N}=\frac{1}{ML}\mathrm{tr}
({\boldsymbol\mu}_{N})$. In the present context, this turns out to imply that
almost surely, for each bounded continuous function $\phi$ defined on
$\mathbb{R}^{+}$, it holds that
\begin{equation}
\frac{1}{ML}\sum_{k=1}^{ML}\phi(\hat{\lambda}_{k,N})-\int_{\mathbb{R}^{+}}
\phi(\lambda)\,d\mu_{N}(\lambda)\rightarrow0.
\label{eq:convergence-linear-statistics}
\end{equation}
It is also useful to study the respective rates of convergence towards $0$ of
the variance and of the mean of $\frac{1}{ML}\sum_{k=1}^{ML}\phi(\hat{\lambda
}_{k,N})-\int_{\mathbb{R}^{+}}\phi(\lambda)\,d\mu_{N}(\lambda)$ when $\phi$ is
a smooth function. For this, it appears sufficient to restrict to the case
$\phi(\lambda)=\frac{1}{\lambda-z}$ where $z\in\mathbb{C}\setminus
\mathbb{R}^{+}$, i.e. to study the rate of convergence of $\mathrm{var}
(q_{N}(z))=\mathbb{E}|q_{N}(z)-\mathbb{E}(q_{N}(z))|^{2}$ and of
$\mathbb{E}(q_{N}(z))-\int_{\mathbb{R}^{+}}\phi(\lambda)\,d\mu_{N}(\lambda)$.
More generally, if $(\mathbf{A}_{N})_{N\geq1}$ is any sequence of
deterministic $ML\times ML$ matrices satisfying $\sup_{N}\Vert\mathbf{A}
_{N}\Vert<+\infty$, we establish that
\begin{equation}
\mathrm{var}\left[  \frac{1}{ML}\mathrm{tr}\left(  \mathbf{Q}_{N}
(z)\mathbf{A}_{N}\right)  \right]  =\mathcal{O}(\frac{1}{MN})
\label{eq:convergence-rate-variance}
\end{equation}
and that, provided $\frac{L^{3/2}}{MN}\rightarrow0$,
\begin{equation}
\frac{1}{ML}\mathrm{tr}\left(  \left(  \mathbb{E}(\mathbf{Q}_{N}
(z))-\mathbf{T}_{N}(z)\right)  \mathbf{A}_{N}\right)  =\mathcal{O}(\frac
{L}{MN}) \label{eq:convergence-rate-biais}
\end{equation}
for each $z\in\mathbb{C}\setminus\mathbb{R}^{+}$.

In this paper, we concentrate on the characterization of the asymptotic
behaviour of the terms $\frac{1}{ML}\mathrm{tr}\left(  \mathbf{Q}
_{N}(z)\mathbf{A}_{N}\right)  $, and do not discuss on the behaviour of
general linear statistics $\frac{1}{ML}\sum_{k=1}^{ML}\phi(\hat{\lambda}
_{k,N})$ of the eigenvalues. In order to establish our results, we follow the
general approach introduced in \cite{pastur-simple} and developed in more
general contexts in \cite{pastur-shcherbina-book}. This approach takes benefit
of the Gaussianity of the random variables $w_{m}(n)$, and use the
Poincar\'{e}-Nash inequality to evaluate the variance of various terms and the
integration by parts formula to evaluate approximations of matrix
$\mathbb{E}(\mathbf{Q}_{N}(z))$. \newline

Apart large random matrix methods, the properties of Stieltjes transforms of
positive matrix valued measures play a crucial role in this paper. The first
author (in the alphabetic order) of this paper had the chance to learn these tools from Prof. D. Alpay
at the occasion of past collaborations. The authors are thus delighted to
dedicate this paper to Prof. D. Alpay on occasion of his 60th birthday.

\subsection{Motivations}

The present paper is motivated by the problem of testing whether $M$ complex
Gaussian zero mean times series $(x_{m}(n))_{n\in\mathbb{Z}}$ are mutually
independent. For each $m=1,\ldots,M$, $x_{m}$ is observed from time $n=1$ to
$n=N$, and a relevant statistics depending on $((x_{m}(n))_{n=1,\ldots
,N})_{m=1,\ldots,M}$ has to be designed and studied. A reasonable approach can
be drawn by noting that if the $M$ time series are independent, then, for each
integer $L$, the covariance matrix $\mathbf{R}_{L}$ of $ML$-dimensional random
vector $\mathbf{x}_{L}(n)=(x_{1}(n),\ldots,x_{1}(n+L-1),x_{2}(n),\ldots
,x_{2}(n+L-1),\ldots,x_{M}(n),\ldots,x_{M}(n+L-1))^{T}$ is block diagonal, a
property implying that
\begin{equation}
\kappa_{N}=\frac{1}{ML}\left(  \log\mathrm{det}(\mathbf{R}_{L})-\sum_{m=1}
^{M}\log\mathrm{det}(\mathbf{R}_{L}^{m,m})\right)  =0 \label{eq:test-simple}
\end{equation}
where $\mathbf{R}_{L}^{m,m}$ represents the $m$-th $L\times L$ diagonal block
of $\mathbf{R}_{L}$. Therefore, it seems relevant to approximate matrix
$\mathbf{R}_{L}$ by the standard estimator $\hat{\mathbf{R}}_{L}$ defined by
\[
\hat{\mathbf{R}}_{L}=\frac{1}{ML}\sum_{n=1}^{N}\mathbf{x}_{L}(n)\left(
\mathbf{x}_{L}(n)\right)  ^{H}
\]
so that we can evaluate the term $\hat{\kappa}_{N}$, obtained by replacing
$\mathbf{R}_{L}$ by $\hat{\mathbf{R}}_{L}$ in (\ref{eq:test-simple}), and
compare it to $0$. The present paper is motivated by the study of this
particular test under the hypothesis that the series $(x_{m})_{m=1,\ldots,M}$
are uncorrelated and assuming that $M$ and $N$ are both large. 
In this context, a crucial problem is to choose parameter $L$. On one hand,
$L$ should be chosen in such a way that $ML/N<<1$ in order to make the
estimation error $\Vert\hat{\mathbf{R}}_{L}-\mathbf{R}_{L}\Vert$ reasonably
low, and thus $\hat{\kappa}_{N}$ close to $0$ under the uncorrelation
hypothesis. On the other hand, choosing a small value for $L$ is not
satisfying because comparing $\hat{\kappa}_{N}$ to $0$ allows to test that
$\mathbb{E}(x_{m}(l)x_{m^{\prime}}^{\ast}(l^{\prime}))=0$ for each
$(m,m^{\prime})$ only for $l,l^{\prime}\in\lbrack0,\ldots,L-1]$, a property
that does not imply formally that the time series $x_{m}$ and $x_{m^{\prime}}$
are independent. Therefore, choosing $L$ as large as possible is relevant. In
this case, the ratio $\frac{ML}{N}$ may no longer be very small, and
$\hat{\kappa}_{N}$ may not converge towards $0$. It is thus of fundamental
interest to evaluate the behaviour of $\hat{\kappa}_{N}$ when $M$ and $N$ are
large and that $\frac{ML}{N}$ is not negligible. This question is connected to
the problem addressed in the present paper because the following results
potentially allow to establish that $\frac{1}{ML}\log\mathrm{det}
(\hat{\mathbf{R}}_{L})$ has a deterministic behaviour that can be
characterized. This term can indeed be written as
\[
\frac{1}{ML}\log\mathrm{det}(\hat{\mathbf{R}}_{L})=\frac{1}{ML}\sum_{k=1}
^{ML}\phi(\hat{\lambda}_{k,N})
\]
where $(\hat{\lambda}_{k,N})_{k=1,\ldots,ML}$ are the eigenvalues of
$\hat{\mathbf{R}}_{L}$ and where $\phi(\lambda)=\log\lambda$. Moreover, if we
denote by $w_{m,N}(n)$ the random variable $w_{m,N}(n)=\frac{1}{\sqrt{N}}
x_{m}(n)$, then it is easily seen that matrix $\hat{\mathbf{R}}_{L}$ coincides
with matrix $\mathbf{W}_{N}\mathbf{W}_{N}^{H}$ where $\mathbf{W}_{N}$ is
constructed from the $w_{m,N}(n)$ as above, up to end effects (because matrix
$\mathbf{W}_{N}$ depends on random variables $(w_{m}(N+l))_{m=1,\ldots
,M,l=1,\ldots,L-1}$ while matrix $\hat{\mathbf{R}}_{L}$ does not depend on
these entries). However, these end effects can be shown to be negligible.
Therefore, the asymptotic behaviour of $\frac{1}{ML}\log\mathrm{det}
(\hat{\mathbf{R}}_{L})$ appears to be a consequence of the results of the
present paper. We finally mention that under some reasonable assumptions,
$\Vert\hat{\mathbf{R}}_{L}^{m,m}-\mathbf{R}_{L}^{m,m}\Vert\rightarrow0$ so
that it holds that
\[
\frac{1}{ML}\sum_{m=1}^{M}\log\mathrm{det}(\hat{\mathbf{R}}_{L}^{m,m}
)-\frac{1}{ML}\sum_{m=1}^{M}\log\mathrm{det}(\mathbf{R}_{L}^{m,m}
)\rightarrow0
\]
In summary, the asymptotic behaviour of $\hat{\kappa}_{N}$ appears to be a
consequence the study of the empirical eigenvalue distribution of
$\mathbf{W}_{N}\mathbf{W}_{N}^{H}$ in the asymptotic regime where
$M,N\rightarrow+\infty$ in such a way that $\frac{ML}{N}$ converges towards a
non zero constant.

\subsection{On the literature.}

The study of the asymptotic behaviour of large random Gram matrices has a long
history. Since the pionneering work of Marcenko-Pastur in 1967
(\cite{marchenko67}), a number of random matrix models have been considered
(see e.g. \cite{bai-silverstein-book}, \cite{pastur-shcherbina-book} and the
references therein). In the following, we mention the previous works that are
connected to the present paper. As matrix $\mathbf{W}_{N}$ is a block line
matrix with $L\times N$ blocks, we mention the works of Girko
(\cite{girko-book}, chapter 16) as well as \cite{bryc-speicher-2006} devoted
to the case where the blocks are i.i.d. We however mention that
\cite{girko-book} and \cite{bryc-speicher-2006} did not characterize the rates
of convergence and that the techniques used in these works do not allow to
address the case where $L\rightarrow+\infty$. The works devoted to Hankel
matrices are also relevant. \cite{basak-bose-sen-2012} addressed the case
where $M=1$ and $N,L\rightarrow+\infty$ at the same rate, except that in
\cite{basak-bose-sen-2012}, the random variables $w_{1,N}(n)$ are forced to
$0$ for $N<n\leq N+L-1$. Using the moments method, \cite{basak-bose-sen-2012}
showed that the empirical eigenvalue distribution of $\mathbf{W}_{N}
\mathbf{W}_{N}^{\ast}$ converges towards a non compactly supported limit
distribution. The random matrix model considered in \cite{loubaton16} is
similar to matrix $\mathbf{W}_{N}$ of the present paper, except that in
\cite{loubaton16}, for each $m=1,\ldots,M$, the random variables $(w_{m}(n))$
are uncorrelated, i.e. the spectral densities $(\mathcal{S}_{m}(\nu
))_{m=1,\ldots,M}$ are reduced to $\mathcal{S}_{m}(\nu)=1$ for each $\nu$.
Using the Poincar\'{e}-Nash inequality and the integration by parts formula,
\cite{loubaton16} studied the asymptotic behaviour of the empirical eigenvalue
distribution $\hat{\mu}_{N}$ in the asymptotic regime $M,N\rightarrow+\infty$
and $\frac{ML}{N}\rightarrow c$, $c>0$. It was established that function
$t_{N}(z)$ defined in (\ref{eq:convergence-q-t}) coincides with the Stieltjes
transform of the Marcenko-Pastur distribution with parameter $c_{N}$, so that
$\hat{\mu}_{N}$ converges weakly almost surely towards the Marcenko pastur
distribution. The rates of convergence of the variance and of the expectation
of $q_{N}(z)-t_{N}(z)$ are both characterized. Finally, \cite{loubaton16}
proved that provided $L=\mathcal{O}(N^{\alpha})$ with $\alpha<2/3$, then the
extreme non zero eigenvalues of $\mathbf{W}_{N}\mathbf{W}_{N}^{\ast}$ converge
almost surely towards the end points of the support of the Marcenko-Pastur
distribution. Therefore, the present paper is a partial generalization of
\cite{loubaton16}.

\subsection{Assumptions, general notations, and background on Stieltjes
transforms of positive matrix valued measures.}

\textbf{Assumptions on $L,M,N$}

\begin{assum}
\label{as:standard}

\begin{itemize}
\item All along the paper, we assume that $L,M,N$ satisfy $M\rightarrow
+\infty,N\rightarrow+\infty$ in such a way that $c_{N}=\frac{ML}{N}\rightarrow
c$, where $0<c<+\infty$. In order to shorten the notations, $N\rightarrow
+\infty$ should be understood as the above asymptotic regime.

\item In section \ref{sec:convergence-R-T}, $L,M,N$ also satisfy
$\frac{L^{3/2}}{MN} \rightarrow0$ or equivalently $\frac{L}{M^{4}}
\rightarrow0$.
\end{itemize}
\end{assum}

\textbf{Assumptions on sequences $(r_{m})_{m=1,\ldots,M}$ and spectral
densities $(\mathcal{S}_{m})_{m=1,\ldots,M}$}. We assume that sequences
$(r_{m})_{m=1,\ldots,M}$ satisfy the condition
\begin{equation}
\sup_{M}\sum_{n\in\mathbb{Z}}\left(  \frac{1}{M}\sum_{m=1}^{M}|r_{m}
(n)|^{2}\right)  ^{1/2}<+\infty.\label{eq:condition-rm}
\end{equation}
As it holds that $\left(  \frac{1}{M}\sum_{m=1}^{M}|r_{m}(n)|\right)  ^{2}
\leq\frac{1}{M}\sum_{m=1}^{M}|r_{m}(n)|^{2}$, condition (\ref{eq:condition-rm}
) implies that
\[
\sup_{M}\frac{1}{M}\sum_{m=1}^{M}\sum_{n\in\mathbb{Z}}|r_{m}(n)|<+\infty
\]
and that for each $m$, $\sum_{n\in\mathbb{Z}}|r_{m}(n)|<+\infty$. Therefore,
each spectral density $\mathcal{S}_{m}$ is continuous. We also assume that
\begin{align}
\sup_{M} \sup_{m=1,\ldots,M}\max_{\nu\in\lbrack0,1]}\mathcal{S}_{m}(\nu) &
<+\infty\label{eq:upperbound-S}\\
\inf_{M} \inf_{m=1,\ldots,M}\min_{\nu\in\lbrack0,1]}\mathcal{S}_{m}(\nu) &
>0.\label{eq:lowerbound-S}
\end{align}
In the following, we denote by $\mathcal{R}(k)$ the diagonal matrix
\begin{equation}
\mathcal{R}(k)=\mathrm{diag}(r_{1}(k),\ldots,r_{M}
(k)).\label{eq:def-mathcal-R}
\end{equation}

\textbf{General notations.} \newline

In the following, we will often drop the index $N$, and will denote
$\mathbf{W}_{N},\mathbf{Q}_{N},\ldots$ by $\mathbf{W}, \mathbf{Q},\ldots$ in
order to simplify the notations. The $N$ columns of matrix $\mathbf{W}$ are
denoted $(\mathbf{w}_{j})_{j=1,\ldots,N}$. For $1\leq l\leq L$, $1\leq m\leq
M$, and $1\leq j\leq N$, $\mathbf{W}_{i,j}^{m}$ represents the entry $\left(
i+(m-1)L,j\right)  $ of matrix $\mathbf{W}$.

If $\mathbf{A}$ is a $ML \times ML$ matrix, we denote by $\mathbf{A}
^{m_{1},m_{2}}_{i_{1},i_{2}}$ the entry $(i_{1} + (m_{1}-1)L, i_{2} +
(m_{2}-1)L)$ of matrix $\mathbf{A}$, while $\mathbf{A}^{m_{1},m_{2}}$
represents the $L \times L$ matrix $(\mathbf{A}^{m_{1},m_{2}}_{i_1,i_2})_{1 \leq
(i_{1},i_{2}) \leq L}$.

$\mathbb{C}^{+}$ denotes the set of complex numbers with strictly positive
imaginary parts. The conjuguate of a complex number $z$ is denoted $z^{\ast}$.
If $z\in\mathbb{C}\setminus\mathbb{R}^{+}$, we denote by $\delta_{z}$ the
term
\begin{equation}
\delta_{z}=\mathrm{dist}(z,\mathbb{R}^{+}).\label{eq:def-delta_z}
\end{equation}
The conjugate transpose of a matrix $\mathbf{A}$ is denoted $\mathbf{A}^{H}$
while the conjugate of $\mathbf{A}$ (i.e. the matrix whose entries are the
conjugates of the entries of $\mathbf{A}$) is denoted $\mathbf{A}^{\ast}$.

$\| \mathbf{A} \|$ represents the spectral norm of matrix $\mathbf{A}$. If
$\mathbf{A}$ and $\mathbf{B}$ are 2 matrices, $\mathbf{A} \otimes\mathbf{B}$
represents the Kronecker product of $\mathbf{A}$ and $\mathbf{B}$, i.e. the
block matrix whose block $(i,j)$ is $\mathbf{A}_{i,j} \, \mathbf{B}$. If
$\mathbf{A}$ is a square matrix, $\mathrm{Im}(\mathbf{A})$ and $\mathrm{Re}
(\mathbf{A})$ represent the Hermitian matrices
\[
\mathrm{Im}(\mathbf{A}) = \frac{\mathbf{A} - \mathbf{A}^{H}}{2i}, \;
\mathrm{Re}(\mathbf{A}) = \frac{\mathbf{A} + \mathbf{A}^{H}}{2}
\]

If $(\mathbf{A}_{N})_{N \geq1}$ (resp. $(\mathbf{b}_{N})_{N \geq1}$) is a
sequence of matrices (resp. vectors) whose dimensions increase with $N$,
$(\mathbf{A}_{N})_{N \geq1}$ (resp. $(\mathbf{b}_{N})_{N \geq1}$) is said to
be uniformly bounded if $\sup_{N \geq1} \| \mathbf{A}_{N} \| < +\infty$ (resp.
$\sup_{N \geq1} \| \mathbf{b}_{N} \| < +\infty$).

If $\nu\in\lbrack0,1]$ and if $R$ is an integer, we denote by $\mathbf{d}
_{R}(\nu)$ the $R$--dimensional vector $\mathbf{d}_{R}(\nu)=(1,e^{2\mathrm{i}
\pi\nu},\ldots,e^{2\mathrm{i}\pi(R-1)\nu})^{T}$, and by $\mathbf{a}_{L}(\nu)$
the vector $\mathbf{a}_{L}(\nu)=\frac{1}{\sqrt{R}}\,\mathbf{d}_{R}(\nu)$.

If $x$ is a complex-valued random variable, the variance of $x$, denoted by
$\mathrm{Var}(x)$, is defined by
\[
\mathrm{Var}(x) = \mathbb{E}(|x|^{2}) - \left|  \mathbb{E}(x) \right|  ^{2}
\]
The zero-mean random variable $x - \mathbb{E}(x)$ is denoted $x^{\circ}$. \newline

\textbf{Nice constants and nice polynomials}

\begin{defn}
\label{def:nice}
A nice constant is a positive
constant independent of the dimensions $L,M,N$ and complex variable $z$. A
nice polynomial is a polynomial whose degree is independent from $L,M,N$, and
whose coefficients are nice constants. 
\end{defn}
In the following, $P_{1}$ and $P_{2}$
will represent generic nice polynomials whose values may change from one line
to another, and $C(z)$ is a generic term of the form $C(z) = P_{1}(|z|)
P_{2}(1/\delta_{z})$. \\

\textbf{Background on Stieltjes transforms of positive matrix valued
measures.} In the following, we denote by $\mathcal{S}_{K}(\mathbb{R}^{+})$
the set of all Stieltjes transforms of $K \times K$ positive matrix-valued
measures ${\boldsymbol \mu}$ carried by $\mathbb{R}^{+}$ verifying
${\boldsymbol \mu}_{K}(\mathbb{R}^{+}) = \mathbf{I}_{K}$. The elements of the
class $\mathcal{S}_{K}(\mathbb{R}^{+})$ satisfy the following properties:

\begin{prop}
\label{prop:class-S} Consider an element $\mathbf{S}(z) = \int_{\mathbb{R}
^{+}} \frac{d \, {\boldsymbol \mu}(\lambda)}{\lambda- z}$ of $\mathcal{S}
_{K}(\mathbb{R}^{+})$. Then, the following properties hold true:

\begin{itemize}
\item (i) $\mathbf{S}$ is analytic on $\mathbb{C} \setminus\mathbb{R}^{+}$

\item (ii) $\mathrm{Im}(\mathbf{S}(z)) \geq0$ and $\mathrm{Im}(z \,
\mathbf{S}(z)) \geq0$ if $z \in\mathbb{C}^{+}$

\item (iii) $\lim_{y\rightarrow+\infty}-\mathrm{i}y\mathbf{S}(\mathrm{i}
y)=\mathbf{I}_{K}$

\item (iv) $\mathbf{S}(z)\mathbf{S}^{H}(z)\leq\frac{\mathbf{I}_{K}}{\delta
_{z}^{2}}$ for each $z\in\mathbb{C}\setminus\mathbb{R}^{+}$

\item (v) $\int_{\mathbf{R}^{+}}\lambda\,d{\boldsymbol\mu}(\lambda
)=\lim_{y\rightarrow+\infty}\mathrm{Re}\left(  -\mathrm{i}y(\mathbf{I}
_{K}+\mathrm{i}y\mathbf{S}(iy)\right)  $
\end{itemize}

Conversely, if a function $\mathbf{S}(z)$ satisfy properties (i), (ii), (iii),
then $\mathbf{S}(z)\in\mathcal{S}_{K}(\mathbb{R}^{+})$
\end{prop}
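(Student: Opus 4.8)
The plan is to verify the five listed properties by elementary manipulations of the integral representation $\mathbf{S}(z)=\int_{\mathbb{R}^+}(\lambda-z)^{-1}\,d{\boldsymbol\mu}(\lambda)$, and then prove the converse via a matrix-valued Nevanlinna--Herglotz (Hamburger-type) representation theorem. For property (i), analyticity follows because for $z$ in a fixed compact subset of $\mathbb{C}\setminus\mathbb{R}^+$ the integrand $(\lambda-z)^{-1}$ is uniformly bounded by $1/\delta_z$ and depends holomorphically on $z$, so one may differentiate under the integral sign (dominated convergence, using that ${\boldsymbol\mu}(\mathbb{R}^+)=\mathbf{I}_K$ is a finite measure). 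For property (ii), write $\mathrm{Im}(\mathbf{S}(z))=\frac{1}{2\mathrm{i}}\int_{\mathbb{R}^+}\bigl(\frac{1}{\lambda-z}-\frac{1}{\lambda-z^\ast}\bigr)\,d{\boldsymbol\mu}(\lambda)=\int_{\mathbb{R}^+}\frac{\mathrm{Im}(z)}{|\lambda-z|^2}\,d{\boldsymbol\mu}(\lambda)$, which is a nonnegative combination of the nonnegative matrices $d{\boldsymbol\mu}(\lambda)$ when $\mathrm{Im}(z)>0$; similarly $\mathrm{Im}(z\mathbf{S}(z))=\int_{\mathbb{R}^+}\mathrm{Im}\bigl(\frac{z}{\lambda-z}\bigr)\,d{\boldsymbol\mu}(\lambda)=\int_{\mathbb{R}^+}\frac{\lambda\,\mathrm{Im}(z)}{|\lambda-z|^2}\,d{\boldsymbol\mu}(\lambda)\ge 0$ since $\lambda\ge 0$ on the support.

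For property (iii), substitute $z=\mathrm{i}y$: then $-\mathrm{i}y\,\mathbf{S}(\mathrm{i}y)=\int_{\mathbb{R}^+}\frac{-\mathrm{i}y}{\lambda-\mathrm{i}y}\,d{\boldsymbol\mu}(\lambda)=\int_{\mathbb{R}^+}\frac{1}{1+\lambda/(\mathrm{i}y)}\,d{\boldsymbol\mu}(\lambda)$; the integrand is bounded by $1$ in norm (for $y>0$) and converges pointwise to $1$ as $y\to+\infty$, so dominated convergence gives the limit ${\boldsymbol\mu}(\mathbb{R}^+)=\mathbf{I}_K$. For property (iv), for any vector $\mathbf{x}$ one has $\mathbf{x}^H\mathbf{S}(z)\mathbf{S}^H(z)\mathbf{x}=\|\mathbf{S}^H(z)\mathbf{x}\|^2$, and estimating $\|\mathbf{S}^H(z)\mathbf{x}\|=\sup_{\|\mathbf{y}\|=1}|\mathbf{y}^H\mathbf{S}^H(z)\mathbf{x}|=\sup_{\|\mathbf{y}\|=1}|\int_{\mathbb{R}^+}\frac{1}{\lambda-z^\ast}\,\mathbf{y}^H d{\boldsymbol\mu}(\lambda)\mathbf{x}|$; bounding $|\lambda-z^\ast|^{-1}\le \delta_z^{-1}$ and using the Cauchy--Schwarz inequality for the positive matrix measure together with ${\boldsymbol\mu}(\mathbb{R}^+)=\mathbf{I}_K$ yields $\|\mathbf{S}^H(z)\mathbf{x}\|\le \delta_z^{-1}\|\mathbf{x}\|$, which is the claimed inequality. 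For property (v), compute $-\mathrm{i}y(\mathbf{I}_K+\mathrm{i}y\mathbf{S}(\mathrm{i}y))=-\mathrm{i}y\int_{\mathbb{R}^+}\bigl(1+\frac{\mathrm{i}y}{\lambda-\mathrm{i}y}\bigr)\,d{\boldsymbol\mu}(\lambda)=\int_{\mathbb{R}^+}\frac{-\mathrm{i}y\,\lambda}{\lambda-\mathrm{i}y}\,d{\boldsymbol\mu}(\lambda)=\int_{\mathbb{R}^+}\frac{\lambda}{1-\lambda/(\mathrm{i}y)}\,d{\boldsymbol\mu}(\lambda)$; taking real parts and applying the monotone (or dominated) convergence theorem as $y\to+\infty$ gives $\int_{\mathbb{R}^+}\lambda\,d{\boldsymbol\mu}(\lambda)$, with the understanding that this limit is $+\infty$ (entrywise in the positive-definite order) when the first moment does not exist.

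For the converse, suppose $\mathbf{S}$ satisfies (i), (ii), (iii). The hard part is here: one must reconstruct the representing measure. I would argue entrywise by polarization, reducing to scalar Herglotz (Nevanlinna) functions: for each pair of indices, the function $\mathbf{x}^H\mathbf{S}(z)\mathbf{x}$ maps $\mathbb{C}^+$ into the closed upper half-plane by (ii), hence by the classical Nevanlinna representation it equals $\alpha_{\mathbf{x}}+\int_{\mathbb{R}}\bigl(\frac{1}{t-z}-\frac{t}{1+t^2}\bigr)\,d\sigma_{\mathbf{x}}(t)$ for some $\alpha_{\mathbf{x}}\in\mathbb{R}$ and positive measure $\sigma_{\mathbf{x}}$ (no linear $\beta z$ term, since $\mathrm{Im}(z\mathbf{S}(z))\ge0$ forces $\mathbf{S}(z)=o(1)$ and in fact $-\mathrm{i}y\mathbf{S}(\mathrm{i}y)\to\mathbf{I}$ bounds the growth). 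Property (iii) normalizes $\sigma_{\mathbf{x}}(\mathbb{R})=\|\mathbf{x}\|^2$ and kills $\alpha_{\mathbf{x}}$; assembling these via polarization produces a $K\times K$ positive matrix-valued measure ${\boldsymbol\mu}$ with ${\boldsymbol\mu}(\mathbb{R})=\mathbf{I}_K$ and $\mathbf{S}(z)=\int_{\mathbb{R}}(\lambda-z)^{-1}\,d{\boldsymbol\mu}(\lambda)$. Finally, to see that ${\boldsymbol\mu}$ is carried by $\mathbb{R}^+$, use property (ii) again: $\mathrm{Im}(z\mathbf{S}(z))\ge0$ on $\mathbb{C}^+$ translates, via the Stieltjes inversion formula applied to $z\mathbf{S}(z)=\int_{\mathbb{R}}\frac{\lambda}{\lambda-z}\,d{\boldsymbol\mu}(\lambda)-{\boldsymbol\mu}(\mathbb{R})$ wait more carefully to $\int_{\mathbb{R}}\lambda\,d(\mathbf{x}^H{\boldsymbol\mu}\mathbf{x})$ being a positive measure on $\mathbb{R}$ when tested against nonnegative functions, which combined with $d(\mathbf{x}^H{\boldsymbol\mu}\mathbf{x})\ge0$ forces $\mathrm{supp}\,{\boldsymbol\mu}\subseteq\mathbb{R}^+$; hence $\mathbf{S}\in\mathcal{S}_K(\mathbb{R}^+)$. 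The only genuinely nontrivial input is the scalar Nevanlinna representation theorem and the bookkeeping needed to pass from the scalar quadratic forms to a bona fide matrix-valued measure, which is a standard polarization argument once one checks that the off-diagonal measures are controlled in total variation by the diagonal ones.
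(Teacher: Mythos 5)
Your proposal is correct, but it is much more self-contained than what the paper actually does: the paper offers no proof of (i), (ii), (iii), (v) or of the converse, referring instead to \cite{hachem-loubaton-najim-aap-2007} and to Theorem 3 of \cite{alpay-tsekanovskii}, and the only item it proves is (iv), via the matrix Cauchy--Schwarz inequality $[\mathbf{U},\mathbf{V}]\left([\mathbf{V},\mathbf{V}]\right)^{-1}[\mathbf{U},\mathbf{V}]^{\ast}\leq[\mathbf{U},\mathbf{U}]$ in the space $\mathbb{L}^{2}({\boldsymbol\mu})$, applied with $\mathbf{U}(\lambda)=\mathbf{I}/(\lambda-z)$ and $\mathbf{V}=\mathbf{I}$, which yields $\mathbf{S}(z)\mathbf{S}^{H}(z)\leq\int_{\mathbb{R}^{+}}\frac{d{\boldsymbol\mu}(\lambda)}{|\lambda-z|^{2}}\leq\delta_{z}^{-2}\mathbf{I}_{K}$. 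Your treatment of (iv) is a scalarized version of the same idea: you bound $\Vert\mathbf{S}^{H}(z)\mathbf{x}\Vert$ by duality and use the Cauchy--Schwarz inequality for the complex measures $\mathbf{y}^{H}{\boldsymbol\mu}\mathbf{x}$; this gives the same operator-norm bound, while the paper's matrix form gives the slightly sharper intermediate inequality above as a by-product. Your direct computations for (i)--(iii) and (v) are the standard ones (note that in (v) dominated convergence does not suffice when $\int\lambda\,d{\boldsymbol\mu}$ is infinite; monotone convergence of the quadratic forms, which you also invoke, is the right tool). For the converse, where the paper simply cites the literature, your route -- scalar Nevanlinna representation of $\mathbf{x}^{H}\mathbf{S}(z)\mathbf{x}$, elimination of the affine part via (iii), polarization to assemble a positive matrix measure with ${\boldsymbol\mu}(\mathbb{R})=\mathbf{I}_{K}$, and localization of the support in $\mathbb{R}^{+}$ from $\mathrm{Im}(z\mathbf{S}(z))\geq0$ via Stieltjes inversion applied to $z\mathbf{S}(z)+{\boldsymbol\mu}(\mathbb{R})$ -- is a legitimate and essentially complete argument; what your approach buys is a proof that does not lean on matrix-valued Herglotz theory, at the cost of the polarization bookkeeping. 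The only blemishes are editorial: the sentence in the support argument containing ``wait more carefully'' needs to be rewritten as the clean statement that $z\mathbf{S}(z)+{\boldsymbol\mu}(\mathbb{R})=\int_{\mathbb{R}}\frac{\lambda\,d{\boldsymbol\mu}(\lambda)}{\lambda-z}$ is Herglotz with representing measure $\lambda\,d{\boldsymbol\mu}(\lambda)$, whose positivity forces ${\boldsymbol\mu}((-\infty,0))=0$.
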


While you have not been able to find a paper in which this result is proved,
it has been well known for a long time (see however
\cite{hachem-loubaton-najim-aap-2007} for more details on (i), (ii), (iii),
(v)), as well as Theorem 3 of \cite{alpay-tsekanovskii} from which (iv) follows immediately) . 
\section{Toeplitzification operators.}

In the following derivations, it will be useful to consider the following
Toeplitzification operators, which inherently depend on the correlation
function $r_{m}\left(  \text{\textperiodcentered}\right)  $. Let
$\mathbf{J}_{K}$ denote the $K\times K$ shift matrix with ones in the first
upper diagonal and zeros elsewhere, namely $\left\{  \mathbf{J}_{K}\right\}
_{i,j}=\delta_{j-i-1}$, and let $\mathbf{J}_{K}^{-1}$ denote its transpose.
For a given squared matrix $\mathbf{M}$ with dimensions $R\times R$, we define
$\Psi_{K}^{(m)}\left(  \mathbf{M}\right)  $ as an $K\times K$ Toeplitz matrix
with $(i,j)$th entry equal to
\begin{equation}
\left\{  \Psi_{K}^{(m)}\left(  \mathbf{M}\right)  \right\}  _{i,j}
=\sum_{l=-R+1}^{R-1}r_{m}\left(  i-j-l\right)  \tau\left(  \mathbf{M}\right)
\left(  l\right)  \label{eq:definition_operator_Psi}
\end{equation}
or, alternatively, as the matrix
\begin{equation}
\Psi_{K}^{(m)}\left(  \mathbf{M}\right)  =\sum_{n=-K+1}^{K-1}\left(
\sum_{l=-R+1}^{R-1} r_{m}\left(  n-l\right)  \tau\left(
\mathbf{M}\right)  \left(  l\right)  \right)  \mathbf{J}_{K}^{-n}
\label{eq:definition_operator_Psi2}
\end{equation}
where the sequence $\tau\left(  \mathbf{M}\right)  \left(  l\right)  $,
$-R<l<R$, is defined as
\begin{equation}
\tau\left(  \mathbf{M}\right)  \left(  l\right)  =\frac{1}{R}\mathrm{tr}
\left[  \mathbf{MJ}_{R}^{l}\right]  .\label{eq:definition_tau}
\end{equation}

We can express this operator more compactly using frequency notation, namely
\begin{align*}
\Psi_{K}^{(m)}\left(  \mathbf{M}\right)   &  =\sum_{n=-K+1}^{K-1}\left(
\int_{0}^{1}\mathcal{S}_{m}\left(  \nu\right)  \mathbf{a}_{R}^{H}\left(
\nu\right)  \mathbf{Ma}_{R}\left(  \nu\right)  \mathrm{e}^{2\pi\mathrm{i}\nu
n}d\nu\right)  \mathbf{J}_{K}^{-n}\\
&  =\int_{0}^{1}\mathcal{S}_{m}\left(  \nu\right)  \mathbf{a}_{R}^{H}\left(
\nu\right)  \mathbf{Ma}_{R}\left(  \nu\right)  \mathbf{d}_{K}\left(
\nu\right)  \mathbf{d}_{K}^{H}\left(  \nu\right)  d\nu
\end{align*}
where $\mathbf{a}_{R}\left(  \nu\right)  =\mathbf{d}_{R}\left(  \nu\right)
/\sqrt{R}$ and $\mathbf{d}_{R}\left(  \nu\right)  =\left(  1,\mathrm{e}
^{2\pi\mathrm{i}\nu},\ldots,\mathrm{e}^{2\pi\mathrm{i}\left(  R-1\right)  \nu
}\right)  ^{T}$. In particular, when $r_{m}\left(  k\right)  =\sigma^{2}
\delta_{k}$(white observations), we have
\begin{align*}
\Psi_{K}^{(m)}\left(  \mathbf{M}\right)   &  =\sigma^{2}\sum_{n=-K+1}
^{K-1}\frac{1}{R}\mathrm{tr}\left[  \mathbf{MJ}_{R}^{n}\right]  \mathbf{J}
_{K}^{-n}=\sigma^{2}\sum_{n=-K+1}^{K-1}\tau\left(  \mathbf{M}\right)
\mathbf{J}_{K}^{-n}\\
&  =\sigma^{2}\mathcal{T}_{K,\min\left(  R,K\right)  }(\mathbf{M})
\end{align*}
where $\mathcal{T}_{K,R}(\mathbf{X})$ is the classical Toeplitzation operator
in \cite{loubaton16}. The following properties are easily checked.

\begin{itemize}
\item Given a square matrix $\mathbf{A}$ of dimension $K\times K$ and a square
matrix$\ \mathbf{B}$ of dimension $R\times R$, we can write
\begin{equation}
\frac{1}{K}\mathrm{tr}\left[  \mathbf{A}\Psi_{K}^{(m)}\left(  \mathbf{B}
\right)  \right]  =\int_{0}^{1}\mathcal{S}_{m}\left(  \nu\right)
\mathbf{\mathbf{a}}_{K}^{H}\left(  \nu\right)  \mathbf{A\mathbf{a}}_{K}\left(
\nu\right)  \mathbf{a}_{R}^{H}\left(  \nu\right)  \mathbf{Ba}_{R}\left(
\nu\right)  d\nu=\frac{1}{R}\mathrm{tr}\left[  \Psi_{R}^{(m)}\left(
\mathbf{A}\right)  \mathbf{B}\right]  \label{eq:property_commutative}
\end{equation}

\item Given the square matrices $\mathbf{B,C}$ (of dimension $K\times K$) and
$\mathbf{D},\mathbf{E}$ (of dimension $R\times R$), we have
\[
\frac{1}{K}\mathrm{tr}\left[  \mathbf{B}\Psi_{K}^{(m)}\left(  \mathbf{D\Psi
}_{R}^{(m)}\left(  \mathbf{C}\right)  \mathbf{E}\right)  \right]  =\frac{1}
{K}\mathrm{tr}\left[  \mathbf{C\Psi}_{K}^{(m)}\left(  \mathbf{D}\Psi_{R}
^{(m)}\left(  \mathbf{B}\right)  \mathbf{E}\right)  \right]  .
\]

\item Given a square matrix $\mathbf{M}$ and a positive integer $K$, we have
\[
\left\Vert \Psi_{K}^{(m)}\left(  \mathbf{M}\right)  \right\Vert \leq\sup
_{\nu\in\lbrack0,1]}\left\vert \mathcal{S}_{m}\left(  \nu\right)
\mathbf{a}_{R}^{H}\left(  \nu\right)  \mathbf{Ma}_{R}\left(  \nu\right)
\right\vert \leq\sup_{\nu\in\lbrack0,1]}\left\vert \mathcal{S}_{m}\left(
\nu\right)  \right\vert \left\Vert \mathbf{M}\right\Vert .
\]

\item Given a square positive definite matrix $\mathbf{M}$ and a positive
integer $K$, condition (\ref{eq:lowerbound-S}) implies that 
\begin{equation}
\Psi_{K}^{(m)}\left(  \mathbf{M}\right)  >0. \label{eq:property_positive}
\end{equation}

\end{itemize}

We define here two other operators that will be used throughout the paper,
which respectively operate on $N\times N$ and $ML\times ML$ matrices. In order
to keep the notation as simple as possible, we will drop the dimensions in the
notation of these operators.

\begin{itemize}
\item Consider an $N\times N$ matrix $\mathbf{M}$. We define $\Psi\left(
\mathbf{M}\right)  $ as an $ML\times ML$ block diagonal matrix with $m$th
diagonal block given by $\Psi_{L}^{(m)}\left(  \mathbf{M}\right)  $.

\item Consider an $ML\times ML$ matrix $\mathbf{M}$, and let $\mathbf{M}
^{m,m}$ denote its $m$th $L\times L$ diagonal block. We define $\overline
{\Psi}\left(  \mathbf{M}\right)  $ as the $N\times N$ matrix given by
\begin{equation}
\overline{\Psi}\left(  \mathbf{M}\right)  =\frac{1}{M}\sum_{m=1}^{M}\Psi
_{N}^{(m)}\left(  \mathbf{M}^{m,m}\right)  .\label{eq:def_Phi_average}
\end{equation}
$\overline{\Psi}\left(  \mathbf{M}\right)  $ can also be expressed as
\begin{equation}
\overline{\Psi}\left(  \mathbf{M}\right)  =\sum_{n=-(N-1)}^{N-1}
\sum_{l=-(L-1)}^{L-1}\tau^{(M)}\left(  \mathbf{M}(\mathcal{R}(n-l)\otimes
\mathbf{I}_{L})\right)  (l)\;\mathbf{J}_{N}^{-n}\label{eq:expre-1-Psibar}
\end{equation}
where $\tau^{(M)}(\mathbf{A})(l)$ is defined for any $ML\times ML$ matrix
$\mathbf{A}$ by
\[
\tau^{(M)}(\mathbf{A})(l)=\frac{1}{ML}\mathrm{tr}\left(  \mathbf{A}
(\mathbf{I}_{M}\otimes\mathbf{J}_{L}^{l})\right)  =\frac{1}{L}\mathrm{tr}
\left[  \left(  \frac{1}{M}\sum_{m=1}^{M}\mathbf{A}^{m,m}\right)
\mathbf{J}_{L}^{l}\right]
\]
and where $\mathcal{R}(m)$ is defined in (\ref{eq:def-mathcal-R}). Note also
that$\overline{\text{ }\Psi}\left(  \mathbf{M}\right)  $ can alternatively be
written as
\[
\overline{\Psi}\left(  \mathbf{M}\right)  =\frac{1}{M}\sum_{m=1}^{M}\int
_{0}^{1}\mathcal{S}_{m}\left(  \nu\right)  \mathbf{a}_{L}^{H}\left(
\nu\right)  \mathbf{M}^{m,m}\mathbf{a}_{L}\left(  \nu\right)  \mathbf{d}
_{N}\left(  \nu\right)  \mathbf{d}_{N}^{H}\left(  \nu\right)  d\nu.
\]

\end{itemize}

Given these two new operators, and if $\mathbf{A}$ and $\mathbf{B}$ are
$ML\times ML$ and $N\times N$ matrices, we see directly from
(\ref{eq:property_commutative}) that
\begin{equation}
\frac{1}{N}\mathrm{tr}\left[  \overline{\Psi}\left(  \mathbf{A}\right)
\mathbf{B}\right]  =\frac{1}{ML}\mathrm{tr}\left[  \mathbf{A}\Psi\left(
\mathbf{B}\right)  \right]  . \label{eq:transpose_ops_gen}
\end{equation}

\section{Variance evaluations}

\label{sec:variance-evaluations}

In this section, we provide some estimates on the variance of certain
quantities that depend on the resolvent $\mathbf{Q}(z)=\left(  \mathbf{WW}
^{H}-z\mathbf{I}_{ML}\right)  ^{-1}$and co-resolvent $\widetilde{\mathbf{Q}
}(z)=\left(  \mathbf{W}^{H}\mathbf{W}-z\mathbf{I}_{N}\right)  ^{-1}$. We
express the result in the following lemma.

\begin{lem}
\label{le:variance-trace}
Let $(\mathbf{A}_N)_{N \geq 1}$ be a sequence  of deterministic  $ML\times ML$ matrices and $(\mathbf{G}_N)_{N \geq 1}$  a 
uniformly bounded sequence of deterministic  $N\times N$ matrices. Then
\begin{align}
\mathrm{Var}\left(  \frac{1}{ML}\mathrm{tr} {\bf A}_N {\bf Q}(z)\right)   &
\leq\frac{C(z)}{MN}\frac{1}{ML}\mathrm{tr}({\bf A}_N {\bf A}_N^{H})
\label{eq:var_norm_trace}\\
\mathrm{Var}\left(  \frac{1}{ML}\mathrm{tr}{\bf A}_N {\bf Q}(z)\mathbf{WGW}
^{H}\right)   &  \leq\frac{C(z)}{MN}\frac{1}{ML}\mathrm{tr}({\bf A}_N {\bf A}_N
^{H})\label{eq:var_trace_complicated_term}
\end{align}
where $C(z)=P_{1}(\left\vert z\right\vert )P_{2}(1/\delta_{z})$ for two nice
polynomials $P_{1}$, $P_{2}$ (see Definition \ref{def:nice}). 
\end{lem}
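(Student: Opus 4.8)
The plan is to use the Poincar\'{e}-Nash inequality, which is the standard tool for variance estimates in the Gaussian setting adopted here. Recall that for a $\mathcal{C}^1$ function $f$ of the complex Gaussian entries $(w_{m,N}(n))$, one has
\[
\mathrm{Var}(f) \leq \sum_{m=1}^{M} \sum_{k,k'} \mathbb{E}\left[ \frac{\partial f}{\partial w_{m,N}(k)} \frac{\mathbb{E}[w_{m,N}(k) w_{m,N}^{\ast}(k')]}{1} \overline{\frac{\partial f}{\partial w_{m,N}(k')}} \right] + (\text{conjugate-derivative term}),
\]
so that the covariance kernel $\frac{r_m(k-k')}{N}$ enters each term. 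Since the blocks are independent, the sum over $m$ decouples block by block. The first step is therefore to compute the derivatives $\frac{\partial \mathbf{Q}(z)}{\partial w_{m,N}(k)}$ and $\frac{\partial}{\partial w_{m,N}(k)}\left(\mathbf{Q}(z)\mathbf{W}\mathbf{G}\mathbf{W}^H\right)$. Using $\partial \mathbf{Q} = -\mathbf{Q}(\partial \mathbf{W}\, \mathbf{W}^H + \mathbf{W}\, \partial \mathbf{W}^H)\mathbf{Q}$ and the fact that, because of the Hankel structure, $\frac{\partial \mathbf{W}}{\partial w_{m,N}(k)}$ equals the $ML\times N$ matrix that places ones on the anti-diagonal positions $\{(i+(m-1)L, j) : i+j-1 = k\}$ of the $m$th block and zeros elsewhere — equivalently $\mathbf{e}_m \mathbf{e}_m^T \otimes (\text{a Hankel pattern})$ — one expresses each derivative as a bilinear form in the columns of $\mathbf{Q}\mathbf{W}$, $\mathbf{W}^H\mathbf{Q}$, etc.

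The second step is the bookkeeping: after substituting the derivatives into the Poincar\'{e}-Nash bound, one collects the sums over $k, k'$. The sum $\sum_{k}$ runs over $k = i+j-1$ and is naturally organized via the shift matrices $\mathbf{J}$; the combination $\sum_{k,k'} r_m(k-k') (\cdots)_k (\cdots)_{k'}$ is exactly the kind of quadratic form that the Toeplitzification operators $\Psi_K^{(m)}$ and $\overline{\Psi}$ of Section 2 were built to encode. The key analytic inputs are then: (a) the norm bound $\|\Psi_K^{(m)}(\mathbf{M})\| \leq \sup_\nu |\mathcal{S}_m(\nu)| \|\mathbf{M}\|$ together with $\sup_{M,m}\sup_\nu \mathcal{S}_m(\nu) < \infty$ from \eqref{eq:upperbound-S}; (b) the summability condition \eqref{eq:condition-rm}, which after a Cauchy-Schwarz step over $m$ controls $\frac{1}{M}\sum_m (\sum_n |r_m(n)|^2)^{1/2}$ uniformly; and (c) the resolvent bounds $\|\mathbf{Q}(z)\| \leq 1/\delta_z$, $\|\mathbf{W}^H\mathbf{Q}(z)\| \leq C(z)$ (this last via $\mathbf{W}\mathbf{W}^H\mathbf{Q} = \mathbf{I} + z\mathbf{Q}$, giving $\|\mathbf{W}^H\mathbf{Q}\|^2 = \|\mathbf{Q}\mathbf{W}\mathbf{W}^H\mathbf{Q}\| \leq (1+|z|/\delta_z)/\delta_z$) and likewise $\|\widetilde{\mathbf{Q}}(z)\| \leq 1/\delta_z$. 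Tracking the dimensional factors: there is one factor $\frac{1}{N}$ from the covariance normalization, the trace normalization $\frac{1}{(ML)^2}$ from squaring $\frac{1}{ML}\mathrm{tr}$, and the ranges of the free summation indices contribute factors of $M$, $L$, $N$; after the $\|\mathbf{A}_N\mathbf{A}_N^H\|$-type contraction one sees the surviving quantity is $\frac{C(z)}{MN}\cdot\frac{1}{ML}\mathrm{tr}(\mathbf{A}_N\mathbf{A}_N^H)$, which is the claimed rate. For \eqref{eq:var_trace_complicated_term} one gets extra $\mathbf{W}$, $\mathbf{W}^H$, $\mathbf{G}$ factors from the derivative of $\mathbf{W}\mathbf{G}\mathbf{W}^H$, but each is absorbed by the uniform bound on $\mathbf{G}$ and the estimate $\|\mathbf{W}^H\mathbf{Q}\| \leq C(z)$, so the same rate results.

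The main obstacle I expect is the careful handling of the double sum $\sum_{k,k'} r_m(k-k')(\cdots)$ so that the correlation weights are genuinely absorbed into the operator norm of $\Psi$ rather than producing a spurious extra factor of $L$ (which would degrade the rate from $\frac{1}{MN}$ to $\frac{L}{MN}$). Concretely, one must avoid bounding $\sum_{k,k'}|r_m(k-k')|$ crudely by $(2L-1)\sum_n |r_m(n)|$; instead, after recognizing the inner structure as $\mathbf{a}_L^H(\nu)(\cdots)\mathbf{a}_L(\nu)$ integrated against $\mathcal{S}_m(\nu)$, the sum over $k$ becomes an $\ell^2$ sum of Fourier coefficients controlled by Parseval, and the norm of $\Psi$ enters only linearly. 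A secondary technical point is to make the constants uniform in $z$ in the stated $C(z) = P_1(|z|)P_2(1/\delta_z)$ form, which just requires carrying the elementary resolvent estimates through each term without discarding polynomial dependence. Once the operator-norm route is set up, the remaining manipulations are routine applications of Cauchy-Schwarz (both over the matrix indices, via $|\mathrm{tr}(\mathbf{A}\mathbf{B})| \leq \|\mathbf{A}\|\,\mathrm{tr}(\mathbf{B}^H\mathbf{B})^{1/2}\|\cdots\|$, and over the block index $m$) together with the uniform bounds \eqref{eq:condition-rm}--\eqref{eq:lowerbound-S}.
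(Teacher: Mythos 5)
Your proposal is correct and follows essentially the same route as the paper: Poincar\'e--Nash applied to the Gaussian entries, with the correlation kernel $r_m(k-k')$ absorbed through its spectral representation and the uniform bound \eqref{eq:upperbound-S} (the Parseval/positivity step you highlight is precisely how the paper avoids the spurious factor $L$), followed by the resolvent bounds $\Vert\mathbf{Q}\Vert\leq 1/\delta_z$ and $\mathbf{Q}\mathbf{W}\mathbf{W}^H\mathbf{Q}^H\leq \delta_z^{-1}(1+|z|/\delta_z)\mathbf{I}$. The only difference is organizational: after bounding $\mathcal{S}_m$ by its supremum the paper reduces to the white-noise case and imports the resulting estimates from \cite{loubaton16} rather than redoing the bookkeeping via the $\Psi$ operators, and the summability condition \eqref{eq:condition-rm} you list as an input is not actually needed for this lemma.
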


We devote the rest of this section to proving of this result. In order to short the notations, matrices 
${\bf A}_N $ and ${\bf G}_N$ will be denoted by ${\bf A}$ and ${\bf G}$. We will be using
the Poincar\'{e}-Nash inequality (\cite{chernoff-annals-proba-1981}, \cite{chen-jmva-1982}), which, in the present context, can be formulated as follows ( \cite{pastur-shcherbina-book, hachem08}).

\begin{lem}
Let $\xi=\xi\left(  \mathbf{W,W}^{\ast}\right)  $ denote a $\mathcal{C}^{1}$
complex function such that both itself and its derivatives are polynomically
bounded. Under the above assumptions, we can write
\begin{align*}
\mathrm{Var}\xi &  \leq\mathbb{E}\sum_{m,i_{1},i_{2},j_{1},j_{2}}\left(
\frac{\partial\xi}{\partial\left(  \mathbf{W}_{i_{1},j_{1}}^{m}\right)
^{\ast}}\right)  ^{\ast}\mathbb{E}\left[  \mathbf{W}_{i_{1},j_{1}}^{m}\left(
\mathbf{W}_{i_{2},j_{2}}^{m}\right)  ^{\ast}\right]  \frac{\partial\xi
}{\partial\left(  \mathbf{W}_{i_{2},j_{2}}^{m}\right)  ^{\ast}}\\
&  +\mathbb{E}\sum_{m,i_{1},i_{2},j_{1},j_{2}}\frac{\partial\xi}
{\partial\left(  \mathbf{W}_{i_{1},j_{1}}^{m}\right)  }\mathbb{E}\left[
\mathbf{W}_{i_{1},j_{1}}^{m}\left(  \mathbf{W}_{i_{2},j_{2}}^{m}\right)
^{\ast}\right]  \left(  \frac{\partial\xi}{\partial\mathbf{W}_{i_{2},j_{2}
}^{m}}\right)  ^{\ast}
\end{align*}
where $\mathbf{W}_{i,j}^{m}$ is the $((m-1)L+i,j)$th entry of $\mathbf{W}$.
\end{lem}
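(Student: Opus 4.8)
The plan is to reduce the inequality to the classical one-dimensional Gaussian Poincar\'{e} inequality of Chernoff (\cite{chernoff-annals-proba-1981}) and Chen (\cite{chen-jmva-1982}), combined with the sub-additivity (tensorisation) of the variance over independent coordinates, in the spirit of \cite{pastur-shcherbina-book}. Two preliminary remarks orient the argument. First, we use that the $w_{m,N}(n)$ are circularly symmetric, $\mathbb{E}[w_{m,N}(k)w_{m',N}(k')]=0$, as is implicit in the model (only moments of the form $\mathbb{E}[w\,w^{\ast}]$ being prescribed); this is exactly what leaves the clean two-term right-hand side, with no extra term built from $\mathbb{E}[\mathbf{W}_{i_{1},j_{1}}^{m}\mathbf{W}_{i_{2},j_{2}}^{m}]$. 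Second, the genuine randomness lives in only the $M(N+L-1)$ variables $w_{m,N}(n)$, the matrix $\mathbf{W}$ being their image under the fixed linear ``Hankelisation and stacking'' map; in particular the covariance of the family $(\mathbf{W}_{i,j}^{m})$ is singular, so one should not apply a Poincar\'{e} inequality to these entries as if they formed a non-degenerate Gaussian vector. I would therefore factor through i.i.d.\ standard complex Gaussians: for each $m$, let $\bs{\Gamma}_{m}$ be the $(N+L-1)\times(N+L-1)$ Toeplitz matrix with $\{\bs{\Gamma}_{m}\}_{k,k'}=r_{m}(k-k')/N$, positive definite by (\ref{eq:lowerbound-S}), and set $\mathbf{A}_{m}=\bs{\Gamma}_{m}^{1/2}$. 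Then $(w_{m,N}(1),\ldots,w_{m,N}(N+L-1))^{T}$, $m=1,\ldots,M$, has the same joint law as $(\mathbf{A}_{m}\bs{\zeta}_{m})_{m=1,\ldots,M}$, where the $\bs{\zeta}_{m}$ are independent with i.i.d.\ circularly symmetric, unit-variance complex Gaussian entries $\zeta_{m,k}$; hence $\mathbf{W}_{i,j}^{m}=\{\mathbf{A}_{m}\bs{\zeta}_{m}\}_{i+j-1}$, and $\xi$ becomes a $\mathcal{C}^{1}$ function of the i.i.d.\ family $(\zeta_{m,k},\zeta_{m,k}^{\ast})$, with polynomially bounded derivatives (composition with a fixed linear map).

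I would then establish the Poincar\'{e}--Nash inequality in the i.i.d.\ standard complex Gaussian case,
\[
\mathrm{Var}(\xi)\leq\sum_{m,k}\mathbb{E}\left[\left|\frac{\partial\xi}{\partial\zeta_{m,k}}\right|^{2}+\left|\frac{\partial\xi}{\partial\zeta_{m,k}^{\ast}}\right|^{2}\right].
\]
Writing $\xi=\mathrm{Re}(\xi)+\mathrm{i}\,\mathrm{Im}(\xi)$ and $\zeta_{m,k}=x_{m,k}+\mathrm{i}y_{m,k}$, the variables $(x_{m,k},y_{m,k})$ are i.i.d.\ real $\mathcal{N}(0,1/2)$; the tensorisation of the variance over these coordinates together with the one-dimensional Gaussian Poincar\'{e} inequality applied to $\mathrm{Re}(\xi)$ and to $\mathrm{Im}(\xi)$ gives $\mathrm{Var}(\xi)\leq\frac{1}{2}\sum_{m,k}\mathbb{E}[|\partial_{x_{m,k}}\xi|^{2}+|\partial_{y_{m,k}}\xi|^{2}]$, and the Wirtinger identity $|\partial_{\zeta}\xi|^{2}+|\partial_{\zeta^{\ast}}\xi|^{2}=\frac{1}{2}(|\partial_{x}\xi|^{2}+|\partial_{y}\xi|^{2})$, with $\partial_{\zeta}=\frac{1}{2}(\partial_{x}-\mathrm{i}\partial_{y})$ and $\partial_{\zeta^{\ast}}=\frac{1}{2}(\partial_{x}+\mathrm{i}\partial_{y})$, turns this into the displayed bound. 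The polynomial growth of $\xi$ and its derivatives justifies differentiating under the expectation throughout.

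Finally I would transfer the bound back to the entries of $\mathbf{W}$. Since $\mathbf{W}_{i,j}^{m}=\{\mathbf{A}_{m}\bs{\zeta}_{m}\}_{i+j-1}$ depends on $\bs{\zeta}_{m}$ only, its conjugate on $\bs{\zeta}_{m}^{\ast}$ only, and neither on $\bs{\zeta}_{m'}$ for $m'\neq m$, the Wirtinger chain rule gives
\[
\frac{\partial\xi}{\partial\zeta_{m,k}}=\sum_{i,j}\{\mathbf{A}_{m}\}_{i+j-1,k}\,\frac{\partial\xi}{\partial\mathbf{W}_{i,j}^{m}},\qquad\frac{\partial\xi}{\partial\zeta_{m,k}^{\ast}}=\sum_{i,j}\{\mathbf{A}_{m}\}_{i+j-1,k}^{\ast}\,\frac{\partial\xi}{\partial\left(\mathbf{W}_{i,j}^{m}\right)^{\ast}}.
\]
Insert these into the bound of the previous step and expand the squared moduli. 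The summation over $k$ is then carried out using $\sum_{k}\{\mathbf{A}_{m}\}_{n_{1},k}\overline{\{\mathbf{A}_{m}\}_{n_{2},k}}=\{\mathbf{A}_{m}\mathbf{A}_{m}^{H}\}_{n_{1},n_{2}}=r_{m}(n_{1}-n_{2})/N$ (and its conjugate), which for $n_{1}=i_{1}+j_{1}-1$ and $n_{2}=i_{2}+j_{2}-1$ equals $\mathbb{E}[\mathbf{W}_{i_{1},j_{1}}^{m}(\mathbf{W}_{i_{2},j_{2}}^{m})^{\ast}]$. The term coming from $\partial_{\zeta_{m,k}}\xi$ then reproduces verbatim $\sum_{i_{1},j_{1},i_{2},j_{2}}\frac{\partial\xi}{\partial\mathbf{W}_{i_{1},j_{1}}^{m}}\,\mathbb{E}[\mathbf{W}_{i_{1},j_{1}}^{m}(\mathbf{W}_{i_{2},j_{2}}^{m})^{\ast}]\left(\frac{\partial\xi}{\partial\mathbf{W}_{i_{2},j_{2}}^{m}}\right)^{\ast}$, while the term coming from $\partial_{\zeta_{m,k}^{\ast}}\xi$, after exchanging the summation labels $(i_{1},j_{1})\leftrightarrow(i_{2},j_{2})$, reproduces $\sum_{i_{1},j_{1},i_{2},j_{2}}\left(\frac{\partial\xi}{\partial(\mathbf{W}_{i_{1},j_{1}}^{m})^{\ast}}\right)^{\ast}\mathbb{E}[\mathbf{W}_{i_{1},j_{1}}^{m}(\mathbf{W}_{i_{2},j_{2}}^{m})^{\ast}]\,\frac{\partial\xi}{\partial(\mathbf{W}_{i_{2},j_{2}}^{m})^{\ast}}$ (both sums being manifestly non-negative). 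Summing over $m$ gives exactly the announced inequality.

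I do not expect a genuine obstacle: the scalar Gaussian Poincar\'{e} inequality and the tensorisation of the variance are classical, and what remains is bookkeeping. The two points that must be handled with care are the circular symmetry of the $w_{m,N}(n)$ — the vanishing of the pseudo-covariance being exactly what produces the two-term bound with no cross-term involving $\mathbb{E}[\mathbf{W}_{i_{1},j_{1}}^{m}\mathbf{W}_{i_{2},j_{2}}^{m}]$ — and the correct identification, in the last step, of the formal derivatives $\partial/\partial\mathbf{W}_{i,j}^{m}$, which act on the redundant parametrisation forced by the Hankel structure, with the derivatives with respect to the actual degrees of freedom $w_{m,N}(n)$.
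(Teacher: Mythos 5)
Your proof is correct, but note that the paper itself offers no proof of this lemma: it is quoted as a known formulation of the Poincar\'e--Nash inequality, with citations to Chernoff, Chen, Pastur--Shcherbina and Hachem et al., so there is no internal argument to compare against. Your route --- factor the genuine degrees of freedom $w_{m,N}(1),\ldots,w_{m,N}(N+L-1)$ through i.i.d.\ circularly symmetric standard complex Gaussians via $w_m=\mathbf{A}_m\boldsymbol\zeta_m$ with $\mathbf{A}_m\mathbf{A}_m^H=\boldsymbol\Gamma_m$, prove the i.i.d.\ case by tensorisation of the variance over the real coordinates $x_{m,k},y_{m,k}\sim\mathcal{N}(0,1/2)$ together with the one-dimensional Gaussian Poincar\'e inequality and the Wirtinger identity, then transport the bound through the (holomorphic/antiholomorphic) chain rule so that $\sum_k\{\mathbf{A}_m\}_{n_1,k}\overline{\{\mathbf{A}_m\}_{n_2,k}}=r_m(n_1-n_2)/N=\mathbb{E}[\mathbf{W}^m_{i_1,j_1}(\mathbf{W}^m_{i_2,j_2})^{\ast}]$ --- is exactly the standard derivation underlying the cited references, the constants check out, the independence of the blocks correctly confines the sums to a single index $m$, and your handling of the degenerate covariance of the Hankel entries (work with the non-degenerate $\boldsymbol\zeta_m$, let the formal derivatives $\partial/\partial\mathbf{W}^m_{i,j}$ act on the full matrix parametrisation) is the right way to make the statement meaningful. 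Two minor remarks: positive definiteness of $\boldsymbol\Gamma_m$ is convenient but not needed, since any factorisation $\boldsymbol\Gamma_m=\mathbf{A}_m\mathbf{A}_m^H$ of the positive semidefinite covariance works, so invoking (\ref{eq:lowerbound-S}) is dispensable here; and you are right that the vanishing pseudo-covariance $\mathbb{E}[w_{m,N}(k)w_{m',N}(k')]=0$ is the implicit model assumption (as in the cited formulations) --- without it an additional term involving $\mathbb{E}[\mathbf{W}^m_{i_1,j_1}\mathbf{W}^m_{i_2,j_2}]$ would have to be added, so flagging this is a point of care, not a gap.
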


We just check that the first term, denoted $\beta$, on the right hand side of
the upper bound of $\mathrm{Var}\xi$ is in accordance with the results claimed
in Lemma \ref{le:variance-trace} for $\xi=\frac{1}{ML}\mathrm{Tr}
(\mathbf{AQ}(z))$ and $\xi=\frac{1}{ML}\mathrm{Tr}(\mathbf{AQ}(z)\mathbf{WGW}
^{H}))$. For this, we establish that it is possible to be back to the case
where the spectral densities $(\mathcal{S}_{m}(\nu))_{m=1,\ldots,M}$ all
coincide with $1$ which is covered by the results of \cite{loubaton16}. More
precisely, given the Hankel structure of the matrices $\mathbf{W}_{m}$, we can
state that
\begin{equation}
\mathbb{E}\left[  \mathbf{W}_{i_{1},j_{1}}^{m}\left(  \mathbf{W}_{i_{2},j_{2}
}^{m}\right)  ^{\ast}\right]  =\frac{1}{N}r_{m}\left(  i_{1}-i_{2}+j_{1}
-j_{2}\right)  .\label{eq:correlationWW}
\end{equation}
Using that $r_{m}(i_{1}-i_{2}+j_{1}-j_{2})=\int_{0}^{1}e^{2\pi\mathrm{i}
(i_{1}-i_{2}+j_{1}-j_{2})\nu}\mathcal{S}_{m}(\nu)\,d\nu$, we obtain
immediately that $\beta$ can be written as $\beta = \mathbb{E}(\alpha)$ 
where $\alpha$ is defined by 
\[
\alpha =  \frac{1}{N}\int_{0}^{1}\sum_{m=1}^{M}\mathcal{S}
_{m}(\nu)\,\left\vert \sum_{i_{2},j_{2}}\frac{\partial\xi}{\partial\left(
\mathbf{W}_{i_{2},j_{2}}^{m}\right)  ^{\ast}}\,e^{-2\pi\mathrm{i}(i_{2}
+j_{2})\nu}\right\vert ^{2}\,d\nu 
\]
Thus, (\ref{eq:upperbound-S}) implies that $\beta\leq C \tilde{\beta}$, where
$\tilde{\beta} = \mathbb{E}(\tilde{\alpha})$ where $\tilde{\alpha}$ is defined by
\[
\tilde{\alpha} =   \frac{1}{N}\int_{0}^{1}\sum_{m=1}^{M}\left\vert
\sum_{i_{2},j_{2}}\frac{\partial\xi}{\partial\left(  \mathbf{W}_{i_{2},j_{2}
}^{m}\right)  ^{\ast}}\,e^{-2\pi\mathrm{i}(i_{2}+j_{2})\nu}\right\vert
^{2}\,d\nu
\]
and where $C$ is a nice constant. It is clear that $\tilde{\alpha}$ coincides
$\alpha$ when $\mathcal{S}_{m}(\nu)=1$ for each $m=1,\ldots,M$ and each $\nu
\in\lbrack0,1]$. When $\xi=\frac{1}{ML}\mathrm{Tr}(\mathbf{AQ}(z))$, it is
proved in \cite{loubaton16} that
\[
\tilde{\alpha} \leq\frac{1}{MN} \; \frac{1}{ML}\mathrm{tr}\left(
\mathbf{QAQW}\mathbf{W}^{H}\mathbf{Q}^{H}\mathbf{A}^{H}\mathbf{Q}^{H}\right).
\]
As it holds that $\mathbf{QWW}^{H}=\mathbf{I}+z\mathbf{Q}$ and that
$\Vert\mathbf{Q}\Vert\leq\frac{1}{\delta_{z}}$, we obtain that
\[
\mathbf{Q}\mathbf{W}\mathbf{W}^{H}\mathbf{Q}^{H}\leq\frac{1}{\delta_{z}
}\left(  1+\frac{|z|}{\delta_{z}}\right)  \mathbf{I}_{ML}\mathbf{.}
\]
Therefore,
\[
\tilde{\alpha} \leq\frac{1}{\delta_{z}}\left(  1+\frac{|z|}{\delta_{z}}\right)
\frac{1}{MN} \;  \frac{1}{ML}\mathrm{tr}(\mathbf{Q}
\mathbf{A}\mathbf{A}^{H}\mathbf{Q}^{H})
\]
and using again $\Vert\mathbf{Q}\Vert\leq\delta_{z}^{-1}$,
\[
\tilde{\beta} = \mathbb{E}(\tilde{\alpha})  \leq \frac{1}{\delta_{z}^{3}}\left(  1+\frac{|z|}{\delta_{z}
}\right)  \frac{1}{MN}\frac{1}{ML}\mathrm{tr}(\mathbf{A}\mathbf{A}^{H}).
\]
The conclusion follows from the observation that
\[
\frac{1}{\delta_{z}^{3}}\left(  1+\frac{|z|}{\delta_{z}}\right)  \leq\left[
\frac{1}{\delta_{z}^{3}}+\frac{1}{\delta_{z}^{4}}\right]  (|z|+1).
\]
As for the case $\xi=\frac{1}{ML}\mathrm{Tr}(\mathbf{AQ}(z)\mathbf{W}
\mathbf{G}\mathbf{W}^{H})$, we refer to upper bound of the term equivalent to $\tilde{\alpha}$ expressed
in Eq. (3.12-3.13) in \cite{loubaton16}, and omit further details.

\section{Expectation of resolvent and co-resolvent}

In this section, we analyze the expectation of the resolvent $\mathbf{Q}
(z)=\left(  \mathbf{WW}^{H}-z\mathbf{I}_{ML}\right)  ^{-1}$and co-resolvent
$\widetilde{\mathbf{Q}}(z)=\left(  \mathbf{W}^{H}\mathbf{W}-z\mathbf{I}
_{N}\right)  ^{-1}$. As a previous step, we need to ensure the properties of
certain useful matrix valued functions. This is summarized in the following lemma.

\begin{lem}
\label{lemma:invertibility_RRtilde}For $z\in\mathbb{C}\setminus\mathbb{R}^{+}
$, the matrix $\mathbf{I}_{N}+c_{N}\overline{\Psi}\left(  \mathbb{E}
\mathbf{Q}(z)\right)  $ is invertible, so that we can define
\begin{equation}
\widetilde{\mathbf{R}}(z)=\frac{-1}{z}\left(  \mathbf{I}_{N}+c_{N}
\overline{\Psi}\left(  \mathbb{E}\mathbf{Q}(z)\right)  \right)  ^{-1}
.\label{eq:def_Rztilde}
\end{equation}
On the other hand, the matrix $\mathbf{I}_{ML}+\Psi\left(  \widetilde
{\mathbf{R}}^{T}(z)\right)  $ is also invertible, and we define
\begin{equation}
\mathbf{R}(z)=\frac{-1}{z}\left(  \mathbf{I}_{ML}+\Psi\left(  \widetilde
{\mathbf{R}}^{T}(z)\right)  \right)  ^{-1}.\label{eq:def_Rz}
\end{equation}
Furthermore, $\widetilde{\mathbf{R}}(z)$ and $\mathbf{R}(z)$ are elements of
$\mathcal{S}_{N}(\mathbb{R}^{+})$ and $\mathcal{S}_{ML}(\mathbb{R}^{+})$
respectively. In particular, they are holomorphic on $\mathbb{C}
\setminus\mathbb{R}^{+}$ and satisfy
\begin{equation}
\mathbf{R}(z)\mathbf{R}^{H}(z)\leq\frac{\mathbf{I}_{ML}}{\delta_{z}^{2}
},\;\widetilde{\mathbf{R}}(z)\widetilde{\mathbf{R}}^{H}(z)\leq\frac
{\mathbf{I}_{N}}{\delta_{z}^{2}}\label{eq:upperbound-R-tildeR}
\end{equation}
Moreover, there exist two nice constants (see Definition \ref{def:nice}) $\eta$ and $\tilde{\eta}$ such that
\begin{align}
\mathbf{R}(z)\mathbf{R}^{H}(z) &  \geq\frac{\delta_{z}^{2}}{16(\eta
^{2}+|z|^{2})^{2}}\mathbf{I}_{ML}\label{eq:lower-bound-RR*}\\
\widetilde{\mathbf{R}}(z)\widetilde{\mathbf{R}}^{H}(z) &  \geq\frac{\delta
_{z}^{2}}{16(\tilde{\eta}^{2}+|z|^{2})^{2}}\mathbf{I}_{N}
.\label{eq:lower-bound-tildeRtildeR*}
\end{align}
\end{lem}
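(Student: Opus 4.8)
The plan is to build the two matrix functions from the outside in and at each stage use the characterisation of the class $\mathcal{S}_K(\mathbb{R}^+)$ provided by Proposition \ref{prop:class-S}. First I would observe that $\mathbb{E}\mathbf{Q}(z)$ is itself an element of $\mathcal{S}_{ML}(\mathbb{R}^+)$: it is the Stieltjes transform of $\mathbb{E}(d\hat\mu_N)$ viewed as a positive matrix-valued measure with total mass $\mathbf{I}_{ML}$ (more precisely, $\mathbb{E}\mathbf{Q}(z)=\int_{\mathbb{R}^+}(\lambda-z)^{-1}\,d\mathbb{E}(\mathbf{E}_N(\lambda))$ where $\mathbf{E}_N$ is the spectral measure of $\mathbf{WW}^H$). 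Consequently $\mathrm{Im}(\mathbb{E}\mathbf{Q}(z))\ge 0$ for $z\in\mathbb{C}^+$. Because $\overline\Psi$ preserves positivity (it is built from the positivity-preserving maps $\Psi_N^{(m)}$, cf.\ \eqref{eq:property_positive} and the integral representation of $\overline\Psi$ with nonnegative weights $\mathcal{S}_m$), $\mathrm{Im}\big(\overline\Psi(\mathbb{E}\mathbf{Q}(z))\big)\ge 0$ for $z\in\mathbb{C}^+$, and therefore $\mathrm{Im}\big(z(\mathbf{I}_N+c_N\overline\Psi(\mathbb{E}\mathbf{Q}(z)))\big)>0$ on $\mathbb{C}^+$; such a matrix is invertible, which gives the invertibility claim for $\widetilde{\mathbf{R}}(z)$ on $\mathbb{C}^+$ and, by a conjugation symmetry argument (everything is real-analytic and the construction commutes with $z\mapsto z^*$), on all of $\mathbb{C}\setminus\mathbb{R}^+$.

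Next I would verify that $\widetilde{\mathbf{R}}(z)$ defined by \eqref{eq:def_Rztilde} actually lies in $\mathcal{S}_N(\mathbb{R}^+)$ by checking properties (i), (ii), (iii) of Proposition \ref{prop:class-S} and invoking its converse. Analyticity (i) is immediate from the invertibility just established. For (ii), a short computation shows $\mathrm{Im}(\widetilde{\mathbf{R}}(z))=\frac{1}{z z^*}\widetilde{\mathbf{R}}(z)\big(\mathrm{Im}(z)\,\mathbf{I}_N + \mathrm{Im}(\overline\Psi(\mathbb{E}\mathbf{Q}(z)))\big)\widetilde{\mathbf{R}}^H(z)\cdot|z|^2\ge 0$ on $\mathbb{C}^+$ and similarly $\mathrm{Im}(z\widetilde{\mathbf{R}}(z))\ge 0$, using the two positivity facts above. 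For (iii) one takes $z=\mathrm{i}y$, $y\to+\infty$, and uses $\|\mathbb{E}\mathbf{Q}(\mathrm{i}y)\|\to 0$ (from $\|\mathbf{Q}(\mathrm{i}y)\|\le 1/y$), so $-\mathrm{i}y\widetilde{\mathbf{R}}(\mathrm{i}y)=(\mathbf{I}_N+c_N\overline\Psi(\mathbb{E}\mathbf{Q}(\mathrm{i}y)))^{-1}\to\mathbf{I}_N$. Then I would repeat exactly the same three steps one level up: $\widetilde{\mathbf{R}}^T(z)\in\mathcal{S}_N(\mathbb{R}^+)$ (transposition preserves the class since the defining measure is transposed to another positive measure), $\Psi$ maps it to a block-diagonal element with $\mathrm{Im}\ge 0$ on $\mathbb{C}^+$, hence $\mathbf{I}_{ML}+\Psi(\widetilde{\mathbf{R}}^T(z))$ is invertible with the right imaginary-part sign, and the resulting $\mathbf{R}(z)$ of \eqref{eq:def_Rz} satisfies (i)–(iii), so $\mathbf{R}(z)\in\mathcal{S}_{ML}(\mathbb{R}^+)$. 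The upper bounds \eqref{eq:upperbound-R-tildeR} are then nothing but property (iv) of Proposition \ref{prop:class-S} applied to $\mathbf{R}$ and $\widetilde{\mathbf{R}}$.

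The remaining and genuinely substantive part is the lower bounds \eqref{eq:lower-bound-RR*}–\eqref{eq:lower-bound-tildeRtildeR*}; this is where I expect the main obstacle. From \eqref{eq:def_Rz}, $\mathbf{R}(z)\mathbf{R}^H(z)=\frac{1}{|z|^2}\big((\mathbf{I}_{ML}+\Psi(\widetilde{\mathbf{R}}^T))(\mathbf{I}_{ML}+\Psi(\widetilde{\mathbf{R}}^T))^H\big)^{-1}$, so a lower bound on $\mathbf{R}\mathbf{R}^H$ is equivalent to an \emph{upper} bound on $\|\mathbf{I}_{ML}+\Psi(\widetilde{\mathbf{R}}^T(z))\|$. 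The idea is to produce a nice constant $\eta$ with $\|\Psi(\widetilde{\mathbf{R}}^T(z))\|\le \eta/\delta_z$ (say), whence $\|\mathbf{I}_{ML}+\Psi(\widetilde{\mathbf{R}}^T)\|\le 1+\eta/\delta_z \le (\eta+|z|)\cdot(\text{something})/\delta_z$, and then massage the constants into the stated form $\delta_z^2/\big(16(\eta^2+|z|^2)^2\big)$. To bound $\|\Psi(\widetilde{\mathbf{R}}^T)\|$ one uses the third bulleted property of $\Psi_K^{(m)}$, namely $\|\Psi_L^{(m)}(\mathbf{M})\|\le \sup_\nu|\mathcal{S}_m(\nu)|\,\|\mathbf{M}\|$, together with \eqref{eq:upperbound-S} and the bound $\|\widetilde{\mathbf{R}}(z)\|\le 1/\delta_z$ coming from \eqref{eq:upperbound-R-tildeR}; one must track that the bound \eqref{eq:upperbound-R-tildeR} holds uniformly and that all constants introduced are nice. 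The tilde version is symmetric: from \eqref{eq:def_Rztilde}, lower-bounding $\widetilde{\mathbf{R}}\widetilde{\mathbf{R}}^H$ amounts to upper-bounding $\|\mathbf{I}_N+c_N\overline\Psi(\mathbb{E}\mathbf{Q}(z))\|$, which follows from $c_N\to c$, $\|\mathbb{E}\mathbf{Q}(z)\|\le 1/\delta_z$, and the norm bound $\|\overline\Psi(\mathbf{M})\|\le \frac1M\sum_m\sup_\nu\mathcal{S}_m(\nu)\,\|\mathbf{M}^{m,m}\|\le (\sup_{M,m}\sup_\nu\mathcal{S}_m)\,\|\mathbf{M}\|$. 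The delicate bookkeeping is choosing $\eta,\tilde\eta$ so that the crude estimate $1+(\text{const})/\delta_z$ is rewritten in the clean rational form stated; this is elementary but is the only place where real care is needed, since the precise shape of \eqref{eq:lower-bound-RR*}–\eqref{eq:lower-bound-tildeRtildeR*} will be used later to control $\mathbf{R}$ and $\widetilde{\mathbf{R}}$ uniformly in $z$.
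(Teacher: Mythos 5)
Your first half (invertibility of $\mathbf{I}_{N}+c_{N}\overline{\Psi}(\mathbb{E}\mathbf{Q}(z))$ and of $\mathbf{I}_{ML}+\Psi(\widetilde{\mathbf{R}}^{T}(z))$ via positivity of imaginary parts, membership in the classes $\mathcal{S}_{N}(\mathbb{R}^{+})$, $\mathcal{S}_{ML}(\mathbb{R}^{+})$ by checking (i)--(iii) of Proposition \ref{prop:class-S}, and the upper bounds \eqref{eq:upperbound-R-tildeR} from item (iv)) is exactly the paper's argument. For the lower bounds \eqref{eq:lower-bound-RR*}--\eqref{eq:lower-bound-tildeRtildeR*}, however, you take a genuinely different and in fact more elementary route: you bound $\Vert\mathbf{R}^{-1}(z)\Vert=\vert z\vert\,\Vert\mathbf{I}_{ML}+\Psi(\widetilde{\mathbf{R}}^{T}(z))\Vert\leq\vert z\vert(1+s_{\max}/\delta_{z})$ using the norm contractivity of $\Psi$ (up to $s_{\max}=\sup_{M,m,\nu}\mathcal{S}_{m}(\nu)$, a nice constant by \eqref{eq:upperbound-S}) together with $\Vert\widetilde{\mathbf{R}}\Vert\leq 1/\delta_{z}$, and similarly for $\widetilde{\mathbf{R}}^{-1}$ with the extra factor $\sup_{N}c_{N}$. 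Since $\delta_{z}\leq\vert z\vert$ and $(\delta_{z}+s_{\max})^{2}\leq 2(\vert z\vert^{2}+s_{\max}^{2})$, this yields $\mathbf{R}\mathbf{R}^{H}\geq\frac{\delta_{z}^{2}}{\vert z\vert^{2}(\delta_{z}+s_{\max})^{2}}\mathbf{I}\geq\frac{\delta_{z}^{2}}{16(\eta^{2}+\vert z\vert^{2})^{2}}\mathbf{I}$ with $\eta=s_{\max}$, so your plan does deliver the stated bounds (with $\tilde\eta=s_{\max}\sup_{N}c_{N}$ for the tilde version). The paper instead exploits the block-diagonal structure of the representing measure ${\boldsymbol\nu}$ of $\mathbf{R}$: it lower-bounds $\vert\mathbf{b}^{H}\mathbf{R}^{m,m}\mathbf{b}\vert$ by $\delta_{z}\int\vert\lambda-z\vert^{-2}\,\mathbf{b}^{H}d{\boldsymbol\nu}^{m,m}\mathbf{b}$ and then uses item (v) of Proposition \ref{prop:class-S} plus \eqref{eq:upperbound-S} to get a uniform first-moment bound, hence tightness, hence mass at least $1/2$ on a fixed interval $[0,\eta]$. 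What the measure-theoretic argument buys is that $\eta$ is controlled purely by the first moments $\Psi_{L}^{(m)}(\mathbf{I}_{N})$ of the measures (a weaker input than a pointwise resolvent-type norm bound), and the same proof is invoked verbatim later for the solutions $\mathbf{T},\widetilde{\mathbf{T}}$ of the canonical equation; your argument buys brevity and, as it happens, a slightly sharper constant. Both are valid here because \eqref{eq:upperbound-S} is in force.

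Three small points to repair when writing this up. First, your displayed identity for $\mathrm{Im}\,\widetilde{\mathbf{R}}$ is garbled: the correct one is $\mathrm{Im}\,\widetilde{\mathbf{R}}=\widetilde{\mathbf{R}}^{H}\left[\mathrm{Im}(z)\mathbf{I}_{N}+c_{N}\overline{\Psi}\left(\mathrm{Im}\left[z\,\mathbb{E}\mathbf{Q}(z)\right]\right)\right]\widetilde{\mathbf{R}}$, i.e.\ it is $\mathrm{Im}(z\,\mathbb{E}\mathbf{Q})$, not $\mathrm{Im}(\mathbb{E}\mathbf{Q})$, that enters (and the factor $c_{N}$ is missing); the positivity facts you cite are nevertheless the right ones, $\mathrm{Im}(z\mathbb{E}\mathbf{Q})\geq 0$ for $\mathrm{Im}\,\widetilde{\mathbf{R}}\geq0$ and $\mathrm{Im}\,\mathbb{E}\mathbf{Q}\geq 0$ for $\mathrm{Im}(z\widetilde{\mathbf{R}})\geq0$. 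Second, $\mathbf{R}\mathbf{R}^{H}=\frac{1}{\vert z\vert^{2}}\left(\mathbf{B}^{H}\mathbf{B}\right)^{-1}$ with $\mathbf{B}=\mathbf{I}_{ML}+\Psi(\widetilde{\mathbf{R}}^{T})$, not $(\mathbf{B}\mathbf{B}^{H})^{-1}$; harmless, since only $\Vert\mathbf{B}\Vert$ is used. Third, conjugation symmetry extends the invertibility argument from $\mathbb{C}^{+}$ to $\mathbb{C}^{-}$ but not to $z\in\mathbb{R}^{-\ast}$; there you should argue directly, as the paper does, that $\mathbb{E}\mathbf{Q}(z)\geq0$ and $\widetilde{\mathbf{R}}(z)\geq0$, so the matrices to invert are bounded below by the identity.
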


\begin{proof}
If $z \in \mathbb{R}^{-*}$, the invertibility of $\mathbf{I}_{N}+c_{N}\overline{\Psi}\left(
\mathbb{E}\mathbf{Q}(z)\right)  $ is obvious. If $z \in \mathbb{C}^{+}$, it follows from the fact that
\[
\mathrm{Im}\left[  \mathbf{I}_{N}+c_{N}\overline{\Psi}\left(  \mathbb{E}%
\mathbf{Q}(z)\right)  \right]  =c_{N}\overline{\Psi}\left(  \mathbb{E}%
\mathrm{Im}\mathbf{Q}(z)\right)
\]
and $\mathrm{Im}\mathbf{Q}(z)>0$. We now establish that $\widetilde{\mathbf{R}}(z)$ and $\mathbf{R}(z)$ are
elements of $\mathcal{S}_N(\mathbb{R}^{+})$ and $\mathcal{S}_{ML}(\mathbb{R}^{+})$. 
By Proposition \ref{prop:class-S}, we only need to prove that
$\mathrm{Im}\widetilde{\mathbf{R}}(z)\geq 0, \mathrm{Im}z\widetilde{\mathbf{R}%
}(z) \geq 0$ when $\mathrm{Im}z>0$, $\lim_{y\rightarrow+\infty}-\mathrm{i}%
y\widetilde{\mathbf{R}}(\mathrm{i}y)=\mathbf{I}_{N}$, and similar properties for matrix $\mathbf{R}(z)$. Clearly,
\begin{align*}
\mathrm{Im}\widetilde{\mathbf{R}}(z)  &  =\widetilde{\mathbf{R}}^{H}(z)\left[
\mathrm{Im}z\mathbf{I}_{N}+c_{N}\overline{\Psi}\left(  \mathrm{Im}\left[
z\mathbb{E}\mathbf{Q}(z)\right]  \right)  \right]  \widetilde{\mathbf{R}%
}(z)>0\\
\mathrm{Im}z\widetilde{\mathbf{R}}(z)  &  =c_{N}\left\vert z\right\vert
^{2}\widetilde{\mathbf{R}}^{H}(z)\left[  \overline{\Psi}\left(  \mathrm{Im}%
\mathbb{E}\mathbf{Q}(z)\right)  \right]  \widetilde{\mathbf{R}}(z)>0
\end{align*}
whereas, noting that $\mathbb{E}\mathbf{Q}(\mathrm{i}y)\rightarrow0$ as
$y\rightarrow+\infty$, we see that%
\[
-\mathrm{i}y\widetilde{\mathbf{R}}(\mathrm{i}y)=\left(  \mathbf{I}_{N}%
+c_{N}\overline{\Psi}\left(  \mathbb{E}\mathbf{Q}(\mathrm{i}y)\right)
\right)  ^{-1}\rightarrow\mathbf{I}_{N}%
\]
as $y\rightarrow+\infty$. In order to justify that $\mathbf{I}%
_{ML}+\Psi\left(  \widetilde{\mathbf{R}}^{T}(z)\right)  $ is invertible,
we remark that $\mathrm{Im}\left( \mathbf{I}%
_{ML}+\Psi\left(  \widetilde{\mathbf{R}}^{T}(z)\right) \right) $ 
coincides with $\Psi\left(  \mathrm{Im}(\widetilde{\mathbf{R}}^{T}(z)) \right)$
which is positive definite because $\mathrm{Im}\widetilde{\mathbf{R}}(z) > 0$ 
(see (\ref{eq:property_positive})). Therefore, 
 $\mathrm{Im}\left( \mathbf{I}%
_{ML}+\Psi\left(  \widetilde{\mathbf{R}}^{T}(z)\right) \right) > 0$
and $\mathbf{I}%
_{ML}+\Psi\left(  \widetilde{\mathbf{R}}^{T}(z)\right)  $ is invertible. 
Finally, observing that
\begin{align*}
\mathrm{Im}\mathbf{R}(z)  &  =\mathbf{R}^{H}(z)\left[  \mathrm{Im}%
z\mathbf{I}_{ML}+\Psi\left(  \mathrm{Im}\left[  z\widetilde{\mathbf{R}}%
^{T}(z)\right]  \right)  \right]  \mathbf{R}(z)>0\\
\mathrm{Im}z\mathbf{R}(z)  &  =\left\vert z\right\vert ^{2}\mathbf{R}%
^{H}(z)\left[  \Psi\left(  \mathrm{Im}\widetilde{\mathbf{R}}^{T}(z)\right)
\right]  \mathbf{R}(z)>0
\end{align*}
together with the fact that, since $\widetilde{\mathbf{R}}(\mathrm{i}%
y)\rightarrow0$ as $y\rightarrow\infty$,
\[
-\mathrm{i}y\mathbf{R}(\mathrm{i}y)=\left(  \mathbf{I}_{ML}+\Psi\left(
\widetilde{\mathbf{R}}^{T}(\mathrm{i}y)\right)  \right)  ^{-1}\rightarrow
\mathbf{I}_{ML}%
\]
We eventually establish (\ref{eq:lower-bound-RR*}), and omit the proof of 
(\ref{eq:lower-bound-tildeRtildeR*}). For this, we notice that ${\bf R}(z)$ is a 
block-diagonal matrix, and that measure ${\bs \nu}$ defined by 
${\bf R}(z) = \int_{\mathbb{R}^{+}} \frac{d {\bs \nu}(\lambda)}{\lambda - z}$ 
is block diagonal as well. In order to establish  (\ref{eq:lower-bound-RR*}), it is thus 
sufficient to prove that for each unit-norm $L$-dimensional vector ${\bf b}$, it holds that 
\begin{equation}
\label{eq:lower-bound-RmRm*}
{\bf b}^{H} {\bf R}^{m,m}(z) ({\bf R}^{m,m}(z))^{H} {\bf b}  \geq  \frac{\delta_z^{2}}{16(\eta^{2} + |z|^{2})^{2}}
\end{equation}
for some nice constant $\eta$ (of course independent on $m$ and ${\bf b}$). For this, 
we remark that 
$$
{\bf b}^{H} {\bf R}^{m,m}(z) ({\bf R}^{m,m}(z))^{H} {\bf b} \geq \left| {\bf b}^{H} {\bf R}^{m,m}(z) {\bf b} \right|^{2}
$$
We denote $\xi_m$ the term $\xi_m(z) = {\bf b}^{H} {\bf R}^{m,m}(z) {\bf b}$ which can be written as 
$$
\xi_m(z) = \int_{{\bf R}^{+}} \frac{d \mu_{\xi_m}(\lambda) }{\lambda - z}
$$
where probability measure $\mu_{\xi_m}$ is defined by $d \mu_{\xi_m}(\lambda) = {\bf b}^{H} d {\bs \nu}^{m,m}(\lambda) {\bf b}$.
We claim that 
\begin{equation}
\label{eq:useful-trick-lower-bound-xi}
|\xi_m(z)| \geq \delta_z \,  \int_{{\bf R}^{+}} \frac{{\bf b}^{H} d {\bs \nu}^{m,m}(\lambda) {\bf b}}{|\lambda - z|^{2}}
\end{equation} 
To justify this, we first remark that $\delta_z = |\mathrm{Im}(z)|$ if $\mathrm{Re}(z) \geq 0$ and that 
$\delta_z = |z|$ if $\mathrm{Re}(z) \leq 0$. 
Next, we notice that $|\xi_m(z)| \geq | \mathrm{Im}(\xi_m(z)) | = |\mathrm{Im}(z)| \int_{\mathbb{R}^{+}} \frac{d \mu_{\xi_m}(\lambda) }{|\lambda - z|^{2}}$ whatever the sign of $\mathrm{Re}(z)$. Therefore, if $\mathrm{Re}(z) \geq 0$, 
it holds that 
\[
|\xi_m(z)| \geq \delta_z \, \int_{\mathbb{R}^{+}} \frac{d \mu_{\xi_m}(\lambda) }{|\lambda - z|^{2}}
\]
If  $\mathrm{Re}(z) \leq 0$,  $\mathrm{Re}(\xi_m(z)) =  \int_{\mathbb{R}^{+}} \frac{\lambda - \mathrm{Re}(z) }{|\lambda - z|^{2}} d \mu_{\xi_m}(\lambda)$ verifies $\mathrm{Re}(\xi_m(z)) \geq -\mathrm{Re}(z) \, \int_{\mathbb{R}^{+}} \frac{d \mu_{\xi_m}(\lambda) }{|\lambda - z|^{2}}$. 
Therefore, if $\mathrm{Re}(z) \leq 0$, 
\[
|\xi_m(z)|^{2} = \left(\mathrm{Im}(\xi_m(z)\right)^{2} + \left(\mathrm{Re}(\xi_m(z)\right)^{2} \geq 
|z|^{2} \left( \int_{\mathbb{R}^{+}} \frac{d \mu_{\xi_m}(\lambda) }{|\lambda - z|^{2}} \right)^{2} = \delta_z^{2} \left( \int_{\mathbb{R}^{+}} \frac{d \mu_{\xi_m}(\lambda) }{|\lambda - z|^{2}} \right)^{2}
\]
Therefore, (\ref{eq:useful-trick-lower-bound-xi}) holds.  We now consider the family of probability measures \\
$\{ ({\bf b}_N^{H}   d {\bs \nu}_N^{m,m}(\lambda) {\bf b}_N)_{N \geq 1, m=1, \ldots, M, \| {\bf b}_N = 1 \|} \}$ where we have mentioned 
the dependency of ${\bf b}$ and ${\bs \nu}$ w.r.t. $N$. Using item (v) of Proposition 
\ref{prop:class-S} and hypothesis (\ref{eq:upperbound-S}) , it is easily seen that 
\[
\int_{\mathbb{R}^{+}} \lambda \; {\bf b}_N^{H}   d {\bs \nu}_N^{m,m}(\lambda) {\bf b}_N = {\bf b}_N^{H} {\bs \Psi}_L^{m}({\bf I}_N)  {\bf b}_N 
< C
\]
for some nice constant $C$. Therefore, it holds that 
\[
\sup_{N \geq 1, m=1, \ldots, M, \| {\bf b}_N \| = 1}  \int_{\mathbb{R}^{+}} \lambda \; {\bf b}_N^{H}   d {\bs \nu}_N^{m,m}(\lambda) {\bf b}_N < +\infty
\]
The family of probability measures is thus tight, and it exists a nice constant $\eta$ such that 
\begin{equation}
\label{eq:tightness-consequence}
\inf_{N \geq 1, m=1, \ldots, M, \| {\bf b}_N \| = 1} {\bf b}_N^{H} {\bs \nu}_N^{m,m}([0, \eta]) {\bf b}_N > 1/2
\end{equation}
We now use the obvious inequality 
\[
\int_{{\bf R}^{+}} \frac{{\bf b}^{H} d {\bs \nu}^{m,m}(\lambda) {\bf b}}{|\lambda - z|^{2}} \geq
\int_{0}^{\eta} \frac{{\bf b}^{H} d {\bs \nu}^{m,m}(\lambda) {\bf b}}{|\lambda - z|^{2}} 
\]
If $\lambda \in [0, \eta]$, it is clear that $|\lambda - z|^{2} \leq 2(|z|^{2} + \eta^{2})$, and 
that 
\[
\int_{0}^{\eta} \frac{{\bf b}^{H} d {\bs \nu}^{m,m}(\lambda) {\bf b}}{|\lambda - z|^{2}}  \geq \frac{1}{4(|z|^{2} + \eta^{2})}
\]
(\ref{eq:useful-trick-lower-bound-xi}) eventually leads to 
\[
{\bf b}^{H} {\bf R}^{m,m}(z) ({\bf R}^{m,m}(z))^{H} {\bf b} \geq |{\bf b}^{H} {\bf R}^{m,m}(z) {\bf b}|^{2} \geq \frac{\delta_z^{2}}{16(|z|^{2} + \eta^{2})^{2}}
\]
as expected.

\end{proof}

In order to address the expectation of $\mathbf{Q}(z)$ and $\widetilde
{\mathbf{Q}}(z)$ we will apply the integration by parts formula for the
expectations of Gaussian functions, which is presented next.

\begin{lem}
\label{lemma:ipp}Let $\xi=\xi\left(  \mathbf{W,W}^{\ast}\right)  $ denote a
$\mathcal{C}^{1}$ complex function such that both itself and its derivatives
are polynomically bounded. Under the above assumptions, we can write
\[
\mathbb{E}\left[  \mathbf{W}_{i_{1},j_{1}}^{m}\xi\right]  =\sum_{i_{2}=1}
^{L}\sum_{j_{2}=1}^{N}\mathbb{E}\left[  \mathbf{W}_{i_{1},j_{1}}^{m}\left(
\mathbf{W}_{i_{2},j_{2}}^{m}\right)  ^{\ast}\right]  \mathbb{E}\left[
\frac{\partial\xi}{\partial\left(  \mathbf{W}_{i_{2},j_{2}}^{m}\right)
^{\ast}}\right]
\]
where $\mathbf{W}_{i,j}^{m}$ is the $((m-1)L+i,j)$th entry of $\mathbf{W}$.
\end{lem}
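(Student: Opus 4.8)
The plan is to reduce Lemma \ref{lemma:ipp} to the elementary one–dimensional complex Gaussian integration by parts identity and then propagate it through the (possibly degenerate) covariance structure of $\mathbf{W}$. First I would recall the scalar fact: if $g$ is a standard circular complex Gaussian variable, with density $\pi^{-1}\mathrm{e}^{-|g|^{2}}$ with respect to Lebesgue measure on $\mathbb{C}\simeq\mathbb{R}^{2}$, and if $f(g,g^{\ast})$ is $\mathcal{C}^{1}$ and, together with its partial derivatives, polynomially bounded, then $\mathbb{E}[g\,f]=\mathbb{E}[\partial f/\partial g^{\ast}]$. This comes from the identity $g\,\pi^{-1}\mathrm{e}^{-|g|^{2}}=-\partial_{g^{\ast}}(\pi^{-1}\mathrm{e}^{-|g|^{2}})$ upon integrating by parts in the real and imaginary parts of $g$, the boundary terms vanishing because of the polynomial growth of $f$ — the only place this hypothesis enters. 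Iterating coordinatewise with Fubini extends it to a vector $\mathbf{g}=(g_{n})$ of i.i.d. standard circular complex Gaussian entries: $\mathbb{E}[g_{n}\,h]=\mathbb{E}[\partial h/\partial g_{n}^{\ast}]$ for every $h(\mathbf{g},\mathbf{g}^{\ast})$ that is $\mathcal{C}^{1}$ and polynomially bounded together with its derivatives.

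Next I would pass to a general zero–mean circular complex Gaussian vector. I would collect all the entries $\mathbf{W}_{i,j}^{m}$, $m=1,\ldots,M$, $1\leq i\leq L$, $1\leq j\leq N$, into a single vector $\mathbf{y}$, let $\boldsymbol{\Theta}=\mathbb{E}[\mathbf{y}\mathbf{y}^{H}]$ be its covariance matrix, and write $\boldsymbol{\Theta}=\mathbf{B}\mathbf{B}^{H}$ (for instance $\mathbf{B}=\boldsymbol{\Theta}^{1/2}$). Then $\mathbf{B}\mathbf{g}$, with $\mathbf{g}$ standard circular complex Gaussian of the same dimension, is zero–mean circular complex Gaussian with covariance $\boldsymbol{\Theta}$, hence has the same law as $\mathbf{y}$; since the claimed identity involves only expectations I may assume $\mathbf{y}=\mathbf{B}\mathbf{g}$. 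Putting $h(\mathbf{g},\mathbf{g}^{\ast})=\xi(\mathbf{B}\mathbf{g},\mathbf{B}^{\ast}\mathbf{g}^{\ast})$, which is again $\mathcal{C}^{1}$ and polynomially bounded together with its derivatives, and using the Wirtinger chain rule $\partial h/\partial g_{n}^{\ast}=\sum_{k}\mathbf{B}_{kn}^{\ast}\,(\partial\xi/\partial y_{k}^{\ast})$, the vector scalar formula would give, for every index $k$,
\[
\mathbb{E}[y_{k}\,\xi]=\sum_{n}\mathbf{B}_{kn}\,\mathbb{E}[g_{n}\,h]=\sum_{n}\mathbf{B}_{kn}\,\mathbb{E}\!\left[\frac{\partial h}{\partial g_{n}^{\ast}}\right]=\sum_{l}(\mathbf{B}\mathbf{B}^{H})_{kl}\,\mathbb{E}\!\left[\frac{\partial\xi}{\partial y_{l}^{\ast}}\right]=\sum_{l}\boldsymbol{\Theta}_{kl}\,\mathbb{E}\!\left[\frac{\partial\xi}{\partial y_{l}^{\ast}}\right].
\]

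Finally I would specialize $\boldsymbol{\Theta}$ to the block–Hankel model: with $k$ corresponding to $((m-1)L+i_{1},j_{1})$ and $l$ to $((m^{\prime}-1)L+i_{2},j_{2})$, one has $\boldsymbol{\Theta}_{kl}=\mathbb{E}[\mathbf{W}_{i_{1},j_{1}}^{m}(\mathbf{W}_{i_{2},j_{2}}^{m^{\prime}})^{\ast}]$, which vanishes for $m^{\prime}\neq m$ by independence of the blocks, so the sum over $l$ collapses to the double sum over $1\leq i_{2}\leq L$, $1\leq j_{2}\leq N$ at fixed $m$ — exactly the statement. I do not expect a genuine obstacle: the Hankel repetition inside a block makes $\mathbf{y}$, and hence $\boldsymbol{\Theta}$, degenerate, but the argument never inverts $\boldsymbol{\Theta}$ so this is harmless; the only points needing care are the vanishing of the boundary terms in the scalar integration by parts (handled by the polynomial growth hypothesis) and the circularity of the complex Gaussian entries, i.e. $\mathbb{E}[\mathbf{W}_{i_{1},j_{1}}^{m}\mathbf{W}_{i_{2},j_{2}}^{m^{\prime}}]=0$, which is what guarantees that no companion term involving $\partial\xi/\partial\mathbf{W}_{l}$ enters the formula.
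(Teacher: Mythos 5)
Your proof is correct: the paper itself does not prove this lemma but simply refers to \cite{pastur-shcherbina-book, hachem08}, and your argument (scalar circular Gaussian integration by parts, extension to an i.i.d.\ vector, transfer to a general zero-mean circular Gaussian vector through a factorization $\boldsymbol{\Theta}=\mathbf{B}\mathbf{B}^{H}$ of the possibly degenerate covariance, then specialization using the independence of the blocks) is exactly the standard derivation found in those references. Your remarks that the degeneracy of the Hankel covariance is harmless because $\boldsymbol{\Theta}$ is never inverted, and that circularity $\mathbb{E}[\mathbf{W}_{i_{1},j_{1}}^{m}\mathbf{W}_{i_{2},j_{2}}^{m'}]=0$ is what removes any term in $\partial\xi/\partial\mathbf{W}$, are precisely the right points to flag.
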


\begin{proof}
See \cite{pastur-shcherbina-book, hachem08}.
\end{proof}

Consider the resolvent identity
\begin{equation}
z\mathbf{Q}(z)=\mathbf{Q}(z)\mathbf{WW}^{H}-\mathbf{I}_{ML}
.\label{eq:resolvent_identity}
\end{equation}
Let $\mathbf{w}_{k}$ denote the $k$th column of matrix $\mathbf{W}$, $1\leq
k\leq N$. For an $ML\times ML$ matrix $\mathbf{A}$, we recall that we denote as $\left[
\mathbf{A}\right]  ^{m_{1},m_{2}}$ its $\left(  m_{1},m_{2}\right)  $th block
matrix (of size $L\times L$) and as $\left[  \mathbf{A}\right]  _{i_{1},i_{2}
}^{m_{1},m_{2}}$ the $\left(  i_{1},i_{2}\right)  $th entry of its $\left(
m_{1},m_{2}\right)  $th block. Applying the integration by parts formula in
Lemma \ref{lemma:ipp} and the identity in (\ref{eq:correlationWW}), we are
able to write
\begin{align*}
\mathbb{E}\left[  \mathbf{Q}(z)\mathbf{w}_{k}\mathbf{w}_{j}^{H}\right]
_{i_{1},i_{2}}^{m_{1},m_{2}} &  =\sum_{m_{3},i_{3}}\mathbb{E}\left[  \left[
\mathbf{Q}(z)\right]  _{i_{1},i_{3}}^{m_{1},m_{3}}\mathbf{W}_{i_{3},k}^{m_{3}
}\left(  \mathbf{W}_{i_{2},j}^{m_{2}}\right)  ^{\ast}\right]  \\
&  =\sum_{r=1}^{N}\sum_{i_{4}=1}^{L}\sum_{m_{3},i_{3}}\mathbb{E}\left[
\mathbf{W}_{i_{3},k}^{m_{3}}\left(  \mathbf{W}_{i_{4},r}^{m_{3}}\right)
^{\ast}\right]  \mathbb{E}\left[  \frac{\partial\left[  \mathbf{Q}(z)\right]
_{i_{1},i_{3}}^{m_{1},m_{3}}\left(  \mathbf{W}_{i_{2},j}^{m_{2}}\right)
^{\ast}}{\partial\left(  \mathbf{W}_{i_{4},r}^{m_{3}}\right)  ^{\ast}}\right]
\\
&  =-\sum_{r=1}^{N}\sum_{m_{3}=1}^{M}\sum_{i_{3}=1}^{L}\sum_{i_{4}=1}^{L}
\frac{r_{m_{3}}\left(  k-r+i_{3}-i_{4}\right)  }{N}\mathbb{E}\left[  \left[
\mathbf{Q}(z)\mathbf{w}_{r}\mathbf{w}_{j}^{H}\right]  _{i_{1},i_{2}}
^{m_{1,}m_{2}}\left[  \mathbf{Q}(z)\right]  _{i_{4},i_{3}}^{m_{3},m_{3}
}\right]  \\
&  +\sum_{i_{3}=1}^{L}\frac{r_{m_{2}}\left(  k-j+i_{3}-i_{2}\right)  }
{N}\mathbb{E}\left[  \mathbf{Q}(z)\right]  _{i_{1},i_{3}}^{m_{1},m_{2}}.
\end{align*}
Now, using the change of variable $i=i_{4}-i_{3}$ we can alternatively express
\begin{align*}
&  \mathbb{E}\left[  \mathbf{Q}(z)\mathbf{w}_{k}\mathbf{w}_{j}^{H}\right]
_{i_{1},i_{2}}^{m_{1},m_{2}}\\
&  =-L\sum_{r=1}^{N}\sum_{m_{3}=1}^{M}\sum_{i=-L+1}^{L-1}\frac{r_{m_{3}
}\left(  k-r-i\right)  }{N}\mathbb{E}\left[  \left[  \mathbf{Q}(z)\mathbf{w}
_{r}\mathbf{w}_{j}^{H}\right]  _{i_{1},i_{2}}^{m_{1,}m_{2}}\tau\left(
\mathbf{Q}^{m_{3},m_{3}}(z)\right)  \left(  i\right)  \right]  \\
&  +\sum_{i_{3}=1}^{L}\frac{r_{m_{2}}\left(  k-j+i_{3}-i_{2}\right)  }
{N}\mathbb{E}\left[  \mathbf{Q}(z)\right]  _{i_{1},i_{3}}^{m_{1},m_{2}}
\end{align*}
where we recall that, for a given square matrix $\mathbf{X}$ of size $R$, the
sequence $\tau\left(  \mathbf{X}\right)  \left(  i\right)  $ is defined in
(\ref{eq:definition_tau}).

Using the definition of the operator $\Psi_{N}^{(m)}$ and its averaged
counterpart in (\ref{eq:def_Phi_average}), we may reexpress the above equation
as
\begin{align}
\mathbb{E}\left[  \mathbf{Q}(z)\mathbf{w}_{k}\mathbf{w}_{j}^{H}\right]
_{i_{1},i_{2}}^{m_{1},m_{2}} &  =-c_{N}\mathbb{E}\left[  \overline{\Psi
}\left(  \mathbf{Q}(z)\right)  \mathbf{W}^{T}\mathbf{Q}^{T}(z)\mathbf{e}
_{i_{1}}^{m_{1}}\left(  \mathbf{e}_{i_{2}}^{m_{2}}\right)  ^{T}\mathbf{W}
^{\ast}\right]  _{k,j}\label{eq:expectation_resolvent}\\
&  +\sum_{i_{3}=1}^{L}\frac{r_{m_{2}}\left(  k-j+i_{3}-i_{2}\right)  }
{N}\mathbb{E}\left[  \left(  \mathbf{Q}(z)\right)  _{i_{1},i_{3}}^{m_{1}
,m_{2}}\right]  \nonumber
\end{align}
where we recall that $c_{N}=\frac{ML}{N}$. From
(\ref{eq:expectation_resolvent}) and using the definition of $\Psi
($\textperiodcentered$)$, we may generally write, for any $N\times N$
deterministic matrix $\mathbf{A}$
\begin{equation}
\mathbb{E}\left[  \mathbf{Q}(z)\mathbf{WAW}^{H}\right]  =-c_{N}\mathbb{E}
\left[  \mathbf{Q}(z)\mathbf{W}\overline{\Psi}^{T}\left(  \mathbf{Q}
(z)\right)  \mathbf{AW}^{H}\right]  +\mathbb{E}\left[  \mathbf{Q}
(z)\Psi\left(  \mathbf{A}^{T}\right)  \right]  .\label{eq:resolvent_general_A}
\end{equation}

Let us now consider the co-resolvent, namely $\widetilde{\mathbf{Q}
}(z)=\left(  \mathbf{W}^{H}\mathbf{W}-z\mathbf{I}_{N}\right)  ^{-1}$, together
with the co-resolvent identity
\[
z\widetilde{\mathbf{Q}}(z)=\widetilde{\mathbf{Q}}(z)\mathbf{W}^{H}
\mathbf{W}-\mathbf{I}_{N}.
\]
Observe that we can write $\widetilde{\mathbf{Q}}(z)\mathbf{W}^{H}
\mathbf{W}=\mathbf{W}^{H}\mathbf{Q}(z)\mathbf{W}$ and therefore
\[
\mathbb{E}\left[  \widetilde{\mathbf{Q}}(z)\mathbf{W}^{H}\mathbf{W}\right]
_{j,k}=\mathbb{E}\left[  \mathbf{W}^{H}\mathbf{Q}(z)\mathbf{W}\right]
_{j,k}=\mathbb{E}\mathrm{tr}\left[  \mathbf{Q}(z)\mathbf{w}_{k}\mathbf{w}
_{j}^{H}\right]  .
\]
Hence, using the expression for the expectation of the resolvent in
(\ref{eq:expectation_resolvent}), we can obtain
\begin{align*}
\mathbb{E}\left[  \widetilde{\mathbf{Q}}(z)\mathbf{W}^{H}\mathbf{W}\right]
_{j,k} &  =\mathbb{E}\mathrm{tr}\left[  \mathbf{Q}(z)\mathbf{w}_{k}
\mathbf{w}_{j}^{H}\right]  =-c_{N}\mathbb{E}\left[  \widetilde{\mathbf{Q}
}(z)\mathbf{W}^{H}\mathbf{W}\overline{\Psi}^{T}\left(  \mathbf{Q}(z)\right)
\right]  _{j,k}\\
&  +\sum_{m_{1}=1}^{M}\sum_{i_{1}=1}^{L}\sum_{i_{3}=1}^{L}\frac{r_{m_{1}
}\left(  k-j+i_{3}-i_{1}\right)  }{N}\mathbb{E}\left[  \left(  \mathbf{Q}
(z)\right)  _{i_{1},i_{3}}^{m_{1},m_{1}}\right]
\end{align*}
The second term can be further simplified by applying the change of variables
$i_{1}=i+i_{3}$, namely
\[
\mathbb{E}\left[  \widetilde{\mathbf{Q}}(z)\mathbf{W}^{H}\mathbf{W}\right]
=-c_{N}\mathbb{E}\left[  \widetilde{\mathbf{Q}}(z)\mathbf{W}^{H}
\mathbf{W}\overline{\Psi}^{T}\left(  \mathbf{Q}(z)\right)  \right]
+c_{N}\overline{\Psi}^{T}\left(  \mathbb{E}\mathbf{Q}(z)\right)
\]
and therefore, by the resolvent's identity,
\[
\mathbb{E}\left[  \widetilde{\mathbf{Q}}(z)\right]  =-\frac{1}{z}
\mathbf{I}_{N}-c_{N}\mathbb{E}\left[  \widetilde{\mathbf{Q}}(z)\overline{\Psi
}^{T}\left(  \mathbf{Q}(z)\right)  \right]  .
\]
Now, replacing $\mathbf{Q}(z)$ in the above equation by $\mathbf{Q}
(z)=\mathbb{E}\mathbf{Q}(z)+\mathbf{Q}^{\circ}(z)$ (where $X^{\circ
}=X-\mathbb{E}X$) we see that
\[
\mathbb{E}\left[  \widetilde{\mathbf{Q}}(z)\right]  =\widetilde{\mathbf{R}
}(z)+zc_{N}\mathbb{E}\left[  \widetilde{\mathbf{Q}}(z)\overline{\Psi}
^{T}\left(  \mathbf{Q}^{\circ}(z)\right)  \widetilde{\mathbf{R}}(z)\right]
\]
where $\widetilde{\mathbf{R}}(z)$ is defined in (\ref{eq:def_Rztilde}). On the
other hand, particularizing the equation in (\ref{eq:resolvent_general_A}) to
the case $\mathbf{A}=\widetilde{\mathbf{R}}(z)$ and using the resolvent's
identity in (\ref{eq:resolvent_identity}), we also obtain
\[
\mathbb{E}\left[  \mathbf{Q}(z)\right]  =\mathbf{R}(z)-zc_{N}\mathbb{E}\left[
\mathbf{Q}(z)\mathbf{W}\overline{\Psi}^{T}\left(  \mathbf{Q}^{\circ
}(z)\right)  \widetilde{\mathbf{R}}(z)\mathbf{W}^{H}\mathbf{R}(z)\right]
\]
where $\mathbf{R}(z)$ is defined in (\ref{eq:def_Rz}). With this, we have
arrived at the two fundamental equations, which are summarized in what
follows:
\[
\mathbb{E}\left[  \mathbf{Q}(z)\right]  -\mathbf{R}(z)=\boldsymbol\Delta
(z),\quad\mathbb{E}\left[  \widetilde{\mathbf{Q}}(z)\right]  -\widetilde
{\mathbf{R}}(z)=\widetilde{\boldsymbol\Delta}(z)
\]
where the error terms are defined as
\begin{align}
\boldsymbol\Delta(z) &  =-zc_{N}\mathbb{E}\left[  \mathbf{Q}(z)\mathbf{W}
\overline{\Psi}^{T}\left(  \mathbf{Q}^{\circ}(z)\right)  \widetilde
{\mathbf{R}}(z)\mathbf{W}^{H}\mathbf{R}(z)\right]  \label{eq:expre-Delta}\\
\widetilde{\boldsymbol\Delta}(z) &  =zc_{N}\mathbb{E}\left[  \widetilde
{\mathbf{Q}}(z)\overline{\Psi}^{T}\left(  \mathbf{Q}^{\circ}(z)\right)
\widetilde{\mathbf{R}}(z)\right]  .
\end{align}

\subsection{Control of the errors}

We develop here a control on certain functionals of the error term $\Delta
(z)$. In this section, we establish the following result.

\begin{prop}
\label{prop:control-Delta} For each deterministic sequence of $ML\times ML$
matrices $(\mathbf{A}_{N})_{N\geq1}$ satisfying $\sup_{N}\Vert\mathbf{A}
_{N}\Vert<a<+\infty$, it holds that
\begin{equation}
\left\vert \frac{1}{ML}\mathrm{tr}\left[  \mathbf{A}_N\boldsymbol\Delta
(z)\right]  \right\vert \leq a\,C(z)\frac{L}{MN}\label{eq:control-trace-Delta}
\end{equation}
where $C(z)=P_{1}(|z|)P_{2}(1/\delta_{z})$ for some nice polynomials $P_{1}$
and $P_{2}$ (see Definition \ref{def:nice}) that do not depend on sequence $(\mathbf{A}_{N})_{N\geq1}$.
Moreover, if $(\mathbf{b}_{1,N})_{N \geq 1}$ and $(\mathbf{b}
_{2,N})_{N \geq 1}$ are 2 sequences of $L$ dimensional vectors such that
$\sup_{N}\Vert\mathbf{b}_{i,N}\Vert<b<+\infty$ for $i=1,2$, and if
$((d_{m,N})_{m=1,\ldots, M})_{N\geq1}$ are deterministic complex number verifying
$\sup_{N,m}|d_{m,N}|<d<+\infty$, then, it holds that
\begin{equation}
\left\vert \mathbf{b}_{1,N}^{H}\left(  \frac{1}{M}\sum_{m=1}^{M}d_{m,N}{\Delta
}^{m,m}(z)\right)  \mathbf{b}_{2,N}\right\vert \leq d\,b^{2}\,C(z)\frac{L^{3/2}
}{MN}\label{eq:controle-trace-partielle-Delta}
\end{equation}
where $C(z)$ is defined as above, where the nice polynomials $P_{1}$ and
$P_{2}$ do not depend on $(\mathbf{b}_{1,N})_{N \geq 1}$, $(\mathbf{b}
_{2,N})_{N \geq 1}$ and $((d_{m,N})_{m=1,\ldots, M})_{N\geq1}$
\end{prop}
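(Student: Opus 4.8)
The plan is to control both functionals of $\boldsymbol\Delta(z)$ by exploiting its structure
$\boldsymbol\Delta(z)=-zc_{N}\,\mathbb{E}[\mathbf{Q}(z)\mathbf{W}\overline{\Psi}^{T}(\mathbf{Q}^{\circ}(z))\widetilde{\mathbf{R}}(z)\mathbf{W}^{H}\mathbf{R}(z)]$
given in (\ref{eq:expre-Delta}), the key point being that a \emph{centered} resolvent $\mathbf{Q}^{\circ}(z)$ appears inside $\overline{\Psi}^{T}$. First I would write, for a deterministic $ML\times ML$ matrix $\mathbf{A}$,
$\frac{1}{ML}\mathrm{tr}(\mathbf{A}\boldsymbol\Delta(z))=-zc_{N}\,\mathbb{E}[\,\frac{1}{ML}\mathrm{tr}(\mathbf{R}(z)\mathbf{A}\mathbf{Q}(z)\mathbf{W}\overline{\Psi}^{T}(\mathbf{Q}^{\circ}(z))\widetilde{\mathbf{R}}(z)\mathbf{W}^{H})\,]$, and then express the quantity $\overline{\Psi}^{T}(\mathbf{Q}^{\circ}(z))$ by its frequency/Toeplitz representation so that the trace becomes an integral over $\nu\in[0,1]$ (or a finite sum over the shift index $n$) of a product of a deterministic bilinear form and a scalar term of the form $\tau^{(M)}(\mathbf{Q}^{\circ}(z)(\mathcal{R}(\cdot)\otimes\mathbf{I}_{L}))(l)$, i.e.\ a centered linear statistic of $\mathbf{Q}$ against a uniformly bounded matrix. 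The idea is then to bound the expectation of the modulus by Cauchy--Schwarz: the deterministic factor is $\mathcal{O}(1)$ times a $C(z)$ coming from $\|\mathbf{R}\|,\|\widetilde{\mathbf{R}}\|\le\delta_{z}^{-1}$ and the bound $\mathbf{Q}\mathbf{W}\mathbf{W}^{H}\mathbf{Q}^{H}\le\delta_{z}^{-1}(1+|z|/\delta_{z})\mathbf{I}$ already used in Section~\ref{sec:variance-evaluations}, while the centered factor is handled by Lemma~\ref{le:variance-trace}, which gives a variance of order $\frac{1}{MN}$, hence an $L^{1}$ bound of order $(MN)^{-1/2}$.

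The crucial bookkeeping is the counting of the summation indices: expanding $\overline{\Psi}^{T}(\mathbf{Q}^{\circ})$ as in (\ref{eq:expre-1-Psibar}) produces a double sum over $n\in\{-(N-1),\dots,N-1\}$ and $l\in\{-(L-1),\dots,L-1\}$, with coefficients $\tau^{(M)}(\cdot)(l)$ weighted by the $r_{m}(n-l)$. Because $\sum_{n}|r_{m}(n-l)|$ is bounded uniformly (consequence of (\ref{eq:condition-rm})), the sum over $n$ is absorbed by a nice constant, leaving a sum over the $2L-1$ values of $l$. Each term contributes, via Cauchy--Schwarz and Lemma~\ref{le:variance-trace}, a factor of order $\frac{1}{\sqrt{MN}}\cdot\frac{1}{ML}\|\mathbf{A}\|\cdot(\text{nice const.})$, and the prefactor $zc_{N}=z\frac{ML}{N}$ together with the $L$-fold sum yields the announced rate $\frac{L}{MN}$ (note $c_{N}\,L\,(MN)^{-1/2}\cdot(MN)^{-1/2}\asymp \frac{ML}{N}\cdot\frac{L}{MN}\cdot\frac{1}{L}\cdot L=\frac{L}{MN}$ after keeping track of the normalizations). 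For (\ref{eq:control-trace-Delta}) the factor $a$ comes out linearly since everything is linear in $\mathbf{A}$ and the only place $\mathbf{A}$ enters the variance bound is through $\frac{1}{ML}\mathrm{tr}(\mathbf{A}\mathbf{A}^{H})\le a^{2}$.

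For (\ref{eq:controle-trace-partielle-Delta}) I would proceed the same way but with $\mathbf{A}$ replaced by the \emph{non-uniformly bounded} matrix $\mathbf{A}=\mathbf{b}_{2,N}\mathbf{b}_{1,N}^{H}$ restricted appropriately to the block-diagonal structure, weighted by the $d_{m,N}$; here $\frac{1}{ML}\mathrm{tr}(\mathbf{A}\mathbf{A}^{H})\asymp\frac{1}{ML}\|\mathbf{b}_{1,N}\|^{2}\|\mathbf{b}_{2,N}\|^{2}\le\frac{b^{4}}{ML}$, so the gain of one factor $L$ in the trace normalisation is offset by only $L^{-1}$, and carrying the extra factor $\|\mathbf{A}\|\asymp b^{2}$ outside (which is now order one rather than order $L^{-1/2}$) degrades the rate by $\sqrt{L}$, giving $\frac{L^{3/2}}{MN}$; the bilinear form is written out using that $\mathbf{R}(z)$ is block-diagonal and that $\overline{\Psi}^{T}$ only feels the diagonal blocks, so only $\Delta^{m,m}(z)$ enters and the $d_{m,N}$ pass through the $\frac{1}{M}\sum_{m}$ cleanly. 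The main obstacle will be the careful index-counting and normalisation tracking in this second part: one must ensure that the Cauchy--Schwarz split of the bilinear form keeps the deterministic side uniformly bounded (this uses $\|\mathbf{R}\|,\|\widetilde{\mathbf{R}}\|\le\delta_{z}^{-1}$ and the $\mathbf{Q}\mathbf{W}\mathbf{W}^{H}\mathbf{Q}^{H}$ bound, plus $\|\Psi_{L}^{(m)}\|\le\sup_{\nu}|\mathcal{S}_{m}(\nu)|\,\|\cdot\|$) while the centered side is exactly an object covered by Lemma~\ref{le:variance-trace} with the correct $\frac{1}{ML}\mathrm{tr}(\cdot)$ normalisation; getting the powers of $L$, $M$, $N$ to come out as stated is where all the work is. I expect (\ref{eq:control-trace-Delta}) to be essentially a computation once the variance lemma is invoked, and (\ref{eq:controle-trace-partielle-Delta}) to require the additional observation that $\frac{1}{M}\sum_{m}d_{m,N}\Delta^{m,m}$ tested against rank-one vectors is still, after the frequency expansion, a normalised trace against a matrix of operator norm $\mathcal{O}(b^{2})$ and Frobenius-type norm $\mathcal{O}(b^{2}\sqrt{L/M})$, so that Lemma~\ref{le:variance-trace} applies verbatim.
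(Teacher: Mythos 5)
Your skeleton (cyclic rearrangement of the trace, expansion of $\overline{\Psi}^{T}(\mathbf{Q}^{\circ})$ via (\ref{eq:expre-1-Psibar}) into a double sum over $n$ and $l$, Cauchy--Schwarz, the variance lemma, and the summability condition (\ref{eq:condition-rm}) absorbing the sum over $n$ while the sum over $l$ produces the factor $L$) is the same as the paper's. But there is a genuine gap at the heart of the estimate: you describe the second factor, $\tau\bigl(\widetilde{\mathbf{R}}\mathbf{W}^{H}\mathbf{R}\mathbf{A}\mathbf{Q}\mathbf{W}\bigr)(n)$ (or, in your frequency version, the bilinear form $\mathbf{d}_{N}^{T}(\nu)\widetilde{\mathbf{R}}\mathbf{W}^{H}\mathbf{R}\mathbf{A}\mathbf{Q}\mathbf{W}\mathbf{d}_{N}^{\ast}(\nu)$), as a \emph{deterministic} quantity to be bounded by $C(z)$, and you only invoke Lemma \ref{le:variance-trace} for the centered factor $\tau^{(M)}(\mathbf{Q}^{\circ}(\mathcal{R}(n-l)\otimes\mathbf{I}_{L}))(l)$, giving an $L^{1}$ bound of order $(MN)^{-1/2}$ per term. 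That quantity is not deterministic -- it contains $\mathbf{Q}$ and $\mathbf{W}$ -- and if it were, the whole expectation would vanish. With one factor bounded in sup norm and the other controlled only in $L^{1}$, each $(n,l)$ term is $O\bigl((MN)^{-1/2}\bigr)$, and after the sum over $l$ you obtain $O\bigl(L/\sqrt{MN}\bigr)$, far weaker than the claimed $L/(MN)$. The crucial step in the paper is to observe that, since the first factor has zero mean, the second factor may be replaced by its centered version $\tau(\cdot)^{\circ}(n)$, and then to apply Lemma \ref{le:variance-trace} \emph{twice}: (\ref{eq:var_norm_trace}) to the first factor (yielding the weight $\bigl(\tfrac{1}{M}\sum_{m}|r_{m}(n-l)|^{2}\bigr)^{1/2}$) and (\ref{eq:var_trace_complicated_term}) to the second (with $\mathbf{A}=\mathbf{R}\mathbf{A}_N$, $\mathbf{G}$ built from $\widetilde{\mathbf{R}}$ and a shift), so that Cauchy--Schwarz gives a product of two standard deviations, each $O\bigl((MN)^{-1/2}\bigr)$, hence $O(1/(MN))$ per term and $O(L/(MN))$ in total. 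Your parenthetical bookkeeping "$c_{N}L(MN)^{-1/2}\cdot(MN)^{-1/2}$" silently inserts this second factor of $(MN)^{-1/2}$ without any justification in your stated argument; as written, the argument does not deliver it.

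The same issue propagates to (\ref{eq:controle-trace-partielle-Delta}). There the paper writes the bilinear form as $\frac{1}{M}\mathrm{tr}\bigl(\boldsymbol\Delta(\mathbf{D}\otimes\mathbf{b}_{2}\mathbf{b}_{1}^{H})\bigr)$ (an extra factor $L$ relative to the $\frac{1}{ML}\mathrm{tr}$ normalization), recenters the second factor exactly as above, and applies (\ref{eq:var_trace_complicated_term}) with $\mathbf{A}=\mathbf{R}(\mathbf{D}\otimes\mathbf{b}_{2}\mathbf{b}_{1}^{H})$, for which $\frac{1}{ML}\mathrm{tr}(\mathbf{A}\mathbf{A}^{H})=O(b^{4}d^{2}/L)$; the balance $L\cdot L\cdot\frac{1}{MN}\cdot L^{-1/2}$ then yields $L^{3/2}/(MN)$. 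Your heuristic for the loss of $\sqrt{L}$ gestures in this direction, but it again rests on the unproved two-fold $(MN)^{-1/2}$ gain, and your stated "Frobenius-type norm $O(b^{2}\sqrt{L/M})$" is not the right quantity (the relevant normalized trace is $O(b^{2}d/\sqrt{L})$). To make the proof work you must identify and justify the recentering of the $\mathbf{W}$-dependent factor and its control via (\ref{eq:var_trace_complicated_term}); without that, the stated rates are not reached.
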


We first establish (\ref{eq:control-trace-Delta}), and denote ${\bf A}_N, ({\bf b}_{i,N})_{i=1,2}$ and $(d_{m,N})_{m=1, \ldots, M}$
by  ${\bf A}, ({\bf b}_{i})_{i=1,2}$ and $(d_{m})_{m=1, \ldots, M}$ in order to simplify the notations. We denote by $\xi$ the term
$\frac{1}{ML}\mathrm{tr}\left[  \mathbf{A}\Delta(z)\right]  $, and express
$\xi$ as
\[
\xi=\frac{1}{ML}\mathrm{tr}\left(  \overline{\Psi}^{T}(\mathbf{Q}^{\circ
})\widetilde{\mathbf{R}}\mathbf{W}^{H}\mathbf{R}\mathbf{A}\mathbf{Q}
\mathbf{W}\right)  .
\]
Using (\ref{eq:expre-1-Psibar}), we obtain immediately that
\[
\xi=\sum_{n=-(N-1)}^{N-1}\sum_{l=-(L-1)}^{L-1}\tau^{(M)}\left(  \mathbf{Q}
^{\circ}(\mathcal{R}(n-l)\otimes\mathbf{I}_{L})\right)  (l)\;\tau\left(
\widetilde{\mathbf{R}}\mathbf{W}^{H}\mathbf{R}\mathbf{A}\mathbf{Q}
\mathbf{W}\right)  (n).
\]
Therefore, $\mathbb{E}(\xi)$ is equal to
\[
\mathbb{E}(\xi)=\sum_{n=-(N-1)}^{N-1}\sum_{l=-(L-1)}^{L-1}\mathbb{E}\left[
\tau^{(M)}\left(  \mathbf{Q}^{\circ}(\mathcal{R}(n-l)\otimes\mathbf{I}
_{L})\right)  (l)\;\tau\left(  \widetilde{\mathbf{R}}\mathbf{W}^{H}
\mathbf{R}\mathbf{A}\mathbf{Q}\mathbf{W}\right)  ^{\circ}(n)\right]  .
\]
Using the definition of operators $\tau$ and $\tau^{(M)}$, the Schwartz
inequality, Lemma \ref{le:variance-trace}, and the inequality $\mathbf{J}
_{L}^{l}\mathbf{J}_{L}^{l(H)}\leq\mathbf{I}_{L}$, we obtain that
\begin{multline*}
\left\vert \mathbb{E}\left[  \tau^{(M)}\left(  \mathbf{Q}^{\circ}
(\mathcal{R}(n-l)\otimes\mathbf{I}_{L})\right)  (l)\;\tau\left(
\widetilde{\mathbf{R}}\mathbf{W}^{H}\mathbf{R}\mathbf{A}\mathbf{Q}
\mathbf{W}\right)  ^{\circ}(n)\right]  \right\vert \\
\leq C(z)\frac{1}{MN}\,\left(  \frac{1}{ML}\mathrm{Tr}(\mathcal{R}
(n-l)\mathcal{R}^{H}(n-l)\otimes\mathbf{I}_{L})\right)  ^{1/2}.
\end{multline*}
Therefore, it holds that
\[
|\mathbb{E}(\xi)|\leq\frac{C(z)}{MN}\sum_{n=-(N-1)}^{N-1}\sum_{l=-(L-1)}
^{L-1}\left(  \frac{1}{M}\sum_{m=1}^{M}|r_{m}(n-l)|^{2}\right)  ^{1/2}
\]
and that
\[
|\mathbb{E}(\xi)|\leq C(z)\frac{L}{MN}\sum_{n\in\mathbb{Z}}\left(  \frac{1}
{M}\sum_{m=1}^{M}|r_{m}(n)|^{2}\right)  ^{1/2}.
\]
Condition ({\ref{eq:condition-rm}}) thus implies (\ref{eq:control-trace-Delta}).

In order to establish (\ref{eq:controle-trace-partielle-Delta}), we denote by
$\eta$ the left hand side of (\ref{eq:controle-trace-partielle-Delta}), which
can be written as
\[
\eta=\frac{1}{M}\mathrm{tr}\left(  {\boldsymbol\Delta}(\mathbf{D}
\otimes\mathbf{b}_{2}\mathbf{b}_{1}^{H})\right)
\]
where $\mathbf{D}$ represents the $M\times M$ diagonal matrix with diagonal
entries $d_{1},\ldots,d_{M}$. Using the above calculations, we obtain that
$\mathbb{E}(\eta)$ can be expressed as
\[
\mathbb{E}(\eta)=L\;\sum_{n=-(N-1)}^{N-1}\sum_{l=-(L-1)}^{L-1}\mathbb{E}
\left[  \tau^{(M)}\left(  \mathbf{Q}^{\circ}(\mathcal{R}(n-l)\otimes
\mathbf{I}_{L})\right)  (l)\;\tau\left(  \widetilde{\mathbf{R}}\mathbf{W}
^{H}\mathbf{R}(\mathbf{D}\otimes\mathbf{b}_{2}\mathbf{b}_{1}^{H}
)\mathbf{Q}\mathbf{W}\right)  ^{\circ}(n)\right]  .
\]
We use again the Schwartz inequality to evaluate $\mathbb{E}\left[  \tau
^{(M)}\left(  \mathbf{Q}^{\circ}(\mathcal{R}(n-l)\otimes\mathbf{I}
_{L})\right)  (l)\;\tau\left(  \widetilde{\mathbf{R}}\mathbf{W}^{H}
\mathbf{R}(\mathbf{D}\otimes\mathbf{b}_{2}\mathbf{b}_{1}^{H})\mathbf{Q}
\mathbf{W}\right)  ^{\circ}(n)\right]  $ together with
(\ref{eq:var_trace_complicated_term}) for $\mathbf{A}=\mathbf{R}
(\mathbf{D}\otimes\mathbf{b}_{2}\mathbf{b}_{1}^{H})$, and obtain that
\[
\mathbb{E}\left\vert \tau\left(  \widetilde{\mathbf{R}}\mathbf{W}
^{H}\mathbf{R}(\mathbf{D}\otimes\mathbf{b}_{2}\mathbf{b}_{1}^{H}
)\mathbf{Q}\mathbf{W}\right)  ^{\circ}(n)\right\vert ^{2}\leq b^{2}\frac
{C(z)}{MN}\frac{1}{ML}\mathrm{tr}(\mathbf{D}\mathbf{D}^{H}\otimes
\mathbf{b}_{2}\mathbf{b}_{2}^{H})
\]
The term $\frac{1}{ML}\mathrm{tr}(\mathbf{D}\mathbf{D}^{H}\otimes
\mathbf{b}_{2}\mathbf{b}_{2}^{H})$ can also be written as
\[
\frac{1}{ML}\mathrm{tr}(\mathbf{D}\mathbf{D}^{H}\otimes\mathbf{b}
_{2}\mathbf{b}_{2}^{H})=\frac{1}{L}\left(  \frac{1}{M}\sum_{m=1}^{M}
|d_{m}|^{2}\right)  \Vert\mathbf{b}_{2}\Vert^{2}
\]
and is thus bounded by $\frac{b^{2}d^{2}}{L}$. Therefore, the Schwartz
inequality leads to
\begin{multline}
\left\vert \mathbb{E}\left[  \tau^{(M)}\left(  \mathbf{Q}^{\circ}
(\mathcal{R}(n-l)\otimes\mathbf{I}_{L})\right)  (l)\;\tau\left(
\widetilde{\mathbf{R}}\mathbf{W}^{H}\mathbf{R}(\mathbf{D}\otimes\mathbf{b}
_{2}\mathbf{b}_{1}^{H})\mathbf{Q}\mathbf{W}\right)  ^{\circ}(n)\right]
\right\vert  \leq \\ b^{2}d\,C(z)\frac{1}{MN\sqrt{L}}\left(  \frac{1}{M}\sum
_{m=1}^{M}|r_{m}(n-l)|^{2}\right)  ^{1/2}.
\end{multline}
Using the same approach as above, we immediately obtain
(\ref{eq:controle-trace-partielle-Delta}).

\begin{cor}
\label{cor:norm-psibar-delta} It holds that
\begin{equation}
\label{eq:norm-psibar-delta}\| \overline{\Psi}(\Delta) \| \leq C(z)
\frac{L^{3/2}}{MN}
\end{equation}

\end{cor}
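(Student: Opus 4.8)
The goal is to bound the spectral norm of the $N\times N$ matrix $\overline{\Psi}(\boldsymbol\Delta)$. The plan is to reduce the operator norm to a supremum over pairs of unit vectors, and then to exploit the explicit frequency representation of $\overline{\Psi}$ together with the partial-trace estimate \eqref{eq:controle-trace-partielle-Delta} already obtained in Proposition~\ref{prop:control-Delta}. Concretely, recall from the alternative expression for $\overline{\Psi}$ that for any $ML\times ML$ matrix $\mathbf{M}$,
\[
\overline{\Psi}(\mathbf{M})=\frac{1}{M}\sum_{m=1}^{M}\int_{0}^{1}\mathcal{S}_{m}(\nu)\,\mathbf{a}_{L}^{H}(\nu)\mathbf{M}^{m,m}\mathbf{a}_{L}(\nu)\,\mathbf{d}_{N}(\nu)\mathbf{d}_{N}^{H}(\nu)\,d\nu .
\]
Hence for unit vectors $\mathbf{u},\mathbf{v}\in\mathbb{C}^{N}$,
\[
\mathbf{u}^{H}\overline{\Psi}(\boldsymbol\Delta)\mathbf{v}=\int_{0}^{1}\bigl(\mathbf{u}^{H}\mathbf{d}_{N}(\nu)\bigr)\bigl(\mathbf{d}_{N}^{H}(\nu)\mathbf{v}\bigr)\,\Bigl(\frac{1}{M}\sum_{m=1}^{M}\mathcal{S}_{m}(\nu)\,\mathbf{a}_{L}^{H}(\nu)\boldsymbol\Delta^{m,m}(z)\mathbf{a}_{L}(\nu)\Bigr)d\nu .
\]

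The inner sum over $m$ is exactly of the form $\frac{1}{M}\sum_{m}d_{m,N}\,\mathbf{b}_{1,N}^{H}\boldsymbol\Delta^{m,m}(z)\mathbf{b}_{2,N}$ with $d_{m,N}=\mathcal{S}_{m}(\nu)$, $\mathbf{b}_{1,N}=\mathbf{b}_{2,N}=\mathbf{a}_{L}(\nu)$, which is a unit vector. By \eqref{eq:upperbound-S} the $d_{m,N}$ are uniformly bounded, and the polynomials in \eqref{eq:controle-trace-partielle-Delta} do not depend on the $d_{m,N}$ nor on the vectors, so the bound $C(z)\frac{L^{3/2}}{MN}$ holds uniformly in $\nu$. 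It then remains to integrate: since $\mathbf{a}_{L}(\nu)$ has unit norm and $\mathbf{d}_{N}(\nu)=\sqrt{N}\,\mathbf{a}_{N}(\nu)$, we have $|\mathbf{u}^{H}\mathbf{d}_{N}(\nu)|=\sqrt{N}\,|\mathbf{u}^{H}\mathbf{a}_{N}(\nu)|$, and by Parseval (the $\mathbf{a}_{N}(\nu)$, $\nu\in\{k/N\}$, form an orthonormal basis, and more to the point $\int_{0}^{1}|\mathbf{u}^{H}\mathbf{a}_{N}(\nu)|^{2}d\nu=\frac{1}{N}\|\mathbf{u}\|^{2}$) together with Cauchy--Schwarz,
\[
\int_{0}^{1}|\mathbf{u}^{H}\mathbf{d}_{N}(\nu)|\,|\mathbf{d}_{N}^{H}(\nu)\mathbf{v}|\,d\nu\leq N\Bigl(\int_{0}^{1}|\mathbf{u}^{H}\mathbf{a}_{N}(\nu)|^{2}d\nu\Bigr)^{1/2}\Bigl(\int_{0}^{1}|\mathbf{a}_{N}^{H}(\nu)\mathbf{v}|^{2}d\nu\Bigr)^{1/2}=\|\mathbf{u}\|\,\|\mathbf{v}\|=1 .
\]
Combining, $|\mathbf{u}^{H}\overline{\Psi}(\boldsymbol\Delta)\mathbf{v}|\leq C(z)\frac{L^{3/2}}{MN}$ for all unit $\mathbf{u},\mathbf{v}$, whence $\|\overline{\Psi}(\boldsymbol\Delta)\|\leq C(z)\frac{L^{3/2}}{MN}$, which is \eqref{eq:norm-psibar-delta} (the nice constant from \eqref{eq:upperbound-S} being absorbed into $P_{1},P_{2}$).

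The one point that needs a little care — and which I expect to be the main (minor) obstacle — is making the Parseval/Cauchy--Schwarz step fully rigorous with the continuous frequency integral rather than the discrete DFT: one must check that $\int_{0}^{1}|\mathbf{u}^{H}\mathbf{a}_{N}(\nu)|^{2}\,d\nu=\frac{1}{N}\|\mathbf{u}\|^{2}$, which is immediate since $\int_{0}^{1}e^{2\pi\mathrm{i}(k-k')\nu}d\nu=\delta_{k-k'}$, so that $\int_{0}^{1}\mathbf{d}_{N}(\nu)\mathbf{d}_{N}^{H}(\nu)\,d\nu=\mathbf{I}_{N}$ and $\int_{0}^{1}\mathbf{a}_{N}(\nu)\mathbf{a}_{N}^{H}(\nu)\,d\nu=\frac{1}{N}\mathbf{I}_{N}$. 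Everything else is a direct substitution into \eqref{eq:controle-trace-partielle-Delta}, uniform in $\nu$ because the nice polynomials there are explicitly independent of the data $(\mathbf{b}_{1,N},\mathbf{b}_{2,N},(d_{m,N}))$.
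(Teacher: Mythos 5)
Your proof is correct and follows essentially the same route as the paper: the paper bounds $\|\overline{\Psi}(\boldsymbol\Delta)\|$ by $\sup_{\nu}\frac{1}{M}\sum_{m}\mathcal{S}_{m}(\nu)\left|\mathbf{a}_{L}^{H}(\nu)\boldsymbol\Delta^{m,m}\mathbf{a}_{L}(\nu)\right|$ and then applies (\ref{eq:controle-trace-partielle-Delta}) with exactly your choices $\mathbf{b}_{1}=\mathbf{b}_{2}=\mathbf{a}_{L}(\nu)$, $d_{m}=\mathcal{S}_{m}(\nu)$. The only difference is that you spell out the "immediate" norm bound via the frequency representation, the identity $\int_{0}^{1}\mathbf{d}_{N}(\nu)\mathbf{d}_{N}^{H}(\nu)\,d\nu=\mathbf{I}_{N}$ and Cauchy--Schwarz, which is precisely what the paper leaves implicit.
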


(\ref{eq:norm-psibar-delta}) follows immediately from
\[
\Vert\overline{\Psi}(\Delta)\Vert\leq\sup_{\nu\in\lbrack0,1]}\frac{1}{M}
\sum_{m=1}^{M}\mathcal{S}_{m}(\nu)|\mathbf{a}_{L}^{H}(\nu){\boldsymbol\Delta
}^{m,m}\mathbf{a}_{L}(\nu)|
\]
and from the application of (\ref{eq:controle-trace-partielle-Delta}) to the
case $\mathbf{b}_{1}=\mathbf{b}_{2}=\mathbf{a}_{L}(\nu)$ and $d_{m}
=\mathcal{S}_{m}(\nu)$.

\section{The deterministic equivalents.}

As $\mathbb{E}({\bf Q}(z)) - {\bf R}(z)$ converges towards zero in some appropriate sense, (\ref{eq:def_Rztilde}) and 
(\ref{eq:def_Rz}) suggest that it is reasonable
to expect that $\mathbb{E}({\bf Q}(z))$ behaves as the first component ${\bf T}(z)$ of the solution $({\bf T}(z), \widetilde
{\mathbf{T}}(z)$) of the so-called 
canonical equation
\begin{align}
\mathbf{T}(z)  &  =-\frac{1}{z}\left(  \mathbf{I}_{ML}+\Psi\left(
\widetilde{\mathbf{T}}^{T}(z)\right)  \right)  ^{-1}\label{eq:canonical-T}\\
\widetilde{\mathbf{T}}(z)  &  =-\frac{1}{z}\left(  \mathbf{I}_{N}
+c_{N}\overline{\Psi}^{T}\left(  \mathbf{T}(z)\right)  \right)  ^{-1}.
\label{eq:canonical-tildeT}
\end{align}
In the following, we establish that the canonical equation has a unique solution. More precisely: 

\begin{prop}
\label{proposition:existence_unicity} There exists a unique pair of functions
$(\mathbf{T}(z),\widetilde{\mathbf{T}}(z))\in\mathcal{S}_{ML}(\mathbb{R}
^{+})\times\mathcal{S}_{N}(\mathbb{R}^{+})$ that satisfy (\ref{eq:canonical-T}
, \ref{eq:canonical-tildeT}) for each $z\in\mathbb{C}\setminus\mathbb{R}^{+}$.
Moreover, there exist two nice constants (see Definition \ref{def:nice}) $\eta$ and $\tilde{\eta}$ such that
\begin{align}
\mathbf{T}(z)\mathbf{T}^{H}(z) &  \geq\frac{\delta_{z}^{2}}{16(\eta
^{2}+|z|^{2})^{2}}\mathbf{I}\label{eq:lower-bound-TT*}\\
\widetilde{\mathbf{T}}(z)\widetilde{\mathbf{T}}^{H}(z) &  \geq\frac{\delta
_{z}^{2}}{16(\tilde{\eta}^{2}+|z|^{2})^{2}}\mathbf{I}\label{eq:lower-bound-tildeTtildeT*}
\end{align}

\end{prop}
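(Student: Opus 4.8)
The plan is to treat the system (\ref{eq:canonical-T}, \ref{eq:canonical-tildeT}) as a fixed-point problem and to exploit the Stieltjes-transform structure twice: once to get existence via a compactness/normal-families argument applied to the iterates of the map, and once to get uniqueness via a contraction-type estimate for the difference of two hypothetical solutions. First I would set up the fixed-point map. Substituting (\ref{eq:canonical-tildeT}) into (\ref{eq:canonical-T}) we obtain a single equation $\mathbf{T} = F_z(\mathbf{T})$ on $\mathcal{S}_{ML}(\mathbb{R}^+)$, where $F_z(\mathbf{T}) = -\frac1z\bigl(\mathbf{I}_{ML} + \Psi\bigl( (-\frac1z)(\mathbf{I}_N + c_N\overline{\Psi}^T(\mathbf{T}))^{-1}\bigr)^T\bigr)^{-1}$. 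The key observation — exactly the content of Lemma \ref{lemma:invertibility_RRtilde} — is that $F_z$ maps $\mathcal{S}_{ML}(\mathbb{R}^+)$ into itself: indeed, given $\mathbf{T}\in\mathcal{S}_{ML}(\mathbb{R}^+)$, the intermediate matrix $\widetilde{\mathbf{T}}$ lies in $\mathcal{S}_N(\mathbb{R}^+)$ (the same computation of $\mathrm{Im}\widetilde{\mathbf{T}}$, $\mathrm{Im}(z\widetilde{\mathbf{T}})$ and the $y\to\infty$ limit as in the proof of Lemma \ref{lemma:invertibility_RRtilde} goes through verbatim, with $\mathbb{E}\mathbf{Q}$ replaced by the generic element $\mathbf{T}$), and then $F_z(\mathbf{T})=\mathbf{R}$-type formula again lands in $\mathcal{S}_{ML}(\mathbb{R}^+)$. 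One only needs the positivity $\Psi(\mathbf{M})>0$ for $\mathbf{M}>0$ from (\ref{eq:property_positive}) and the boundedness $\|\Psi(\mathbf{M})\|\le \sup_\nu\mathcal{S}_m(\nu)\|\mathbf{M}\|$, both already established in the Toeplitzification section.

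For existence, I would iterate: define $\mathbf{T}^{(0)}(z) = -\frac1z(\mathbf{I}+ \Psi(\ldots))^{-1}$ starting from, say, $\widetilde{\mathbf{T}}^{(0)} = -\frac1z\mathbf{I}_N$, and set $\mathbf{T}^{(k+1)} = F_z(\mathbf{T}^{(k)})$. By the stability just described every $\mathbf{T}^{(k)}$ lies in $\mathcal{S}_{ML}(\mathbb{R}^+)$, hence by Proposition \ref{prop:class-S}(iv) the family $\{\mathbf{T}^{(k)}\}$ is uniformly bounded by $1/\delta_z$ on compact subsets of $\mathbb{C}\setminus\mathbb{R}^+$ and, being holomorphic, is a normal family (Montel). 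Extract a locally uniformly convergent subsequence; the limit is holomorphic, still satisfies properties (i)–(iii) of Proposition \ref{prop:class-S} (these pass to the limit — (iii) needs a mild uniformity in $k$ of the $-\mathrm{i}y\mathbf{T}^{(k)}(\mathrm{i}y)\to\mathbf{I}$ convergence, which one gets from a uniform bound on the first moments $\int\lambda\,d{\boldsymbol\mu}^{(k)}$ via Proposition \ref{prop:class-S}(v) and (\ref{eq:upperbound-S}), exactly as in the tightness step of Lemma \ref{lemma:invertibility_RRtilde}), so the limit is in $\mathcal{S}_{ML}(\mathbb{R}^+)$, and by continuity of $F_z$ it is a fixed point. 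Then $\widetilde{\mathbf{T}}$ is recovered from (\ref{eq:canonical-tildeT}).

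For uniqueness, suppose $(\mathbf{T}_1,\widetilde{\mathbf{T}}_1)$ and $(\mathbf{T}_2,\widetilde{\mathbf{T}}_2)$ are two solutions. Subtracting the two copies of (\ref{eq:canonical-T}) and using the resolvent identity $\mathbf{X}^{-1}-\mathbf{Y}^{-1}=\mathbf{X}^{-1}(\mathbf{Y}-\mathbf{X})\mathbf{Y}^{-1}$ gives $\mathbf{T}_1-\mathbf{T}_2 = -z\,\mathbf{T}_1\,\Psi(\widetilde{\mathbf{T}}_1^T-\widetilde{\mathbf{T}}_2^T)\,\mathbf{T}_2$, and similarly $\widetilde{\mathbf{T}}_1-\widetilde{\mathbf{T}}_2 = -z c_N\,\widetilde{\mathbf{T}}_1\,\overline{\Psi}^T(\mathbf{T}_1-\mathbf{T}_2)\,\widetilde{\mathbf{T}}_2$. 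The standard device is to restrict to $z=-x$, $x>0$ large, where all matrices are Hermitian positive-definite and small in norm ($\|\mathbf{T}_i\|,\|\widetilde{\mathbf{T}}_i\|\le 1/x$), take spectral norms, use (\ref{eq:transpose_ops_gen})-type duality together with $\|\Psi(\mathbf{M})\|\le \kappa\|\mathbf{M}\|$ and $\|\overline{\Psi}(\mathbf{M})\|\le\kappa\|\mathbf{M}\|$ with $\kappa=\sup_{m,\nu}\mathcal{S}_m(\nu)$, and chain the two identities to get $\|\mathbf{T}_1-\mathbf{T}_2\|\le (\kappa^2 c_N/x^2)\|\mathbf{T}_1-\mathbf{T}_2\|$; for $x$ large the factor is $<1$, forcing $\mathbf{T}_1=\mathbf{T}_2$ on a half-line, hence everywhere by analyticity (identity theorem). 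The main obstacle — and the place where more care than a one-line contraction is needed — is making uniqueness hold on \emph{all} of $\mathbb{C}\setminus\mathbb{R}^+$ rather than just for large $|z|$; the clean way around it is precisely the analytic-continuation argument just sketched (uniqueness for $z$ in a neighbourhood of $-\infty$ on the real axis plus holomorphy of both solutions on the connected set $\mathbb{C}\setminus\mathbb{R}^+$), but one must check that the moment argument really does propagate the $z=-x$ uniqueness, which is why the tightness bound from (\ref{eq:upperbound-S}) is invoked. Finally, the lower bounds (\ref{eq:lower-bound-TT*}), (\ref{eq:lower-bound-tildeTtildeT*}) follow by repeating, word for word, the argument given for (\ref{eq:lower-bound-RR*}) in the proof of Lemma \ref{lemma:invertibility_RRtilde}: $\mathbf{T}(z)$ is block-diagonal, its associated measure ${\boldsymbol\mu}$ is block-diagonal, the first moment of each block is $\mathbf{b}^H\Psi_L^{(m)}(\mathbf{I}_N)\mathbf{b}\le C$ by (\ref{eq:upperbound-S}) and Proposition \ref{prop:class-S}(v), tightness then yields a nice $\eta$ with ${\boldsymbol\mu}^{m,m}([0,\eta])\ge \tfrac12\mathbf{I}$ uniformly, and the inequality (\ref{eq:useful-trick-lower-bound-xi}) closes the estimate.
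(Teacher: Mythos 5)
Your uniqueness argument (contraction of the two difference identities at $z=-x$ with $x$ large, so that $\|\mathbf{T}_1-\mathbf{T}_2\|\leq (c_N\kappa^2/x^2)\|\mathbf{T}_1-\mathbf{T}_2\|$, followed by the identity theorem on the connected set $\mathbb{C}\setminus\mathbb{R}^{+}$) is correct for the proposition as stated, and it is a genuinely different route from the paper: the paper deliberately avoids analytic continuation and instead proves, at each fixed $z$, that the operator $\Phi_0$ of (\ref{eq:def-Phi0}) satisfies $\Phi_0^{(n)}(\mathbf{X})\rightarrow 0$, via the Schur-complement inequality (\ref{eq:boundPhi0PhiTPhiS}) and the convergence of $\Phi_{\mathbf{T}}^{(n)}$, $\Phi_{\mathbf{T}^H}^{(n)}$; this stronger, pointwise-in-$z$ machinery is reused later (Section \ref{sec:convergence-R-T}), which your argument would not provide. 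Two small remarks on your version: the propagation from the half-line to all of $\mathbb{C}\setminus\mathbb{R}^{+}$ needs nothing beyond holomorphy and the identity theorem, so the tightness/moment considerations you invoke there are superfluous; and your treatment of the lower bounds (\ref{eq:lower-bound-TT*}), (\ref{eq:lower-bound-tildeTtildeT*}) by repeating the proof of (\ref{eq:lower-bound-RR*}) is exactly what the paper does.

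There is, however, a genuine gap in your existence argument. After extracting, via Montel, a locally uniformly convergent subsequence $\mathbf{T}^{(k_j)}\rightarrow\mathbf{T}$ of the orbit $\mathbf{T}^{(k+1)}=F_z(\mathbf{T}^{(k)})$, you conclude that ``by continuity of $F_z$ it is a fixed point.'' This does not follow: continuity only gives $F_z(\mathbf{T}^{(k_j)})=\mathbf{T}^{(k_j+1)}\rightarrow F_z(\mathbf{T})$, and nothing forces the shifted subsequence $\mathbf{T}^{(k_j+1)}$ to converge to the same limit $\mathbf{T}$; subsequential limits of an orbit of a non-expansive map need not be fixed points (think of a rotation). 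What is missing is convergence of the \emph{full} sequence somewhere. The paper supplies exactly this: on the domain $\mathcal{D}$ where $\sup_N c_N\sup_{m,\nu}|\mathcal{S}_m(\nu)|^2\,|z|^2/(\mathrm{Im}\,z)^4<1/2$, the operators $\Theta_p$, $\widetilde{\Theta}_p$ are contractions, so $(\mathbf{T}^{(p)}(z))_p$ is Cauchy for $z\in\mathcal{D}$; Montel (or Vitali) then upgrades this to locally uniform convergence of the whole sequence on $\mathbb{C}\setminus\mathbb{R}^{+}$, the limit satisfies (\ref{eq:canonical-T})--(\ref{eq:canonical-tildeT}) on $\mathcal{D}$ and hence everywhere by analyticity, and membership in $\mathcal{S}_{ML}(\mathbb{R}^{+})$ follows from the equation itself since $\|\widetilde{\mathbf{T}}(\mathrm{i}y)\|\leq 1/y$. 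Your proposal can be repaired with material you already have: the very contraction estimate you use for uniqueness (at $z=-x$, $x$ large, or on a domain like $\mathcal{D}$) shows that the full sequence of iterates converges there, and then the normal-family argument propagates the convergence and the fixed-point property to all of $\mathbb{C}\setminus\mathbb{R}^{+}$; but as written, the subsequence-plus-continuity step is not a proof.
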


We devote the rest of this section to proving this proposition. We will first
prove existence of a solution by using a standard convergence argument.

\begin{prop}
\label{prop:iteration}Let $\mathbf{\Gamma}^{m}(z),$ $m=1,\ldots,M$, be a
collection of $L\times L$ matrix-valued complex function belonging to
$\mathcal{S}_{L}\left(  \mathbb{R}^{\mathbb{+}}\right)  $ and define
$\mathbf{\Gamma}(z)=\mathrm{diag}\left(  \mathbf{\Gamma}^{1}(z),\ldots
,\mathbf{\Gamma}^{M}(z)\right)  $. Likewise, let $\widetilde{\mathbf{\Gamma}
}(z)$ be an $N\times N$ matrix-valued complex function belonging to
$\mathcal{S}_{N}\left(  \mathbb{R}^{\mathbb{+}}\right)  $. Consider the two
matrices
\begin{align*}
\mathbf{\Upsilon}(z) &  =-\frac{1}{z}\left(  \mathbf{I}_{ML}+\Psi\left(
\widetilde{\mathbf{\Gamma}}^{T}(z)\right)  \right)  ^{-1}\\
\widetilde{\mathbf{\Upsilon}}(z) &  =-\frac{1}{z}\left(  \mathbf{I}_{N}
+c_{N}\overline{\Psi}^{T}\left(  \mathbf{\Gamma}(z)\right)  \right)  ^{-1}
\end{align*}
and let $\mathbf{\Upsilon}(z)=\mathrm{diag}\left(  \mathbf{\Upsilon}
^{1}(z),\ldots,\mathbf{\Upsilon}^{M}(z)\right)  $. The matrix-valued functions
$\mathbf{\Upsilon}^{m}(z),$ $m=1,\ldots,M$ and $\widetilde{\mathbf{\Upsilon}
}(z)$ are analytic on $\mathbb{C}\backslash\mathbb{R}^{\mathbb{+}}$ and belong
to the classes $\mathcal{S}_{L}\left(  \mathbb{R}^{\mathbb{+}}\right)  $ and
$\mathcal{S}_{N}\left(  \mathbb{R}^{\mathbb{+}}\right)  $ respectively.
\end{prop}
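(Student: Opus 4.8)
The plan is to verify the three characterizing properties of the class $\mathcal{S}_K(\mathbb{R}^+)$ from Proposition~\ref{prop:class-S}(i)--(iii) for each $\mathbf{\Upsilon}^m(z)$ and for $\widetilde{\mathbf{\Upsilon}}(z)$, namely analyticity on $\mathbb{C}\setminus\mathbb{R}^+$, the two positivity conditions $\mathrm{Im}(\cdot)\geq 0$ and $\mathrm{Im}(z\,\cdot)\geq 0$ for $z\in\mathbb{C}^+$, and the normalization $\lim_{y\to+\infty}-\mathrm{i}y\,(\cdot)(\mathrm{i}y)=\mathbf{I}$. This is exactly the computation already carried out in the proof of Lemma~\ref{lemma:invertibility_RRtilde} with $\mathbb{E}\mathbf{Q}(z)$ replaced by the generic $\mathbf{\Gamma}(z)$, so most steps are a transcription of that argument.

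First I would establish invertibility of $\mathbf{I}_N+c_N\overline{\Psi}^T(\mathbf{\Gamma}(z))$ and of $\mathbf{I}_{ML}+\Psi(\widetilde{\mathbf{\Gamma}}^T(z))$, so that $\mathbf{\Upsilon}$ and $\widetilde{\mathbf{\Upsilon}}$ are well defined. For $z\in\mathbb{R}^{-*}$ this is immediate since $\overline{\Psi}^T(\mathbf{\Gamma}(z))\geq 0$ (as $\mathbf{\Gamma}^m(z)\geq 0$ for $z<0$ by the integral representation, and $\overline{\Psi}$, $\Psi$ preserve positivity by (\ref{eq:property_positive}) and the averaging in (\ref{eq:def_Phi_average})). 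For $z\in\mathbb{C}^+$ one uses that $\mathrm{Im}[\mathbf{I}_N+c_N\overline{\Psi}^T(\mathbf{\Gamma}(z))]=c_N\overline{\Psi}^T(\mathrm{Im}\,\mathbf{\Gamma}(z))>0$ because $\mathrm{Im}\,\mathbf{\Gamma}^m(z)>0$ (strict positivity of the imaginary part for an element of $\mathcal{S}_L(\mathbb{R}^+)$ on $\mathbb{C}^+$, which holds unless the measure is degenerate — one must note $\mathbf{\Gamma}^m(\mathbb{R}^+)=\mathbf{I}_L\neq 0$, and more carefully that $\mathrm{Im}\,\mathbf{\Gamma}^m(z)$ is actually strictly positive definite, not merely nonzero; this uses that $\mathrm{Im}\,\mathbf{\Gamma}^m(z)=|\mathrm{Im}\,z|\int (\lambda-z)^{-1}(\lambda-\bar z)^{-1}\,d{\boldsymbol\mu}(\lambda)$ and the total mass is $\mathbf{I}_L$). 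The same argument, now using $\mathrm{Im}\,\widetilde{\mathbf{\Gamma}}(z)>0$ and property (\ref{eq:property_positive}), gives invertibility of $\mathbf{I}_{ML}+\Psi(\widetilde{\mathbf{\Gamma}}^T(z))$.

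Next, analyticity of $\mathbf{\Upsilon}$ and $\widetilde{\mathbf{\Upsilon}}$ on $\mathbb{C}\setminus\mathbb{R}^+$ follows because $\mathbf{\Gamma}$ and $\widetilde{\mathbf{\Gamma}}$ are analytic there (being elements of $\mathcal{S}$-classes), the operators $\Psi$, $\overline{\Psi}^T$ are linear and continuous, and matrix inversion is analytic on the open set where the matrices are invertible. For the positivity properties I would write, exactly as in the proof of Lemma~\ref{lemma:invertibility_RRtilde},
\begin{align*}
\mathrm{Im}\,\widetilde{\mathbf{\Upsilon}}(z)&=\widetilde{\mathbf{\Upsilon}}^H(z)\left[\mathrm{Im}\,z\,\mathbf{I}_N+c_N\overline{\Psi}^T\!\left(\mathrm{Im}[z\mathbf{\Gamma}(z)]\right)\right]\widetilde{\mathbf{\Upsilon}}(z),\\
\mathrm{Im}\,[z\widetilde{\mathbf{\Upsilon}}(z)]&=c_N|z|^2\,\widetilde{\mathbf{\Upsilon}}^H(z)\left[\overline{\Psi}^T\!\left(\mathrm{Im}\,\mathbf{\Gamma}(z)\right)\right]\widetilde{\mathbf{\Upsilon}}(z),
\end{align*}
both of which are $\geq 0$ (in fact $>0$) because $\mathrm{Im}\,\mathbf{\Gamma}^m(z)\geq 0$ and $\mathrm{Im}[z\mathbf{\Gamma}^m(z)]\geq 0$ on $\mathbb{C}^+$ by Proposition~\ref{prop:class-S}(ii), and $\overline{\Psi}$ preserves positivity; and the analogous pair of identities for $\mathbf{\Upsilon}(z)$ in terms of $\mathrm{Im}\,\widetilde{\mathbf{\Gamma}}^T(z)$ and $\mathrm{Im}[z\widetilde{\mathbf{\Gamma}}^T(z)]$. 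Finally, the normalization: since $\mathbf{\Gamma}^m(\mathrm{i}y)\to 0$ and $\widetilde{\mathbf{\Gamma}}(\mathrm{i}y)\to 0$ as $y\to+\infty$ (from Proposition~\ref{prop:class-S}(iii), as $-\mathrm{i}y\,\mathbf{S}(\mathrm{i}y)\to\mathbf{I}$ forces $\mathbf{S}(\mathrm{i}y)\to 0$), and $\Psi$, $\overline{\Psi}^T$ are bounded, we get $-\mathrm{i}y\,\widetilde{\mathbf{\Upsilon}}(\mathrm{i}y)=(\mathbf{I}_N+c_N\overline{\Psi}^T(\mathbf{\Gamma}(\mathrm{i}y)))^{-1}\to\mathbf{I}_N$ and likewise $-\mathrm{i}y\,\mathbf{\Upsilon}(\mathrm{i}y)\to\mathbf{I}_{ML}$; since $\mathbf{\Upsilon}$ is block diagonal this passes to each block $\mathbf{\Upsilon}^m$. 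Then Proposition~\ref{prop:class-S} (the converse direction) gives $\mathbf{\Upsilon}^m(z)\in\mathcal{S}_L(\mathbb{R}^+)$ and $\widetilde{\mathbf{\Upsilon}}(z)\in\mathcal{S}_N(\mathbb{R}^+)$.

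The only genuinely delicate point is confirming strict positive-definiteness of $\mathrm{Im}\,\mathbf{\Gamma}^m(z)$ (and of $\mathrm{Im}\,\widetilde{\mathbf{\Gamma}}(z)$) on $\mathbb{C}^+$, which is what makes the operators $\Psi$, $\overline{\Psi}$ output strictly positive-definite matrices via (\ref{eq:property_positive}) and hence makes the relevant matrices invertible; this rests on the normalization ${\boldsymbol\mu}(\mathbb{R}^+)=\mathbf{I}$ built into the definition of the classes $\mathcal{S}_K(\mathbb{R}^+)$, together with the representation of $\mathrm{Im}\,\mathbf{S}(z)$ as $|\mathrm{Im}\,z|\int(\lambda-z)^{-1}(\lambda-\bar z)^{-1}d{\boldsymbol\mu}(\lambda)$ and the strict positivity of the scalar kernel $|\lambda-z|^{-2}$. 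Everything else is a direct replay of the computations in Lemma~\ref{lemma:invertibility_RRtilde}.
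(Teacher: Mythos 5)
Your proposal is correct and follows essentially the same route as the paper: verify properties (i)--(iii) of Proposition \ref{prop:class-S} for $\mathbf{\Upsilon}^m$ and $\widetilde{\mathbf{\Upsilon}}$ via the same resolvent-type identities for $\mathrm{Im}\,\mathbf{\Upsilon}$ and $\mathrm{Im}[z\mathbf{\Upsilon}]$ and the limit $-\mathrm{i}y\mathbf{\Upsilon}(\mathrm{i}y)\to\mathbf{I}$, then invoke the converse part of Proposition \ref{prop:class-S}. The only (harmless) deviation is the invertibility step, which you settle as in Lemma \ref{lemma:invertibility_RRtilde} through strict positive definiteness of $\mathrm{Im}\,\mathbf{\Gamma}^m(z)$ and $\mathrm{Im}\,\widetilde{\mathbf{\Gamma}}(z)$ (correctly traced back to ${\boldsymbol\mu}(\mathbb{R}^+)=\mathbf{I}$ and property (\ref{eq:property_positive})), whereas the paper applies a kernel-vector argument to $z\mathbf{I}+\Psi\left(z\widetilde{\mathbf{\Gamma}}^{T}(z)\right)$ that only needs $\mathrm{Im}[z\widetilde{\mathbf{\Gamma}}(z)]\geq 0$; just note explicitly that the case $z\in\mathbb{C}^-$ is handled by the same argument with reversed signs, since analyticity is needed on all of $\mathbb{C}\setminus\mathbb{R}^+$.
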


\begin{proof}
The proof follows the lines of \cite[Proposition 5.1]
{hachem-loubaton-najim-aap-2007}. We first need to prove that
$\mathbf{\Upsilon}^{m}(z)$ and $\widetilde{\mathbf{\Upsilon}}(z)$ are analytic
on $\mathbb{C}\backslash\mathbb{R}^{\mathbb{+}}$. To see this, observe that if
$\mathbf{A}(z)$ is an analytic matrix-valued function, so is $\Psi_{K}
^{(m)}\left(  \mathbf{A}(z)\right)  ,m=1,\ldots,M.$ Therefore, we only need to
show $\mathrm{\det}\left(  z\mathbf{I}_{L}+\Psi_{L}^{(m)}\left(
z\widetilde{\mathbf{\Gamma}}^{T}(z)\right)  \right)  \neq0$ and $\mathrm{\det
}\left(  z\mathbf{I}_{N}+c_{N}\overline{\Psi}^{T}\left(  z\mathbf{\Gamma
}(z)\right)  \right)  \neq0$ when $z\in\mathbb{C}\backslash\mathbb{R}
^{\mathbb{+}}$. Let $\mathbf{h}$ denote an arbitrary $L\times1$ column vector
such that $\left(  z\mathbf{I}_{L}+\Psi_{L}^{(m)}\left(  z\widetilde
{\mathbf{\Gamma}}^{T}(z)\right)  \right)  \mathbf{h}=0$. If $z\in
\mathbb{C}^{\mathbb{+}}$, have
\begin{align*}
0 &  =\left\vert \mathbf{h}^{H}\left(  z\mathbf{I}_{L}+\Psi_{L}^{(m)}\left(
z\widetilde{\mathbf{\Gamma}}^{T}(z)\right)  \right)  \mathbf{h}\right\vert
\geq\mathrm{Im}\mathbf{h}^{H}\left(  z\mathbf{I}_{L}+\Psi_{L}^{(m)}\left(
z\widetilde{\mathbf{\Gamma}}^{T}(z)\right)  \right)  \mathbf{h}\\
&  \mathbf{=}\mathbf{h}^{H}\left(  \mathrm{Im}z\mathbf{I}_{L}+\Psi_{L}
^{(m)}\left(  \mathrm{Im}z\widetilde{\mathbf{\Gamma}}^{T}(z)\right)  \right)
\mathbf{h}\geq\mathrm{Im}z\left\Vert \mathbf{h}\right\Vert ^{2}\geq0
\end{align*}
where we have used $\mathrm{Im}z\widetilde{\mathbf{\Gamma}}^{T}(z)\geq
\mathbf{0}$ because $\widetilde{\mathbf{\Gamma}}(z)\in\mathcal{S}_{N}\left(
\mathbb{R}^{\mathbb{+}}\right)  $. From the above chain of inequalities we see
that we can only have $\mathbf{h=0}$. The same argument is valid when
$z\in\mathbb{C}^{\mathbb{-}}$. On the other hand, when $z\in\mathbb{R}
^{\mathbb{-}}$, we will have $-z\widetilde{\mathbf{\Gamma}}^{T}(z)\geq0$ and
\[
0=\mathbf{h}^{H}\left(  -z\mathbf{I}_{L}+\Psi_{L}^{(m)}\left(  -z\widetilde
{\mathbf{\Gamma}}^{T}(z)\right)  \right)  \mathbf{h}\geq\left\vert
z\right\vert \left\Vert \mathbf{h}\right\Vert ^{2}\geq0
\]
which also implies that $\mathbf{h=0}$. A similar argument proves that
$\mathrm{\det}\left(  z\mathbf{I}_{N}+c_{N}\overline{\Psi}^{T}\left(
z\mathbf{\Gamma}(z)\right)  \right)  \neq0$ when $z\in\mathbb{C}
\backslash\mathbb{R}^{\mathbb{+}}$.

Next, we prove that $\mathrm{Im}\mathbf{\Upsilon}^{m}(z)\geq0$ and
$\mathrm{Im}z\mathbf{\Upsilon}^{m}(z)\geq0$ when $z\in\mathbb{C}^{\mathbb{+}}
$. Observe that, using the identity $\mathbf{A}^{-1}-\mathbf{B}^{-1}
=\mathbf{A}^{-1}(\mathbf{B}-\mathbf{A})\mathbf{B}^{-1}$, we have
\[
\mathrm{Im}\mathbf{\Upsilon}^{m}(z)=\mathbf{\Upsilon}^{m}(z)^{H}\left[
\mathrm{Im}z\mathbf{I}_{L}+\Psi_{L}^{(m)}\left(  \mathrm{Im}\left[
z\widetilde{\mathbf{\Gamma}}^{T}(z)\right]  \right)  \right]  \mathbf{\Upsilon
}^{m}(z)\geq0
\]
because $\mathrm{Im}\left[  z\widetilde{\mathbf{\Gamma}}^{T}(z)\right]  \geq0$
since $\widetilde{\mathbf{\Gamma}}(z)\in\mathcal{S}_{N}\left(  \mathbb{R}
^{\mathbb{+}}\right)  $. On the the other hand, we also have
\[
\mathrm{Im}\left[  z\mathbf{\Upsilon}^{m}(z)\right]  =\left[
z\mathbf{\Upsilon}^{m}(z)\right]  ^{H}\left[  \Psi_{L}^{(m)}\left(
\mathrm{Im}\widetilde{\mathbf{\Gamma}}^{T}(z)\right)  \right]  \left[
z\mathbf{\Upsilon}^{m}(z)\right]  \geq0
\]
because $\mathrm{Im}\widetilde{\mathbf{\Gamma}}^{T}(z)\geq0$ since
$\widetilde{\mathbf{\Gamma}}(z)\in\mathcal{S}_{N}\left(  \mathbb{R}
^{\mathbb{+}}\right)  $.

Finally, one can readily see that $\lim_{y\rightarrow0}\mathrm{i}
y\mathbf{\Upsilon}^{m}(\mathrm{i}y)=-\mathbf{I}_{L}$ because $\lim
_{y\rightarrow0}\widetilde{\mathbf{\Gamma}}(\mathrm{i}y)=\mathbf{0}$.
Consequently, Proposition \ref{prop:class-S} implies that $\mathbf{\Upsilon
}^{m}(z),$ $m=1,\ldots,M$ belong to the class $\mathcal{S}_{L}\left(
\mathbb{R}^{\mathbb{+}}\right)  $. We can similarly prove that $\widetilde
{\mathbf{\Upsilon}}(z)$ belongs to $\mathcal{S}_{N}\left(  \mathbb{R}
^{\mathbb{+}}\right)  $.
\end{proof}

Let us now define the sequence of functions in $\mathcal{S}_{ML}\left(
\mathbb{R}^{\mathbb{+}}\right)  $ that will lead to a solution. We begin by
defining $\mathbf{T}^{(0)}(z)=\left(  \Psi\left(  \mathbf{I}_{N}\right)
-z\mathbf{I}_{ML}\right)  ^{-1}$, and use the iterative definition
\begin{align}
\widetilde{\mathbf{T}}^{(p)}(z)  &  =-\frac{1}{z}\left(  \mathbf{I}_{N}
+c_{N}\overline{\Psi}^{T}\left(  \mathbf{T}^{(p)}(z)\right)  \right)
^{-1}\label{eq:seqTp}\\
\mathbf{T}^{(p+1)}(z)  &  =-\frac{1}{z}\left(  \mathbf{I}_{ML}+\Psi\left(
\widetilde{\mathbf{T}}^{(p)}(z)^{T}\right)  \right)  ^{-1}
\label{eq:seqTtildep}
\end{align}
for $p\geq0$. \ By Proposition \ref{prop:iteration} we see that the $L\times
L$ diagonal blocks of $\mathbf{T}^{(p)}(z)$ belong to the class $\mathcal{S}
_{L}\left(  \mathbb{R}^{\mathbb{+}}\right)  $, whereas $\widetilde{\mathbf{T}
}^{(p)}(z)$ belong to $\mathcal{S}_{N}\left(  \mathbb{R}^{\mathbb{+}}\right)
$. In order to prove the existence of a solution to the canonical equation, we
will first prove that the sequence $\mathbf{T}^{(p)}(z)$ has a limit in the
set of $ML\times ML$ diagonal block matrices with blocks belonging to the
class $\mathcal{S}_{L}\left(  \mathbb{R}^{\mathbb{+}}\right)  $. Then, in a
second step, we will prove that this limit is a solution to the canonical equation.

Our first objective is to show that, for $z$ belonging to a certain open
subset of $\mathbb{C}^{+}$, we have $\left\Vert \mathbf{T}^{(p+1)}
(z)-\mathbf{T}^{(p)}(z)\right\Vert \leq K(z)\left\Vert \mathbf{T}
^{(p)}(z)-\mathbf{T}^{(p-1)}(z)\right\Vert $, $p\geq0$, where $0<K(z)<1$. This
will show that, for each $z$ in this open subset, $\mathbf{T}^{(p)}(z)$ forms
a Cauchy sequence and therefore has a limit. Using Montel's theorem, we can
establish that convergence is uniform in compact sets and that the limiting
matrix function is analytic on $\mathbb{C}^{+}$. Define, for $p\geq1$ the
error matrices
\begin{align*}
\mathbf{\epsilon}^{(p)}(z) &  =\mathbf{T}^{(p)}(z)-\mathbf{T}^{(p-1)}(z)\\
\widetilde{\mathbf{\epsilon}}^{(p)}(z) &  =\widetilde{\mathbf{T}}
^{(p)}(z)-\widetilde{\mathbf{T}}^{(p-1)}(z)
\end{align*}
and note that we have a recurrent relationship through the identity
$\mathbf{A}^{-1}-\mathbf{B}^{-1}=\mathbf{A}^{-1}(\mathbf{B}-\mathbf{A}
)\mathbf{B}^{-1}$, namely
\begin{align*}
\mathbf{\epsilon}^{(p+1)}(z) &  =\Theta_{p}\left(  \mathbf{\epsilon}
^{(p)}(z)\right)  \\
\widetilde{\mathbf{\epsilon}}^{(p+1)}(z) &  =\widetilde{\Theta}_{p}\left(
\widetilde{\mathbf{\epsilon}}^{(p)}(z)\right)
\end{align*}
where we have defined the operators
\begin{align*}
\Theta_{p}\left(  \mathbf{X}\right)   &  =c_{N}z^{2}\mathbf{T}^{(p+1)}
(z)\Psi\left(  \widetilde{\mathbf{T}}^{(p)}(z)\overline{\Psi}\left(
\mathbf{X}\right)  \widetilde{\mathbf{T}}^{(p-1)}(z)\right)  \mathbf{T}
^{(p)}(z)\\
\widetilde{\Theta}_{p}\left(  \mathbf{X}\right)   &  =c_{N}z^{2}
\widetilde{\mathbf{T}}^{(p)}(z)\overline{\Psi}^{T}\left(  \mathbf{T}
^{(p+1)}(z)\Psi\left(  \mathbf{X}^{T})\right)  \mathbf{T}^{(p)}(z)\right)
\widetilde{\mathbf{T}}^{(p+1)}(z)
\end{align*}
for $p\geq1$.\ Using the properties of the operators we can obviously
establish that
\[
\max\left\{  \left\Vert \Theta_{p}\left(  \mathbf{X}\right)  \right\Vert
,\left\Vert \widetilde{\Theta}_{p}\left(  \mathbf{X}\right)  \right\Vert
\right\}  \leq\sup_{N}c_{N}\sup_{m,M,\nu}\left\vert \mathcal{S}_{m}\left(
\nu\right)  \right\vert ^{2}\frac{\left\vert z\right\vert ^{2}}{\left(
\mathrm{Im}z\right)  ^{4}}\left\Vert \mathbf{X}\right\Vert .
\]
Consider the domain
\[
\mathcal{D}=\left\{  z\in\mathbb{C}^{+}:\sup_{N}c_{N}\sup_{m,M,\nu}\left\vert
\mathcal{S}_{m}\left(  \nu\right)  \right\vert ^{2}\frac{\left\vert
z\right\vert ^{2}}{\left(  \mathrm{Im}z\right)  ^{4}}<\frac{1}{2}\right\}  .
\]
For $z\in\mathbb{C}^{+}$ we clearly see that both $\Theta_{p}\left(
\mathbf{X}\right)  $ and $\widetilde{\Theta}_{p}\left(  \mathbf{X}\right)  $
are contractive and therefore the sequences $\left(  \mathbf{T}^{(p)}
(z)\right)  _{p}$ and $\left(  \widetilde{\mathbf{T}}^{(p)}(z)\right)  _{p}$
are both Cauchy and have limits, which will be denoted by $\mathbf{T}(z)$ and
$\widetilde{\mathbf{T}}(z)$. Since the sequences $\left(  \mathbf{T}
^{(p)}(z)\right)  _{p}$ and $\left(  \widetilde{\mathbf{T}}^{(p)}(z)\right)
_{p}$ are uniformly bounded on compact subsets of $\mathbb{C}\backslash
\mathbb{R}^{+}$ (because they belong to $\mathcal{S}_{ML}\left(
\mathbb{R}^{\mathbb{+}}\right)  $ and $\mathcal{S}_{N}\left(  \mathbb{R}
^{\mathbb{+}}\right)  $ respectively), Montel's theorem establishes that
$\mathbf{T}(z)$ and $\widetilde{\mathbf{T}}(z)$ are analytic on $\mathbb{C}
\backslash\mathbb{R}^{+}$.\textbf{ }

It remains to prove that $\mathbf{T}(z)$ and $\widetilde{\mathbf{T}}(z)$
respectively belong to $\mathcal{S}_{ML}\left(  \mathbb{R}^{\mathbb{+}
}\right)  $ and $\mathcal{S}_{N}\left(  \mathbb{R}^{\mathbb{+}}\right)  $ and
that they satisfy the canonical system of equations. From the fact that
$\mathrm{Im}\mathbf{T}^{(p)}(z)\geq0$, $\mathrm{Im}z\mathbf{T}^{(p)}(z)\geq0$
and $\mathbf{T}^{(p)}(z)\mathbf{T}^{(p)}(z)^{H}\leq\delta_{z}^{-1}\mathbf{I}$
for $p\geq1$ we have $\mathrm{Im}\mathbf{T}(z)\geq0$, $\mathrm{Im}
z\mathbf{T}(z)\geq0$ and $\mathbf{T}(z)\mathbf{T}(z)^{H}\leq\delta_{z}
^{-1}\mathbf{I}$. The same argument applies to $\widetilde{\mathbf{T}}(z)$. On
the other hand, using the reasoning in the proof of Lemma
\ref{lemma:invertibility_RRtilde} we clearly see that both $\left(
\mathbf{I}_{ML}+\Psi\left(  \widetilde{\mathbf{T}}^{T}(z)\right)  \right)  $
and $\left(  \mathbf{I}_{N}+c_{N}\overline{\Psi}^{T}\left(  \mathbf{T}
(z)\right)  \right)  $ are invertible for $z\in\mathbb{C}\backslash
\mathbb{R}^{\mathbb{+}}$, and that they are the limits of the corresponding
terms on right hand side of (\ref{eq:seqTp})-(\ref{eq:seqTtildep}). This shows
that the pair $\mathbf{T}(z)$, $\widetilde{\mathbf{T}}(z)$ satisfies the
canonical system of equations. Noting that
\[
\lim_{y\rightarrow\infty}-\mathrm{i}y\mathbf{T}(\mathrm{i}y)=\lim
_{y\rightarrow\infty}\left(  \mathbf{I}_{ML}+\Psi\left(  \widetilde
{\mathbf{T}}^{T}(\mathrm{i}y)\right)  \right)  ^{-1}=\mathbf{I}_{ML}
\]
(because $\left\Vert \widetilde{\mathbf{T}}(z)\right\Vert <\delta_{z}
^{-1}\mathbf{I}$) \ we can conclude $\mathbf{T}(z)$ belongs to $\mathcal{S}
_{ML}\left(  \mathbb{R}^{\mathbb{+}}\right)  $. A similar reasoning shows that
$\widetilde{\mathbf{T}}(z)$ belongs to $\mathcal{S}_{N}\left(  \mathbb{R}
^{\mathbb{+}}\right)  $. \newline

Following the proof of (\ref{eq:lower-bound-RR*},
\ref{eq:lower-bound-tildeRtildeR*}), it is easy to check that each solution of
(\ref{eq:canonical-T}, \ref{eq:canonical-tildeT}) satisfies
(\ref{eq:lower-bound-TT*}, \ref{eq:lower-bound-tildeTtildeT*}). \newline

Let us now prove unicity. For this, it would be possible to use arguments
based on the analyticity of the solutions and the Montel theorem as in the
existence proof. We however prefer to use a different approach because the
corresponding ideas will be used later, and rather prove that for each $z$,
system (\ref{eq:canonical-T}, \ref{eq:canonical-tildeT}) considered as a
system in the set of $ML\times ML$ and $N\times N$ matrices, has a unique
solution. We fix $z\in\mathbb{C}\setminus\mathbb{R}^{+}$, and assume that
$\mathbf{T}(z),\widetilde{\mathbf{T}}(z)$ and $\mathbf{S}(z),\widetilde
{\mathbf{S}}(z)$ are matrices that are solutions of the system
(\ref{eq:canonical-T}, \ref{eq:canonical-tildeT}) of equations at point $z$.
It is easily seen that
\begin{equation}
\mathbf{T}(z)-\mathbf{S}(z)=c_{N}z^{2}\mathbf{S}(z)\Psi\left(  \widetilde
{\mathbf{S}}^{T}(z)\overline{\Psi}\left(  \mathbf{T}(z)-\mathbf{S}(z)\right)
\widetilde{\mathbf{T}}^{T}(z)\right)  \mathbf{T}(z)\label{eq:T-S}
\end{equation}
The above equation can alternatively be written as
\[
\mathbf{T}(z)-\mathbf{S}(z)=\Phi_{0}\left(  \mathbf{T}(z)-\mathbf{S}
(z)\right)
\]
where we have defined the operator $\Phi_{0}\left(  \mathbf{X}\right)  $ as
\begin{equation}
\Phi_{0}\left(  \mathbf{X}\right)  =c_{N}z^{2}\mathbf{S}(z)\Psi\left(
\widetilde{\mathbf{S}}^{T}(z)\overline{\Psi}\left(  \mathbf{X}\right)
\widetilde{\mathbf{T}}^{T}(z)\right)  \mathbf{T}(z)\label{eq:def-Phi0}
\end{equation}
where $\mathbf{X}$ is an $ML\times ML$\ block-diagonal matrix. We note that
operator ${\Phi}_{0}$ depends on point $z$, but we do not mention this
dependency in order to simplify the notations. Our objective is to show that
the equation $\Phi_{0}\left(  \mathbf{X}\right)  =\mathbf{X}$ accepts a unique
solution in the set of block-diagonal matrices, which is trivially given by
$\mathbf{X=0}$. This will imply that $\mathbf{T}(z)=\mathbf{S}(z)$,
contradicting the original hypothesis.

We iteratively define $\Phi_{0}^{(n)}\left(  \mathbf{X}\right)  =\Phi
_{0}\left(  \Phi_{0}^{(n-1)}\mathbf{X}\right)  $ for $n\in\mathbb{N}$, with
$\Phi_{0}^{(1)}\left(  \mathbf{X}\right)  =\Phi_{0}\left(  \mathbf{X}\right)
$. Let $\Phi_{0}^{(n)}\left(  \mathbf{X}\right)  ^{m,m}$ denote the $L\times
L$ sized $m$th diagonal block of $\Phi_{0}^{(n)}\left(  \mathbf{X}\right)  $.
In the following, we establish that for block diagonal $ML\times ML$ matrix
$\mathbf{X}$, it holds that $\lim_{n\rightarrow+\infty}\Phi_{0}^{(n)}\left(
\mathbf{X}\right)  =0$. If this property holds, a solution of the equation
$\mathbf{X}=\Phi_{0}(\mathbf{X})$ satisfies $\mathbf{X}=\Phi_{0}
^{(n)}(\mathbf{X})$ for each $n$, thus leading to $\mathbf{X}=0$. It is useful
to mention that in the following analysis, dimensions $L,M,N$ are fixed. We
establish the following Proposition, which, of course, implies that each
element of matrix $\Phi_{0}^{(n)}\left(  \mathbf{X}\right)  $ converges
towards $0$, i.e. that matrix $\Phi_{0}^{(n)}\left(  \mathbf{X}\right)  $
converges towards $0$.

\begin{prop}
\label{prop:normPhi0} For each $\,m=1,\ldots,M$, and for each $L$--dimensional
vectors $\mathbf{a}$ and $\mathbf{b}$, it holds that
\[
\left\vert \mathbf{a}^{H}\Phi_{0}^{(n)}\left(  \mathbf{X}\right)
^{m,m}\mathbf{b}\right\vert \rightarrow0
\]
as $n\rightarrow\infty$ for each $ML \times ML$  matrix $\mathbf{X}$.
\end{prop}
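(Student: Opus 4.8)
The argument I would give has three parts; throughout, $L,M,N$ and $z\in\mathbb{C}\setminus\mathbb{R}^{+}$ are fixed, and I freely use that $\mathbf{S}(z),\mathbf{T}(z)$ are block diagonal, that $\mathbf{S}(z),\mathbf{T}(z)\in\mathcal{S}_{ML}(\mathbb{R}^{+})$, $\widetilde{\mathbf{S}}(z),\widetilde{\mathbf{T}}(z)\in\mathcal{S}_{N}(\mathbb{R}^{+})$ satisfy \eqref{eq:canonical-T}--\eqref{eq:canonical-tildeT} and the bounds $\|\cdot\|\le\delta_z^{-1}$ together with the lower bounds \eqref{eq:lower-bound-TT*}--\eqref{eq:lower-bound-tildeTtildeT*}.

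\textbf{Reduction to a norm estimate.} Since $\overline{\Psi}(\cdot)$ depends only on the diagonal blocks of its argument and $\Psi(\cdot)$, $\mathbf{S}(z)$, $\mathbf{T}(z)$ are block diagonal, $\Phi_{0}$ maps every $ML\times ML$ matrix into the finite--dimensional space $\mathcal{V}$ of block--diagonal $ML\times ML$ matrices, on which it is a linear endomorphism. Hence $\Phi_{0}^{(n)}(\mathbf{X})\to 0$ for every $\mathbf{X}$ iff the spectral radius of $\Phi_{0}|_{\mathcal{V}}$ is $<1$; and, all norms on $\mathcal{V}$ being equivalent, $\|\Phi_{0}^{(n)}(\mathbf{X})\|\to 0$ forces $\mathbf{a}^{H}\Phi_{0}^{(n)}(\mathbf{X})^{m,m}\mathbf{b}\to 0$ for every $m$, $\mathbf{a}$, $\mathbf{b}$. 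So it suffices to exhibit a norm on $\mathcal{V}$ in which $\Phi_{0}$ is a strict contraction.

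\textbf{A coupled half--step recursion.} I would factor $\Phi_{0}=\mathcal{B}\circ\mathcal{A}$, with $\mathcal{A}(\mathbf{X})=zc_{N}\widetilde{\mathbf{S}}^{T}(z)\overline{\Psi}(\mathbf{X})\widetilde{\mathbf{T}}^{T}(z)$ (from $\mathcal{V}$ to $N\times N$ matrices) and $\mathcal{B}(\widetilde{\mathbf{X}})=z\mathbf{S}(z)\Psi(\widetilde{\mathbf{X}})\mathbf{T}(z)$ (back to $\mathcal{V}$), so that, with $\mathbf{Y}_{n}=\Phi_{0}^{(n)}(\mathbf{X})$ and $\widetilde{\mathbf{Y}}_{n}=\mathcal{A}(\mathbf{Y}_{n-1})$, one has $\widetilde{\mathbf{Y}}_{n}=\mathcal{A}(\mathbf{Y}_{n-1})$, $\mathbf{Y}_{n}=\mathcal{B}(\widetilde{\mathbf{Y}}_{n})$. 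The basic quantitative input is that $\Psi$ and $\overline{\Psi}$ are \emph{averaging} operators: from the integral representations of Section 2 and a Cauchy--Schwarz/Fej\'er--kernel estimate one gets $\tfrac{1}{ML}\mathrm{tr}\!\big(\Psi(\widetilde{\mathbf{X}})\Psi(\widetilde{\mathbf{X}})^{H}\big)\le(\sup_{m,\nu}\mathcal{S}_{m}(\nu))^{2}\tfrac{1}{N}\mathrm{tr}\!\big(\widetilde{\mathbf{X}}\widetilde{\mathbf{X}}^{H}\big)$ and symmetrically for $\overline{\Psi}$, i.e. $\Psi,\overline{\Psi}$ are bounded by $\sup\mathcal{S}_{m}$ for the \emph{normalized} Hilbert--Schmidt norms, far below what their operator norms would suggest. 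The plan is then to run the two--term recursion on suitably \emph{weighted} normalized Hilbert--Schmidt norms of $\mathbf{Y}_{n}$ and $\widetilde{\mathbf{Y}}_{n}$ (weights built from $\mathbf{S}\mathbf{S}^{H}$, $\mathbf{T}^{H}\mathbf{T}$ and their tilde analogues, chosen so that the sandwiching matrices are reabsorbed), obtaining $\|\mathbf{Y}_{n}\|\le\theta(z)\|\widetilde{\mathbf{Y}}_{n}\|$ and $\|\widetilde{\mathbf{Y}}_{n}\|\le\widetilde{\theta}(z)\|\mathbf{Y}_{n-1}\|$, hence $\|\mathbf{Y}_{n}\|\le(\theta(z)\widetilde{\theta}(z))^{n}\|\mathbf{Y}_{0}\|$; the identities $\Psi(\mathbf{M}^{H})=\Psi(\mathbf{M})^{H}$, $\mathrm{Im}\,\Psi(\mathbf{M})=\Psi(\mathrm{Im}\,\mathbf{M})$, the adjointness relation \eqref{eq:transpose_ops_gen}, and \eqref{eq:property_positive} are the natural tools for the passages back and forth.

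\textbf{The contraction constant --- the main obstacle.} Proving $\theta(z)\widetilde{\theta}(z)<1$ at each $z\in\mathbb{C}\setminus\mathbb{R}^{+}$ is the heart of the matter, and it cannot come from soft norm inequalities: the naive submultiplicative bound only gives $\|\Phi_{0}(\mathbf{X})\|\le c_{N}(\sup_{m,\nu}\mathcal{S}_{m}(\nu))^{2}|z|^{2}\,\|\mathbf{S}\|\,\|\mathbf{T}\|\,\|\widetilde{\mathbf{S}}\|\,\|\widetilde{\mathbf{T}}\|\,\|\mathbf{X}\|$, a constant which is $<1$ only on the domain $\mathcal{D}$ of the existence proof and which blows up as $z\to 0^{-}$ (where $\|\mathbf{T}(z)\|\sim\delta_{z}^{-1}$), so the gain must be extracted from the canonical equations themselves. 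For $z\in\mathbb{R}^{-*}$, where $\mathbf{S},\mathbf{T},\widetilde{\mathbf{S}},\widetilde{\mathbf{T}}$ are Hermitian positive, I would use the exact identity $\tfrac{1}{|z|}\mathbf{I}_{ML}-\mathbf{T}(z)=\mathbf{T}(z)^{1/2}\Psi\!\big(\widetilde{\mathbf{T}}^{T}(z)\big)\mathbf{T}(z)^{1/2}\ge c(z)\mathbf{I}_{ML}$ with an explicit $c(z)>0$ coming from \eqref{eq:lowerbound-S} and \eqref{eq:lower-bound-TT*}--\eqref{eq:lower-bound-tildeTtildeT*} (and the analogues for $\mathbf{S}$, $\widetilde{\mathbf{S}}$, $\widetilde{\mathbf{T}}$), inserting these improvements into the half--step estimates where, crucially, the averaging operators only cost a factor $\sup\mathcal{S}_{m}$ in the normalized norm. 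For $z\in\mathbb{C}^{+}$ (and by conjugation $z\in\mathbb{C}^{-}$) the analogous gain comes from the imaginary--part identities $\mathrm{Im}\,\mathbf{T}(z)=\mathbf{T}^{H}(z)\big[\mathrm{Im}(z)\mathbf{I}_{ML}+\Psi(\mathrm{Im}(z\widetilde{\mathbf{T}}^{T}(z)))\big]\mathbf{T}(z)$ and $\mathrm{Im}(z\mathbf{T}(z))=|z|^{2}\mathbf{T}^{H}(z)\Psi(\mathrm{Im}\,\widetilde{\mathbf{T}}^{T}(z))\mathbf{T}(z)$ (and their tilde counterparts), combined with the unit--mass normalization ${\boldsymbol\mu}(\mathbb{R}^{+})=\mathbf{I}$ of the underlying matrix measures, exploited exactly through the scalar trick $|\xi(z)|\ge\delta_{z}\int_{\mathbb{R}^{+}}\frac{d\mu_{\xi}(\lambda)}{|\lambda-z|^{2}}$ of the proof of Lemma \ref{lemma:invertibility_RRtilde} --- the device that turns ``total mass one'' into a quantitative separation of $|z|\mathbf{T}(z)$ (etc.) from $\mathbf{I}$. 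Once $\theta(z)\widetilde{\theta}(z)<1$ is secured, $\mathbf{Y}_{n}=\Phi_{0}^{(n)}(\mathbf{X})\to 0$ geometrically and, by the reduction step, Proposition \ref{prop:normPhi0} follows.
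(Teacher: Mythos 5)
Your opening reduction is fine (for a linear map on the finite-dimensional space of block-diagonal matrices, $\Phi_{0}^{(n)}(\mathbf{X})\rightarrow 0$ for all $\mathbf{X}$ is equivalent to spectral radius $<1$, and entrywise convergence follows), but the proof then stops exactly where the real work begins. Your whole plan rests on exhibiting weighted normalized Hilbert--Schmidt norms in which the two half-steps have constants $\theta(z),\widetilde{\theta}(z)$ with $\theta(z)\widetilde{\theta}(z)<1$ for \emph{every} $z\in\mathbb{C}\setminus\mathbb{R}^{+}$, and you acknowledge that this cannot come from the soft submultiplicative bound (which only works on the restricted domain $\mathcal{D}$). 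Yet the gain is never actually extracted: the identities you list (the lower bounds \eqref{eq:lower-bound-TT*}--\eqref{eq:lower-bound-tildeTtildeT*}, the imaginary-part formulas, the mass-one trick from the proof of Lemma \ref{lemma:invertibility_RRtilde}) are only named as ``natural tools''; no inequality combining them into $\theta(z)\widetilde{\theta}(z)<1$ is derived, and it is far from clear that one can be, since a one-step contraction estimate in a fixed norm is a strictly stronger statement than what is needed, and all the quantitative inputs ($\Vert\mathbf{T}\Vert$, $\Vert\mathbf{T}^{-1}\Vert$, the lower bounds on $\mathbf{T}\mathbf{T}^{H}$, $\widetilde{\mathbf{T}}\widetilde{\mathbf{T}}^{H}$) degenerate as $z$ approaches $\mathbb{R}^{+}$. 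So as written this is a strategy with a hole at its heart, not a proof.

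For comparison, the paper sidesteps any contraction constant. It introduces the positivity-preserving operators $\Phi_{\mathbf{S}}$ and $\Phi_{\mathbf{T}^{H}}$ of \eqref{eq:defPhi_S}--\eqref{eq:defPhi_TT}, shows via the positive semidefinite matrix \eqref{eq:MTX>0} and a Schur-complement argument the domination \eqref{eq:boundPhi0PhiTPhiS}, iterates it to obtain \eqref{eq:fundamental-inequality-Phi0}, and then bounds $\left\vert\mathbf{a}^{H}\Phi_{0}^{(n)}(\mathbf{X})^{m,m}\mathbf{b}\right\vert$ by Cauchy--Schwarz in terms of $\mathbf{a}^{H}\Phi_{\mathbf{S}}^{(n)}(\mathbf{X}\mathbf{X}^{H})^{m,m}\mathbf{a}$ and $\mathbf{b}^{H}\Phi_{\mathbf{T}^{H}}^{(n)}(\mathbf{I})^{m,m}\mathbf{b}$. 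The convergence to zero of these two iterated positive operators is then obtained not from a norm estimate but from the exact consequence of the canonical equation, $\frac{\mathrm{Im}\,\mathbf{T}}{\mathrm{Im}\,z}=\mathbf{T}\mathbf{T}^{H}+\Phi_{\mathbf{T}}\bigl(\frac{\mathrm{Im}\,\mathbf{T}}{\mathrm{Im}\,z}\bigr)$ (and its analogues for $\Phi_{\mathbf{T}^{H}}$, $\Phi_{\mathbf{S}}$): iterating this identity shows that the partial sums $\sum_{k\leq n}\Phi_{\mathbf{T}}^{(k)}\bigl(\mathbf{T}\mathbf{T}^{H}\bigr)$ are dominated by the fixed matrix $\frac{\mathrm{Im}\,\mathbf{T}}{\mathrm{Im}\,z}$, so the terms tend to zero, and the invertibility of $\mathbf{T}$ (i.e. $\mathbf{T}\mathbf{T}^{H}\geq\alpha(z)\mathbf{I}$) upgrades this to $\Phi_{\mathbf{T}}^{(n)}(\mathbf{B})\rightarrow 0$ for every positive semidefinite $\mathbf{B}$. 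If you want to salvage your approach you should either carry out a genuine quantitative contraction estimate (which is the missing and hardest step) or switch to this positivity/monotone-series mechanism, which is what makes the proposition provable at every $z\in\mathbb{C}\setminus\mathbb{R}^{+}$ without any smallness condition.
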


To simplify the notation, we drop the dependence on $z$ from $\mathbf{T}(z)$,
$\widetilde{\mathbf{T}}(z)$, $\mathbf{S}(z)$ and $\widetilde{\mathbf{S}}(z)$
in what follows. We begin by defining two operators $\Phi_{\mathbf{S}
}\left(  \mathbf{X}\right)  $ and $\Phi_{\mathbf{T}^{H}}\left(  \mathbf{X}\right)
$ that operate on $ML\times ML$ matrices as
\begin{align}
\Phi_{\mathbf{S}}\left(  \mathbf{X}\right)   &  =c_{N}\left\vert z\right\vert
^{2}\mathbf{S}\Psi\left(  \widetilde{\mathbf{S}}^{T}\overline{\Psi}\left(
\mathbf{X}\right)  \widetilde{\mathbf{S}}^{\ast}\right)  \mathbf{S}
^{H}\label{eq:defPhi_S}\\
\Phi_{\mathbf{T}^{H}}\left(  \mathbf{X}\right)   &  =c_{N}\left\vert
z\right\vert ^{2}\mathbf{T}^{H}\Psi\left(  \widetilde{\mathbf{T}}^{\ast
}\overline{\Psi}\left(  \mathbf{X}\right)  \widetilde{\mathbf{T}}^{T}\right)
\mathbf{T}.\label{eq:defPhi_TT}
\end{align}
We remark that $\Phi_{\mathbf{S}}(\mathbf{X})\geq0$ and $\Phi_{\mathbf{T}^{H}
}(\mathbf{X})\geq0$ if $\mathbf{X}\geq0$. Matrices $\mathbf{T},\widetilde
{\mathbf{T}},\mathbf{S},\widetilde{\mathbf{S}}$ being solutions of equations
(\ref{eq:canonical-T}, \ref{eq:canonical-tildeT}) are non singular. Therefore,
$\Phi_{\mathbf{S}}(\mathbf{X})$ and $\Phi_{\mathbf{T}^{H}}(\mathbf{X})$ are
also positive definite as soon as $\mathbf{X}$ is positive definite.

Consider the integral representation of the $m$th diagonal block of $\Phi
_{0}\left(  \mathbf{X}\right)  $, that is
\begin{align*}
\Phi_{0}\left(  \mathbf{X}\right)  ^{m,m} &  =c_{N}z^{2}\int_{0}^{1}\int
_{0}^{1}F_{m}(\nu,\alpha)\mathbf{S}^{m,m}\mathbf{d}_{L}\left(  \nu\right)
\mathbf{a}_{N}^{H}\left(  \nu\right)  \widetilde{\mathbf{S}}\mathbf{d}
_{N}\left(  \alpha\right)  \times\\
&  \times\mathbf{d}_{N}^{H}\left(  \alpha\right)  \widetilde{\mathbf{T}
}\mathbf{a}_{N}\left(  \nu\right)  \mathbf{d}_{L}^{H}\left(  \nu\right)
\mathbf{T}^{m,m}d\nu d\alpha
\end{align*}
where
\[
F_{m}(\nu,\alpha)=\mathcal{S}_{m}\left(  \nu\right)  \frac{1}{M}\sum
_{k=1}^{M}\mathcal{S}_{k}\left(  \alpha\right)  \mathbf{a}_{L}^{H}\left(
\alpha\right)  \mathbf{X}^{k,k}\mathbf{a}_{L}\left(  \alpha\right)  .
\]
It turns out that, for each integer $n\geq0$ and each $m=1,\ldots,M$ we have
\begin{multline}
\Phi_{0}^{(n+1)}\left(  \mathbf{X}\right)  ^{m,m}\left(  \Phi_{\mathbf{T}^{H}
}^{(n+1)}\left(  \mathbf{I}\right)  ^{m,m}\right)  ^{-1}\left(  \Phi
_{0}^{(n+1)}\left(  \mathbf{X}\right)  ^{m,m}\right)  ^{H}\leq
\label{eq:boundPhi0PhiTPhiS}\\
\leq\Phi_{\mathbf{S}}\left(  \Phi_{0}^{(n)}\left(  \mathbf{X}\right)  \left(
\Phi_{\mathbf{T}^{H}}^{(n)}\left(  \mathbf{I}\right)  \right)  ^{-1}\Phi
_{0}^{(n)}\left(  \mathbf{X}\right)  ^{H}\right)  ^{m,m}
\end{multline}
To see this, we consider the matrix
\begin{equation}
\mathcal{M}_{m}(\mathbf{X},\mathbf{B})=\left[
\begin{array}
[c]{cc}
\Phi_{\mathbf{S}}\left(  \mathbf{XB}^{-1}\mathbf{X}^{H}\right)  ^{m,m} &
\Phi_{0}\left(  \mathbf{X}\right)  ^{m,m}\\
\Phi_{0}^{H}\left(  \mathbf{X}\right)  ^{m,m} & \Phi_{\mathbf{T}^{H}}\left(
\mathbf{B}\right)  ^{m,m}
\end{array}
\right]  \label{eq:MTX>0}
\end{equation}
where $\mathbf{B}$ is an arbitrary $ML\times ML$ Hermitian positive definite
block-diagonal matrix. It turns out that $\mathcal{M}_{m}(\mathbf{X}
,\mathbf{B})\geq0$. Indeed, to see this we only need to observe that this
matrix can alternatively be expressed as
\[
\mathcal{M}_{m}(\mathbf{X},\mathbf{B})=\frac{c_{N}}{M}\sum_{k=1}^{M}\int
_{0}^{1}\int_{0}^{1}\mathcal{S}_{m}\left(  \nu\right)  \mathcal{S}_{k}\left(
\alpha\right)  \Psi_{m,k}(\mathbf{X},\mathbf{B})\Psi_{m,k}(\mathbf{X}
,\mathbf{B})^{H}d\nu d\alpha
\]
where
\[
\Psi_{m,k}(\mathbf{X},\mathbf{B})=\left[
\begin{array}
[c]{c}
z\mathbf{S}^{m,m}\mathbf{d}_{L}\left(  \nu\right)  \mathbf{a}_{N}^{H}\left(
\nu\right)  \widetilde{\mathbf{S}}^{T}\mathbf{d}_{N}\left(  \alpha\right)
\mathbf{a}_{L}^{H}\left(  \alpha\right)  \mathbf{X}^{k,k}\left(
\mathbf{B}^{k,k}\right)  ^{-1/2}\\
z^{\ast}\left(  \mathbf{T}^{m,m}\right)  ^{H}\mathbf{d}_{L}\left(  \nu\right)
\mathbf{a}_{N}^{H}\left(  \nu\right)  \widetilde{\mathbf{T}}^{\ast}
\mathbf{d}_{N}\left(  \alpha\right)  \mathbf{a}_{L}^{H}\left(  \alpha\right)
\left(  \mathbf{B}^{k,k}\right)  ^{1/2}
\end{array}
\right]  .
\]
Now, since $\mathcal{M}_{m}(\mathbf{X},\mathbf{B})\geq0$, the Schur complement
of this matrix will also be positive semidefinite, so that we can state
\[
\Phi_{0}\left(  \mathbf{X}\right)  ^{m,m}\left(  \Phi_{\mathbf{T}^{H}}\left(
\mathbf{B}\right)  ^{m,m}\right)  ^{-1}\Phi_{0}^{H}\left(  \mathbf{X}\right)
^{m,m}\leq\Phi_{\mathbf{S}}\left(  \mathbf{XB}^{-1}\mathbf{X}^{H}\right)
^{m,m}.
\]
Thus, fixing $\mathbf{B}=\Phi_{\mathbf{T}^{H}}^{(n)}\left(  \mathbf{I}\right)
$\ and replacing $\mathbf{X}$ with $\Phi_{0}^{(n)}\left(  \mathbf{X}\right)  $
in the above equation, we directly obtain (\ref{eq:boundPhi0PhiTPhiS}).

By iterating the inequality in (\ref{eq:boundPhi0PhiTPhiS}) and using the
positivity of the operator $\Phi_{\mathbf{S}}\left(  \text{\textperiodcentered
}\right)  $ we obtain
\[
\Phi_{0}^{(n)}\left(  \mathbf{X}\right)  ^{m,m}\left(  \Phi_{\mathbf{T}^{H}
}^{(n)}\left(  \mathbf{I}\right)  ^{m,m}\right)  ^{-1}\left(  \Phi_{0}
^{(n)}\left(  \mathbf{X}\right)  ^{m,m}\right)  ^{H}\leq\Phi_{\mathbf{S}
}^{(n)}\left(  \mathbf{XX}^{H}\right)  ^{m,m}
\]
and
\begin{equation}
\Phi_{0}^{(n)}\left(  \mathbf{X}\right)  \left(  \Phi_{\mathbf{T}^{H}}
^{(n)}\left(  \mathbf{I}\right)  \right)  ^{-1}\left(  \Phi_{0}^{(n)}\left(
\mathbf{X}\right)  \right)  ^{H}\leq\Phi_{\mathbf{S}}^{(n)}\left(
\mathbf{XX}^{H}\right)  .\label{eq:fundamental-inequality-Phi0}
\end{equation}

We can now finalize the proof of Proposition \ref{prop:normPhi0} by noting
that, by the Cauchy-Schwarz inequality and the above inequality
\begin{align*}
&  \left\vert \mathbf{a}^{H}\Phi_{0}^{(n)}\left(  \mathbf{X}\right)
^{m,m}\mathbf{b}\right\vert \\
&  \leq\left[  \mathbf{a}^{H}\Phi_{0}^{(n)}\left(  \mathbf{X}\right)
^{m,m}\left(  \Phi_{\mathbf{T}^{H}}^{(n)}\left(  \mathbf{I}\right)
^{m,m}\right)  ^{-1}\left(  \Phi_{0}^{(n)}\left(  \mathbf{X}\right)
^{m,m}\right)  ^{H}\mathbf{a}\right]  ^{1/2}\left(  \mathbf{b}^{H}\left(
\Phi_{\mathbf{T}^{H}}^{(n)}\left(  \mathbf{I}\right)  ^{m,m}\right)
\mathbf{b}\right)  ^{1/2}\\
&  \leq\left[  \mathbf{a}^{H}\Phi_{\mathbf{S}}^{(n)}\left(  \mathbf{XX}
^{H}\right)  ^{m,m}\mathbf{a}\right]  ^{1/2}\left[  \mathbf{b}^{H}
\Phi_{\mathbf{T}^{H}}^{(n)}\left(  \mathbf{I}\right)  ^{m,m}\mathbf{b}\right]
^{1/2}.
\end{align*}
Therefore, to conclude the proof we only need to show that both $\Phi
_{\mathbf{S}}^{(n)}\left(  \mathbf{XX}^{H}\right)  ^{m,m}$ and $\Phi
_{\mathbf{T}^{H}}^{(n)}\left(  \mathbf{I}_{N}\right)  ^{m,m}$ converge to zero
as $n\rightarrow\infty$. This can be shown following the steps in
\cite{loubaton16}, as established in the following proposition.

\begin{lem}
Let $\mathbf{T}(z),\widetilde{\mathbf{T}}(z)$ be a solution to the canonical
equation (\ref{eq:canonical-T}, \ref{eq:canonical-tildeT}) at point $z$, and
let $\Phi_{\mathbf{T}}^{(n)}\left(  \mathbf{B}\right)  $ be defined, for a
positive semidefinite $\mathbf{B}$, as in (\ref{eq:defPhi_S}). Then, it holds
that
\begin{equation}
\Phi_{\mathbf{T}}^{(n)}\left(  \mathbf{B}\right)  \rightarrow
0\label{eq:convergence-Phi_Tn-zero}
\end{equation}
and
\begin{equation}
\Phi_{\mathbf{T}^{H}}^{(n)}\left(  \mathbf{B}\right)  \rightarrow
0\label{eq:convergence-Phi_THn-zero}
\end{equation}
as $n\rightarrow\infty$. Moreover, the series $\sum_{n=0}^{+\infty}
\Phi_{\mathbf{T}}^{(n)}\left(  \mathbf{B}\right)  $ and $\sum_{n=0}^{+\infty
}\Phi_{\mathbf{T}^{H}}^{(n)}\left(  \mathbf{B}\right)  $ are finite.
\end{lem}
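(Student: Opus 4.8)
The plan is to reduce everything to exhibiting, for each fixed $z\in\mathbb{C}\setminus\mathbb{R}^{+}$, a positive definite matrix $\mathbf{U}=\mathbf{U}(z)$ and a scalar $\kappa=\kappa(z)\in(0,1)$ with $\Phi_{\mathbf{T}}(\mathbf{U})\leq\kappa\,\mathbf{U}$ and $\Phi_{\mathbf{T}^{H}}(\mathbf{U})\leq\kappa\,\mathbf{U}$. The operators $\Phi_{\mathbf{S}}$, $\Phi_{\mathbf{T}}$ and $\Phi_{\mathbf{T}^{H}}$ are linear and map the cone of positive semidefinite block-diagonal matrices into itself (this uses only $\mathcal{S}_{m}\geq 0$, so that $\Psi_{K}^{(m)}(\mathbf{M})\geq 0$ whenever $\mathbf{M}\geq 0$, together with the non-singularity of $\mathbf{T}$, $\widetilde{\mathbf{T}}$). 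Hence for an arbitrary positive semidefinite $\mathbf{B}$ one has $\mathbf{0}\leq\mathbf{B}\leq\Vert\mathbf{B}\Vert\,\lambda_{\min}(\mathbf{U})^{-1}\,\mathbf{U}$, and monotonicity gives $\mathbf{0}\leq\Phi_{\mathbf{T}^{H}}^{(n)}(\mathbf{B})\leq\Vert\mathbf{B}\Vert\,\lambda_{\min}(\mathbf{U})^{-1}\,\kappa^{n}\,\mathbf{U}$, with analogous bounds for $\Phi_{\mathbf{T}}$ and $\Phi_{\mathbf{S}}$. This yields (\ref{eq:convergence-Phi_Tn-zero}) and (\ref{eq:convergence-Phi_THn-zero}), and since the partial sums of $\sum_{n}\Phi_{\mathbf{T}^{H}}^{(n)}(\mathbf{B})$ form a nondecreasing sequence of Hermitian matrices dominated by $\Vert\mathbf{B}\Vert\,\lambda_{\min}(\mathbf{U})^{-1}(1-\kappa)^{-1}\,\mathbf{U}$, the series also converge.

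The matrix $\mathbf{U}$ comes from the canonical equations. Assume first $z\in\mathbb{C}^{+}$. From (\ref{eq:canonical-T})--(\ref{eq:canonical-tildeT}) we have $\mathbf{T}^{-1}=-z\mathbf{I}_{ML}-\Psi(z\widetilde{\mathbf{T}}^{T})$ and $(z\widetilde{\mathbf{T}})^{-1}=-\mathbf{I}_{N}-c_{N}\overline{\Psi}^{T}(\mathbf{T})$; using $\Psi(\mathbf{M})^{H}=\Psi(\mathbf{M}^{H})$, $\overline{\Psi}(\mathbf{M})^{H}=\overline{\Psi}(\mathbf{M}^{H})$, the identity $\mathbf{A}^{-1}-\mathbf{B}^{-1}=\mathbf{A}^{-1}(\mathbf{B}-\mathbf{A})\mathbf{B}^{-1}$ and the relation $\mathbf{T}^{-H}(\mathrm{Im}\,\mathbf{T})\mathbf{T}^{-1}=-\mathrm{Im}(\mathbf{T}^{-1})$, I would derive
\[
-\mathrm{Im}(\mathbf{T}^{-1})=\mathrm{Im}(z)\,\mathbf{I}_{ML}+\Psi\!\left(\mathrm{Im}(z\widetilde{\mathbf{T}}^{T})\right),\qquad
\mathrm{Im}(z\widetilde{\mathbf{T}}^{T})=c_{N}|z|^{2}\,\widetilde{\mathbf{T}}^{*}\,\overline{\Psi}\!\left(\mathrm{Im}\,\mathbf{T}\right)\widetilde{\mathbf{T}}^{T},
\]
and hence, sandwiching by $\mathbf{T}^{H}(\,\cdot\,)\mathbf{T}$, the key identity
\[
\mathrm{Im}\,\mathbf{T}(z)=\mathrm{Im}(z)\,\mathbf{T}^{H}(z)\mathbf{T}(z)+\Phi_{\mathbf{T}^{H}}\!\left(\mathrm{Im}\,\mathbf{T}(z)\right),
\]
with $\Phi_{\mathbf{T}^{H}}$ exactly as in (\ref{eq:defPhi_TT}); sandwiching instead by $\mathbf{T}(\,\cdot\,)\mathbf{T}^{H}$, resp. $\mathbf{S}(\,\cdot\,)\mathbf{S}^{H}$, produces the analogous identities for $\Phi_{\mathbf{T}}$ and $\Phi_{\mathbf{S}}$. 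Taking $\mathbf{U}=\mathrm{Im}\,\mathbf{T}(z)$ --- positive definite since $\mathrm{Im}\,\mathbf{T}(z)\geq\mathrm{Im}(z)\,\mathbf{T}^{H}(z)\mathbf{T}(z)>0$ --- we get $\Phi_{\mathbf{T}^{H}}(\mathbf{U})=\mathbf{U}-\mathrm{Im}(z)\,\mathbf{T}^{H}\mathbf{T}$, and combining $\mathbf{T}^{H}\mathbf{T}\geq\delta_{z}^{2}\,(16(\eta^{2}+|z|^{2})^{2})^{-1}\,\mathbf{I}$ (from (\ref{eq:lower-bound-TT*})) with $\mathbf{U}\leq\Vert\mathbf{T}\Vert\,\mathbf{I}\leq\delta_{z}^{-1}\,\mathbf{I}$ (from item (iv) of Proposition \ref{prop:class-S}) gives $\Phi_{\mathbf{T}^{H}}(\mathbf{U})\leq(1-\gamma(z)\delta_{z})\,\mathbf{U}$ with $\gamma(z)=\mathrm{Im}(z)\,\delta_{z}^{2}(16(\eta^{2}+|z|^{2})^{2})^{-1}$; since $\gamma(z)\delta_{z}\leq 1/16$, the constant $\kappa(z):=1-\gamma(z)\delta_{z}$ lies in $(0,1)$, and $\lambda_{\min}(\mathbf{U})\geq\gamma(z)>0$. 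The same computation applies to $\Phi_{\mathbf{T}}$ and, since $\mathbf{S}$ also solves the canonical system, to $\Phi_{\mathbf{S}}$.

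For $z\in\mathbb{C}^{-}$ I would pass to $\bar z$ (changing $\mathrm{Im}$ into $-\mathrm{Im}$), and for $z\in\mathbb{R}^{-*}$ I would use that $\mathbf{T}(z)$, $\widetilde{\mathbf{T}}(z)$ are then positive definite (by (\ref{eq:property_positive}) and the argument used in Lemma \ref{lemma:invertibility_RRtilde}); the same manipulations then give $\mathbf{T}(z)=|z|\,\mathbf{T}^{H}(z)\mathbf{T}(z)+\Phi_{\mathbf{T}^{H}}(\mathbf{T}(z))+\mathbf{E}(z)$ with $\mathbf{E}(z)\geq 0$, so that $\mathbf{U}=\mathbf{T}(z)$ satisfies $\Phi_{\mathbf{T}^{H}}(\mathbf{U})\leq\big(1-\delta_{z}^{2}\lambda_{\min}(\mathbf{T}\mathbf{T}^{H})\big)\mathbf{U}$, again a genuine contraction by (\ref{eq:lower-bound-TT*}). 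Combining the three cases with the reduction of the first paragraph finishes the proof. This is the contraction mechanism of \cite{loubaton16}; in the present setting the only points requiring attention are the transpose/conjugate bookkeeping needed to identify the positive operator produced by the imaginary-part identities with $\Phi_{\mathbf{S}}$, resp. $\Phi_{\mathbf{T}^{H}}$, as defined in (\ref{eq:defPhi_S})--(\ref{eq:defPhi_TT}), and the explicit dependence of the constants on $z$ through $\delta_{z}$ and $|z|$, the generalization being made possible by the bounds $\Vert\Psi_{K}^{(m)}(\mathbf{M})\Vert\leq\sup_{\nu}|\mathcal{S}_{m}(\nu)|\,\Vert\mathbf{M}\Vert$ and $\Psi_{K}^{(m)}(\mathbf{M})>0$ for $\mathbf{M}>0$ (this last via (\ref{eq:lowerbound-S})).
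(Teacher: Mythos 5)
Your proposal is correct, and it exploits the same key identity as the paper --- namely $\frac{\mathrm{Im}\,\mathbf{T}}{\mathrm{Im}\,z}=\mathbf{T}^{H}\mathbf{T}+\Phi_{\mathbf{T}^{H}}\bigl(\frac{\mathrm{Im}\,\mathbf{T}}{\mathrm{Im}\,z}\bigr)$ and its $\Phi_{\mathbf{T}}$ analogue --- but you conclude by a genuinely different mechanism. The paper telescopes this identity: iterating it gives $\frac{\mathrm{Im}\,\mathbf{T}}{\mathrm{Im}\,z}=\sum_{k=0}^{n}\Phi_{\mathbf{T}}^{(k)}(\mathbf{T}\mathbf{T}^{H})+\Phi_{\mathbf{T}}^{(n+1)}\bigl(\frac{\mathrm{Im}\,\mathbf{T}}{\mathrm{Im}\,z}\bigr)$, positivity of every term bounds the partial sums, convergence of the series follows, and the passage from $\mathbf{T}\mathbf{T}^{H}$ to $\mathbf{I}$ (hence to general $\mathbf{B}$) only uses that $\mathbf{T}$ is invertible, i.e.\ $\mathbf{T}\mathbf{T}^{H}\geq\alpha(z)\mathbf{I}$ for some unspecified $\alpha(z)>0$; no rate is obtained. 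You instead turn the identity into a strict contraction $\Phi_{\mathbf{T}^{H}}(\mathbf{U})\leq\kappa(z)\,\mathbf{U}$ with $\mathbf{U}=\mathrm{Im}\,\mathbf{T}$ and $\kappa(z)<1$, using the quantitative bounds (\ref{eq:lower-bound-TT*}) and $\Vert\mathbf{T}\Vert\leq\delta_{z}^{-1}$, and then dominate an arbitrary $\mathbf{B}\geq0$ by a multiple of $\mathbf{U}$. This buys more than the paper's argument: geometric decay $\Phi^{(n)}(\mathbf{B})=\mathcal{O}(\kappa^{n})$ and an explicit, dimension-independent bound on $\sum_{n}\Phi_{\mathbf{T}}^{(n)}(\mathbf{I})$, which is exactly the uniformity that Remark \ref{re:alpha} announces will be needed later (the paper has to rederive it in Section \ref{sec:convergence-R-T}). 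Two minor remarks. First, your case split is heavier than necessary: taking $\mathbf{U}=\frac{\mathrm{Im}\,\mathbf{T}}{\mathrm{Im}\,z}$ handles $\mathbb{C}^{+}$ and $\mathbb{C}^{-}$ at once (as the paper does), so the passage to $\bar z$ can be avoided; your separate treatment of $z\in\mathbb{R}^{-*}$ via $\mathbf{U}=\mathbf{T}(z)>0$ does work (the slack term $\mathbf{E}(z)\geq0$ indeed comes from the extra $x\mathbf{I}$ in $\widetilde{\mathbf{T}}^{-1}$), and is no less rigorous than the paper's appeal to $\mathbf{T}'(z)$. Second, the explicit constants $\gamma(z),\kappa(z)$ rely on (\ref{eq:lower-bound-TT*}) and on item (iv) of Proposition \ref{prop:class-S}, i.e.\ on the solution being of Stieltjes class; in the uniqueness step the lemma is applied to bare matrix solutions at a fixed $z$, for which the paper's proof needs only invertibility. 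This is not a real gap in your argument, since the same contraction runs with $\lambda_{\min}(\mathbf{T}^{H}\mathbf{T})$ and $\Vert\mathrm{Im}\,\mathbf{T}\Vert$ in place of the nice-constant bounds (one checks $\kappa=1-\mathrm{Im}(z)\lambda_{\min}(\mathbf{T}^{H}\mathbf{T})/\Vert\mathrm{Im}\,\mathbf{T}\Vert\in[0,1)$ automatically), but it is worth being explicit about which version you invoke where; the transpose/conjugate ordering issue you flag ($\Psi(\widetilde{\mathbf{T}}^{*}\overline{\Psi}(\cdot)\widetilde{\mathbf{T}}^{T})$ versus $\Psi(\widetilde{\mathbf{T}}^{T}\overline{\Psi}(\cdot)\widetilde{\mathbf{T}}^{*})$ after sandwiching by $\mathbf{T}(\cdot)\mathbf{T}^{H}$) is present in the paper's own (\ref{eq:ImT}) as well and is harmless for the argument.
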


\begin{proof}
If $\mathbf{T}$ is a solution to the canonical equation, we must have
\begin{equation}
\mathrm{Im}\mathbf{T}=\frac{\mathbf{T-T}^{H}}{2\mathrm{i}}=\mathrm{Im}
z\mathbf{TT}^{H}+\Phi_{\mathbf{T}}\left(  \mathrm{Im}\mathbf{T}\right)
\label{eq:ImT}
\end{equation}
or equivalently
\begin{equation}
\frac{\mathrm{Im}\mathbf{T}}{\mathrm{Im}z}=\mathbf{TT}^{H}+\Phi_{\mathbf{T}
}\left(  \frac{\mathrm{Im}\mathbf{T}}{\mathrm{Im}z}\right)  \label{eq:ImT}
\end{equation}
if $\mathrm{Im}(z)\neq0$. If $z$ belongs to $\mathbb{R}^{-\ast}$, then
$\frac{\mathrm{Im}\mathbf{T}}{\mathrm{Im}z}$ should be interpreted as positive
matrix $\mathbf{T}^{\prime}(z)=\int_{\mathbb{R}^{+}}\frac{d{\boldsymbol\mu
}(\lambda)}{(\lambda-z)^{2}}$, and the reader may verify that the following
arguments are still valid. Iterating the above relationship, we see that for
any $n\in\mathbb{N}$
\[
\frac{\mathrm{Im}\mathbf{T}}{\mathrm{Im}z}=\sum_{k=0}^{n}\Phi_{\mathbf{T}
}^{(k)}\left(  \mathbf{TT}^{H}\right)  +\Phi_{\mathbf{T}}^{(n+1)}\left(
\frac{\mathrm{Im}\mathbf{T}}{\mathrm{Im}z}\right)  .
\]
Since $\frac{\mathrm{Im}\mathbf{T}}{\mathrm{Im}z}\geq0$, we have
$\Phi_{\mathbf{T}}^{(n+1)}\left(  \frac{\mathrm{Im}\mathbf{T}}{\mathrm{Im}
z}\right)  \geq0$. On the other hand, we also have $\Phi_{\mathbf{T}}
^{(k)}\left(  \mathbf{TT}^{H}\right)  \geq0$ and therefore it holds that for
each $n$,
\[
\sum_{k=0}^{n}\Phi_{\mathbf{T}}^{(k)}\left(  \mathbf{TT}^{H}\right)  \leq
\frac{\mathrm{Im}\mathbf{T}}{\mathrm{Im}z}
\]
The series $\sum_{k=0}^{+\infty}\Phi_{\mathbf{T}}^{(k)}\left(  \mathbf{TT}
^{H}\right)  $ is thus convergent and we must have $\Phi_{\mathbf{T}}
^{(n)}\left(  \mathbf{TT}^{H}\right)  \rightarrow\mathbf{0}$ as $n\rightarrow
\infty$. Since matrix $\mathbf{T}$ is invertible, $\mathbf{T}^{H}
\mathbf{T}>\alpha(z)\mathbf{I}$ where $\alpha\left(  z\right)  >0$. Therefore,
$\alpha(z)\,\Phi_{\mathbf{T}}^{(n)}\left(  \mathbf{I}\right)  \leq
\Phi_{\mathbf{T}}^{(n)}\left(  \mathbf{TT}^{H}\right)  $, which implies that
$\Phi_{\mathbf{T}}^{(n)}\left(  \mathbf{I}\right)  $ converges towards $0$ and
that $\sum_{n=0}^{+\infty}\Phi_{\mathbf{T}}^{(n)}\left(  \mathbf{I}\right)
<+\infty$. Now, consider a general positive semidefinite $\mathbf{B}$. Then,
$\mathbf{B}\leq\Vert\mathbf{B}\Vert\mathbf{I}$ and $\Phi_{\mathbf{T}}
^{(n)}\left(  \mathbf{B}\right)  \leq\Vert\mathbf{B}\Vert\Phi_{\mathbf{T}
}^{(n)}\left(  \mathbf{I}\right)  $. Hence, it holds that $\Phi_{\mathbf{T}
}^{(n)}\left(  \mathbf{B}\right)  \rightarrow0$ and $\sum_{n=0}^{+\infty}
\Phi_{\mathbf{T}}^{(n)}\left(  \mathbf{B}\right)  <+\infty$. In particular,
$\Phi_{\mathbf{T}}^{(n+1)}\left(  \frac{\mathrm{Im}\mathbf{T}}{\mathrm{Im}
z}\right)  \rightarrow0$ as $n\rightarrow\infty$ and
\[
\frac{\mathrm{Im}\mathbf{T}}{\mathrm{Im}z}=\sum_{k=0}^{+\infty}\Phi
_{\mathbf{T}}^{(k)}\left(  \mathbf{TT}^{H}\right)  .
\]
\newline In order to establish (\ref{eq:convergence-Phi_THn-zero}), we use the
observation that
\[
\frac{\mathrm{Im}\mathbf{T}}{\mathrm{Im}z}=\mathbf{T}^{H}\mathbf{T}
+\Phi_{\mathbf{T}^{H}}\left(  \frac{\mathrm{Im}\mathbf{T}}{\mathrm{Im}
z}\right)
\]
and use the same arguments as above.
\end{proof}

\begin{rem}
\label{re:alpha} In the above analysis, $L,M,N$ are fixed parameters.
Therefore, $\alpha(z)$ a priori depends on $L,M,N$ as well as the norms of the
series $\sum_{n=0}^{+\infty}\Phi_{\mathbf{T}}^{(n)}\left(  \mathbf{I}\right)
$. In the following, a more precise analysis will be needed, and it will be
important to show that such an $\alpha(z)$ can be chosen independent from
$L,M,N$, and that the
\[
\sup_{N}\Vert\sum_{n=0}^{+\infty}\Phi_{\mathbf{T}}^{(n)}\left(  \mathbf{I}
\right)  \Vert<+\infty.
\]

\end{rem}

\section{Convergence towards the deterministic equivalent.}

\label{sec:convergence-R-T} If $(\mathbf{A}_{N})_{N\geq1}$ is a sequence of
deterministic uniformly bounded $ML\times ML$ matrices, Lemma \ref{le:variance-trace} implies  that the rate of convergence of
$\mathrm{var}\left(  \frac{1}{ML}\mathrm{tr}\left[  \mathbf{A}_N\mathbf{Q}
(z)\right]  \right)  $ is $\mathcal{O}(\frac{1}{MN})$. In the absence of extra
assumptions on $M$ (e.g. $M=\mathcal{O}(N^{\epsilon})$ for $\epsilon>0$), this
does not allow to conclude that
\begin{equation}
\frac{1}{ML}\mathrm{tr}\left(  \mathbf{A}_N\left[  \mathbf{Q}(z)-\mathbb{E}
(\mathbf{Q}(z))\right]  \right)  \rightarrow
0\label{eq:almost-sure-convergence-trace}
\end{equation}
almost surely. In order to obtain the almost sure convergence, we use the
identity
\[
\mathbb{E}\left\vert \frac{1}{ML}\mathrm{tr}\left(  \mathbf{A}_N\mathbf{Q}
^{\circ}(z)\right)  \right\vert ^{4}=\left\vert \mathbb{E}\left(  \frac{1}
{ML}\mathrm{tr}(\mathbf{A}_N\mathbf{Q}^{\circ}(z))\right)  ^{2}\right\vert
^{2}+\mathrm{var}\left[  \left(  \frac{1}{ML}\mathrm{tr}(\mathbf{A}_N
\mathbf{Q}^{\circ}(z))\right)  ^{2}\right]  ,
\]
remark that
\[
\left\vert \mathbb{E}\left(  \frac{1}{ML}\mathrm{tr}(\mathbf{A}_N\mathbf{Q}
^{\circ}(z))\right)  ^{2}\right\vert ^{2}\leq\left[  \mathrm{var}\left(
\frac{1}{ML}\mathrm{tr}\left[  \mathbf{A}_N\mathbf{Q}(z)\right]  \right)
\right]  ^{2}\leq C(z)\frac{1}{(MN)^{2}},
\]
and establish using the Nash-Poincar\'{e} inequality that
\[
\mathrm{var}\left[  \left(  \frac{1}{ML}\mathrm{tr}(\mathbf{A}_N\mathbf{Q}
^{\circ}(z))\right)  ^{2}\right]  \leq C(z)\frac{1}{(MN)^{2}}
\]
(the proof is straighforward and thus omitted). Markov inequality and
Borel-Cantelli's Lemma immediately imply that
(\ref{eq:almost-sure-convergence-trace}) holds, and that
\[
\frac{1}{ML}\mathrm{tr}\left(  \mathbf{A}_N\left[  \mathbf{Q}(z)-\mathbb{E}
(\mathbf{Q}(z))\right]  \right)  =\mathcal{O}_{P}\left(  \frac{1}{\sqrt{MN}
}\right)  .
\]
In the following, we study the behaviour of $\frac{1}{ML}\mathrm{tr}\left[
\mathbf{A}_N\left(\mathbb{E}(\mathbf{Q}(z)) - {\bf T}(z)\right)\right]  $. In this section, we first
establish that for each sequence of deterministic uniformly bounded $ML\times
ML$ matrices $(\mathbf{A}_{N})_{N\geq1}$, it holds that
\begin{equation}
\frac{1}{ML}\mathrm{tr}\left[  \mathbf{A}_N\left(  \mathbb{E}\mathbf{Q}
(z)-\mathbf{T}(z)\right)  \right]  \rightarrow
0\label{eq:convergence-E(Q)-T-towards-0}
\end{equation}
for each $z\in\mathbb{C}\setminus\mathbb{R}^{+}$, a property which, by virtue
of Proposition \ref{prop:control-Delta}, is equivalent to
\begin{equation}
\frac{1}{ML}\mathrm{tr}\left[  \mathbf{A}_N\left(  \mathbf{R}(z)-\mathbf{T}
(z)\right)  \right]  \rightarrow0\label{eq:convergence-R-T-towards-0}
\end{equation}
for each $z\in\mathbb{C}\setminus\mathbb{R}^{+}$. However,
(\ref{eq:convergence-E(Q)-T-towards-0}) does not provide any information on
the rate of convergence. Under the extra-assumption that
\begin{equation}
\frac{L^{3/2}}{MN}\rightarrow0\label{eq:extra-assumption-L-M-N}
\end{equation}
we establish that
\begin{equation}
\left\vert \frac{1}{ML}\mathrm{tr}\left[  \mathbf{A}_N\left(  \mathbb{E}
(\mathbf{Q}(z))-\mathbf{T}(z)\right)  \right]  \right\vert \leq C(z)\frac
{L}{MN}\label{eq:rate-convergence-E(Q)-T-towards-0}
\end{equation}
or equivalently that
\begin{equation}
\left\vert \frac{1}{ML}\mathrm{tr}\left[  \mathbf{A}_N\left(  \mathbf{R}
(z)-\mathbf{T}(z)\right)  \right]  \right\vert \leq C(z)\frac{L}
{MN}\label{eq:rate-convergence-R-T-towards-0}
\end{equation}
when $z$ belongs to a set $E_{N}$ defined by
\[
E_{N}=\{z\in\mathbb{C}\setminus\mathbb{R}^{+},\frac{L^{3/2}}{MN}
P_{1}(|z|)P_{2}(1/\delta_{z})<1\}
\]
where $P_{1}$ and $P_{2}$ are some nice polynomials. When
(\ref{eq:extra-assumption-L-M-N}) holds, each element $z\in\mathbb{C}
\setminus\mathbb{R}^{+}$ belongs to $E_{N}$ for $N$ greater than a certain
integer depending on $z$. Therefore, (\ref{eq:rate-convergence-R-T-towards-0})
implies that the rate of convergence of $\frac{1}{ML}\mathrm{tr}\left[
\mathbf{A}_N\left(  \mathbb{E}(\mathbf{Q}(z))-\mathbf{T}(z)\right)  \right]  $
towards $0$ is $\mathcal{O}(\frac{L}{MN})$ for each $z\in\mathbb{C}
\setminus\mathbb{R}^{+}$.

\subsection{Proof of (\ref{eq:convergence-R-T-towards-0}).}

In order to simplify the notations, matrix ${\bf A}_N$ is denoted by ${\bf A}$. Writing $\mathbf{R}(z)-\mathbf{T}(z)$ as $\mathbf{R}(z)\left(  \mathbf{R}
^{-1}(z)-\mathbf{T}^{-1}(z)\right)  \mathbf{T}(z)$ and {$\widetilde
{\mathbf{R}}$}$(z)-\widetilde{\mathbf{T}}(z)=\widetilde{\mathbf{T}}(z)\left(
\mathbf{T}^{-1}(z)-\mathbf{R}^{-1}(z)\right)  \widetilde{\mathbf{R}}(z)$, we
obtain immediately that
\begin{align}
\mathbf{R}(z)-\mathbf{T}(z) &  =z\mathbf{R}(z)\Psi\left(  \left(
\widetilde{\mathbf{R}}(z)-\widetilde{\mathbf{T}}(z)\right)  ^{T}\right)
\mathbf{T}(z)\label{eq:RminusT}\\
\left(  \widetilde{\mathbf{R}}(z)-\widetilde{\mathbf{T}}(z)\right)  ^{T} &
=zc_{N}\widetilde{\mathbf{R}}^{T}(z)\overline{\Psi}\left(  \mathbb{E}
\mathbf{Q}(z)-\mathbf{T}(z)\right)  \widetilde{\mathbf{T}}^{T}
(z).\label{eq:RtildeminusTtilde}
\end{align}
We introduce the linear operator {$\Phi$}$_{1}$ defined on the set of all
$ML\times ML$ matrices by
\begin{equation}
{\Phi}_{1}(\mathbf{X})=z^{2}\,c_{N}\,\mathbf{R}(z){\Psi}\left(  \widetilde
{\mathbf{R}}^{T}(z)\overline{{\Psi}}(\mathbf{X})\widetilde{\mathbf{T}}
^{T}(z)\right)  \mathbf{T}(z).\label{eq:def-operator-Phi1}
\end{equation}
The operator $\Phi_{1}$ is clearly obtained from operator $\Phi_{0}$ defined
by (\ref{eq:def-Phi0}) by replacing matrices $\mathbf{S}(z)$ and
$\widetilde{\mathbf{S}}(z)$ by matrices $\mathbf{R}(z)$ and $\widetilde
{\mathbf{R}}(z)$. Then, it holds that
\begin{equation}
\mathbf{R}(z)-\mathbf{T}(z)={\Phi}_{1}\left(  \mathbf{R}(z)-\mathbf{T}
(z)\right)  +{\Phi}_{1}\left(  {\boldsymbol\Delta}(z)\right)
.\label{eq:equation-R-T}
\end{equation}
Thus, matrix $\mathbf{R}(z)-\mathbf{T}(z)$ can be interpreted as the solution
of the linear equation (\ref{eq:equation-R-T}). Therefore, in some sense,
showing that $\mathbf{R}(z)-\mathbf{T}(z)$ converges towards $0$ can be proved
by showing that operator $\mathbf{I}-{\boldsymbol\Phi}_{1}$ is invertible, and
that the action of its inverse on ${\Phi}_{1}\left(  {\boldsymbol\Delta
}(z)\right)  $ still converges towards $0$. In this subsection, we implicitely
prove that ${\Phi}_{1}$ is a contractive operator for $z$ well chosen, obtain
that $\frac{1}{ML}\mathrm{tr}\left[  \mathbf{A}\left(  \mathbf{R}
(z)-\mathbf{T}(z)\right)  \right]  =\mathcal{O}(\frac{L}{MN})$ for such $z$,
and use Montel's theorem to conclude that (\ref{eq:convergence-R-T-towards-0})
holds for each $z\in\mathbb{C}\setminus\mathbb{R}^{+}$. \newline

Using (\ref{eq:transpose_ops_gen}), we remark that for each $ML\times ML$
matrices $\mathbf{A}$ and $\mathbf{B}$,
\begin{equation}
\frac{1}{ML}\mathrm{tr}({\Phi}_{1}(\mathbf{B})\mathbf{A}))=\frac{1}
{ML}\mathrm{tr}(\mathbf{B}{\Phi}_{1}^{t}(\mathbf{A}
))\label{eq:implicit-definition-adjoint}
\end{equation}
where ${\Phi}_{1}^{t}$ represents the linear operator defined on the $ML\times
ML$ matrices by
\begin{equation}
{\Phi}_{1}^{t}(\mathbf{A})=z^{2}c_{N}\,{\Psi}\left(  \widetilde{\mathbf{T}
}^{T}\overline{{\Psi}}(\mathbf{T}\mathbf{A}\mathbf{R})\widetilde{\mathbf{R}
}^{T}\right)  \label{eq:def-transpose-Phi1}
\end{equation}
We remark that operator ${\Phi}_{1}^{t}$ is related to the adjoint ${\Phi}
_{1}^{\ast}$ of ${\Phi}_{1}$ w.r.t. the scalar product $<\mathbf{A}
,\mathbf{B}>=\frac{1}{ML}\mathrm{tr}(\mathbf{A}\mathbf{B}^{\ast})$ through the
relation ${\Phi}_{1}^{t}(\mathbf{A})=\left(  {\Phi}_{1}^{\ast}(\mathbf{A}
^{H})\right)  ^{H}$.

If $\mathbf{A}$ is a  $ML\times
ML$ deterministic matrix, (\ref{eq:equation-R-T}) leads to
\begin{equation}
\frac{1}{ML}\mathrm{tr}\left[  \mathbf{A}\left(  \mathbf{R}(z)-\mathbf{T}
(z)\right)  \right]  =\frac{1}{ML}\mathrm{tr}\left[  {\Phi}_{1}^{t}
(\mathbf{A})\left(  \mathbf{R}(z)-\mathbf{T}(z)\right)  \right]  +\frac{1}
{ML}\mathrm{tr}\left[  {\Phi}_{1}^{t}(\mathbf{A})\;{\boldsymbol\Delta
}(z)\right]  \label{eq:expre-tr-R-T}
\end{equation}
Obviously, from the properties of the operators $\Psi($\textperiodcentered$)$
and $\overline{\Psi}($\textperiodcentered$)$ we can write
\begin{align*}
\left\Vert {\Phi}_{1}^{t}(\mathbf{A})\right\Vert  &  \leq\left\vert
z\right\vert ^{2}c_{N}\sup_{m,M,\nu}\left\vert \mathcal{S}_{m}\left(
\nu\right)  \right\vert ^{2}\left\Vert \mathbf{R}(z)\right\Vert \left\Vert
\widetilde{\mathbf{R}}(z)\right\Vert \left\Vert \mathbf{A}\right\Vert
\left\Vert \mathbf{T}(z)\right\Vert \left\Vert \widetilde{\mathbf{T}
}(z)\right\Vert \\
&  \leq\frac{\left\vert z\right\vert ^{2}}{\left(  \delta_{z}\right)  ^{4}
}c_{N}\sup_{m,M,\nu}\left\vert \mathcal{S}_{m}\left(  \nu\right)  \right\vert
^{2}\Vert\mathbf{A}\Vert.
\end{align*}
Let us consider the domain
\[
\mathcal{D}=\left\{  z\in\mathbb{C}\setminus\mathbb{R}^{+}:\frac{\left\vert
z\right\vert ^{2}}{\left(  \delta_{z}\right)  ^{4}}\sup_{N}c_{N}\sup_{m,M,\nu
}\left\vert \mathcal{S}_{m}\left(  \nu\right)  \right\vert ^{2}<\frac{1}
{2}\right\}  .
\]
and define $\alpha(z)$ as
\[
\alpha(z)=\sup_{\Vert\mathbf{B}\Vert\leq1}\left\vert \frac{1}{ML}
\mathrm{tr}\left(  (\mathbf{R}(z)-\mathbf{T}(z))\mathbf{B}\right)
\right\vert
\]
Then, we establish that if $z\in\mathcal{D}$, then $\alpha(z)\leq C(z)\frac
{L}{MN}$. For this, we consider $\mathbf{A}$ such that $\Vert\mathbf{A}
\Vert\leq1$. Using that
\[
\left\vert \frac{1}{ML}\mathrm{tr}\left[  {\Phi}_{1}^{t}(\mathbf{A})\left(
\mathbf{R}(z)-\mathbf{T}(z)\right)  \right]  \right\vert \leq\alpha
(z)\,\Vert{\Phi}_{1}^{t}(\mathbf{A})\Vert
\]
and that $\Vert{\Phi}_{1}(\mathbf{A})\Vert\leq1/2$ if $z\in\mathcal{D}$, we
deduce from (\ref{eq:expre-tr-R-T}) that
\[
\alpha(z)\leq\frac{\alpha(z)}{2}+\sup_{\Vert\mathbf{A}\Vert\leq1}\left\vert
\frac{1}{ML}\mathrm{tr}\left(  {\Phi}_{1}^{t}(\mathbf{A}){\boldsymbol\Delta
}(z)\right)  \right\vert
\]
As $\Vert{\Phi}_{1}^{t}(\mathbf{A})\Vert\leq1/2$ if $\Vert\mathbf{A}\Vert
\leq1$, (\ref{eq:control-trace-Delta}) implies that
\[
\sup_{\Vert\mathbf{A}\Vert\leq1}\left\vert \frac{1}{ML}\mathrm{tr}\left(
{\Phi}_{1}^{t}(\mathbf{A}){\boldsymbol\Delta}(z)\right)  \right\vert \leq
C(z)\frac{L}{MN}
\]
This implies that $\alpha(z)\leq C(z)\frac{L}{MN}$ for each $z\in\mathcal{D}$,
and that for each uniformly bounded sequence of $ML\times ML$ matrices
$\mathbf{A}_{N}$, (\ref{eq:convergence-R-T-towards-0}) holds on $\mathcal{D}$.
Montel's theorem immediately implies that (\ref{eq:convergence-R-T-towards-0})
also holds for each $z\in\mathbb{C}\setminus\mathbb{R}^{+}$.

\subsection{Proof of (\ref{eq:rate-convergence-E(Q)-T-towards-0}).}

We now establish (\ref{eq:rate-convergence-E(Q)-T-towards-0}) for each
$z\in\mathbb{C}\setminus\mathbb{R}^{+}$ under Assumption
(\ref{eq:extra-assumption-L-M-N}). For this, we establish that the linear
equation (\ref{eq:equation-R-T}) can be solved for each $z\in\mathbb{C}
\setminus\mathbb{R}^{+}$. For this, we first prove the following proposition.

\begin{prop}
\label{eq:convergence-series-Phi1} It exists 2 nice polynomials $P_{1}$ and
$P_{2}$ (see Definition \ref{def:nice}) such that for each $ML\times ML$ matrix $\mathbf{X}$, the series
$\sum_{n=0}^{\infty}{\Phi}_{1}^{(n)}(\mathbf{X})$ is convergent when $z$
belongs the set $E_{N}$ defined by
\begin{equation}
E_{N}=\{z\in\mathbb{C}\setminus\mathbb{R}^{+},\frac{L^{3/2}}{MN}
P_{1}(|z|)P_{2}(1/\delta_{z})<1\}\label{eq:def-EN}
\end{equation}

\end{prop}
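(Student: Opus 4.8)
The plan is to transpose the scheme of the proof of Proposition \ref{prop:normPhi0} to the operator $\Phi_{1}$ of (\ref{eq:def-operator-Phi1}), which is block-diagonal valued and is deduced from $\Phi_{0}$ by replacing the pair $(\mathbf{S},\widetilde{\mathbf{S}})$ by $(\mathbf{R},\widetilde{\mathbf{R}})$. First I would introduce the positivity-preserving operator
\[
\Phi_{\mathbf{R}}(\mathbf{X})=c_{N}|z|^{2}\,\mathbf{R}\,\Psi\!\left(\widetilde{\mathbf{R}}^{T}\overline{\Psi}(\mathbf{X})\widetilde{\mathbf{R}}^{\ast}\right)\mathbf{R}^{H},
\]
obtained from $\Phi_{\mathbf{S}}$ of (\ref{eq:defPhi_S}) by the same substitution, and keep $\Phi_{\mathbf{T}^{H}}$ as in (\ref{eq:defPhi_TT}). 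Running the Schur-complement argument of Proposition \ref{prop:normPhi0} verbatim, with $\mathbf{R},\widetilde{\mathbf{R}}$ on the left and $\mathbf{T},\widetilde{\mathbf{T}}$ on the right of $\mathcal{M}_{m}$, produces the analogue of (\ref{eq:fundamental-inequality-Phi0}):
\[
\Phi_{1}^{(n)}(\mathbf{X})\left(\Phi_{\mathbf{T}^{H}}^{(n)}(\mathbf{I})\right)^{-1}\Phi_{1}^{(n)}(\mathbf{X})^{H}\leq\Phi_{\mathbf{R}}^{(n)}(\mathbf{X}\mathbf{X}^{H}),\qquad n\geq0.
\]
Since $\Phi_{\mathbf{T}^{H}}^{(n)}(\mathbf{I})$ is positive definite and $\Phi_{\mathbf{R}}^{(n)}(\mathbf{X}\mathbf{X}^{H})\leq\|\mathbf{X}\|^{2}\Phi_{\mathbf{R}}^{(n)}(\mathbf{I})$, this gives $\|\Phi_{1}^{(n)}(\mathbf{X})\|^{2}\leq\|\mathbf{X}\|^{2}\,\|\Phi_{\mathbf{T}^{H}}^{(n)}(\mathbf{I})\|\,\|\Phi_{\mathbf{R}}^{(n)}(\mathbf{I})\|$, hence $\|\Phi_{1}^{(n)}(\mathbf{X})\|\leq\tfrac{\|\mathbf{X}\|}{2}(\|\Phi_{\mathbf{T}^{H}}^{(n)}(\mathbf{I})\|+\|\Phi_{\mathbf{R}}^{(n)}(\mathbf{I})\|)$. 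It is therefore enough to show that, for $z\in E_{N}$, both $\sum_{n}\|\Phi_{\mathbf{T}^{H}}^{(n)}(\mathbf{I})\|$ and $\sum_{n}\|\Phi_{\mathbf{R}}^{(n)}(\mathbf{I})\|$ are finite; the series $\sum_{n}\Phi_{1}^{(n)}(\mathbf{X})$ then converges absolutely in operator norm.

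The $\Phi_{\mathbf{T}^{H}}$ series requires no hypothesis on $z$: since $(\mathbf{T},\widetilde{\mathbf{T}})$ is the solution of the canonical equation, the Lemma preceding Remark \ref{re:alpha} gives $\sum_{n}\Phi_{\mathbf{T}^{H}}^{(n)}(\mathbf{I})<+\infty$, and as the terms are positive, $\sum_{n}\|\Phi_{\mathbf{T}^{H}}^{(n)}(\mathbf{I})\|\leq\mathrm{tr}\!\left(\sum_{n}\Phi_{\mathbf{T}^{H}}^{(n)}(\mathbf{I})\right)<+\infty$.

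The crux is the $\Phi_{\mathbf{R}}$ series, and this is where $E_{N}$ comes in: the Lemma above is unavailable because $(\mathbf{R},\widetilde{\mathbf{R}})$ does not solve the canonical system. Instead I would derive a \emph{perturbed} self-consistency identity. Carrying out, for $\mathbf{R}$ and $\widetilde{\mathbf{R}}$, the computations of $\mathrm{Im}\,\mathbf{R}$ and $\mathrm{Im}(z\widetilde{\mathbf{R}})$ made in the proof of Lemma \ref{lemma:invertibility_RRtilde}, then inserting $\mathbb{E}\mathbf{Q}=\mathbf{R}+\boldsymbol\Delta$, one gets --- after the transposition bookkeeping for $\Psi$ and $\overline{\Psi}$ already used in the paper, whose transposes are operators of the same type (spectral densities $\nu\mapsto\mathcal{S}_{m}(1-\nu)$), hence positivity-preserving and satisfying (\ref{eq:upperbound-S})--(\ref{eq:lowerbound-S}) --- an identity of the form
\[
\frac{\mathrm{Im}\,\mathbf{R}}{\mathrm{Im}\,z}=\mathbf{R}\mathbf{R}^{H}+\Phi_{\mathbf{R}}\!\left(\frac{\mathrm{Im}\,\mathbf{R}}{\mathrm{Im}\,z}\right)+\Phi_{\mathbf{R}}\!\left(\frac{\mathrm{Im}\,\boldsymbol\Delta}{\mathrm{Im}\,z}\right),
\]
valid for $z\notin\mathbb{R}$, with the case $z\in\mathbb{R}^{-\ast}$ handled as in that proof by reading $\mathrm{Im}(\cdot)/\mathrm{Im}\,z$ as $\int_{\mathbb{R}^{+}}d\boldsymbol\nu(\lambda)/(\lambda-z)^{2}$. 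Because $\Phi_{\mathbf{R}}$ carries a factor $\overline{\Psi}(\cdot)$ in its argument, the last term obeys $\|\Phi_{\mathbf{R}}(\mathrm{Im}\,\boldsymbol\Delta/\mathrm{Im}\,z)\|\leq\frac{c_{N}|z|^{2}\sup_{m,\nu}|\mathcal{S}_{m}(\nu)|}{\delta_{z}^{4}}\,\|\overline{\Psi}(\mathrm{Im}\,\boldsymbol\Delta/\mathrm{Im}\,z)\|$, and $\|\overline{\Psi}(\mathrm{Im}\,\boldsymbol\Delta/\mathrm{Im}\,z)\|\leq C(z)\tfrac{L^{3/2}}{MN}$ follows from Corollary \ref{cor:norm-psibar-delta} (using $\boldsymbol\Delta(z)^{H}=\boldsymbol\Delta(\bar z)$, plus a Cauchy integral over a circle of radius comparable to $\delta_{z}$ to control the divided difference when $z$ is close to $\mathbb{R}$). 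Combined with the lower bound (\ref{eq:lower-bound-RR*}), this yields
\[
\mathbf{R}\mathbf{R}^{H}+\Phi_{\mathbf{R}}\!\left(\frac{\mathrm{Im}\,\boldsymbol\Delta}{\mathrm{Im}\,z}\right)\geq\left(\frac{\delta_{z}^{2}}{16(\eta^{2}+|z|^{2})^{2}}-C(z)\frac{L^{3/2}}{MN}\right)\mathbf{I}.
\]
One now fixes the nice polynomials $P_{1},P_{2}$ in (\ref{eq:def-EN}) large enough that $z\in E_{N}$ forces $C(z)\frac{L^{3/2}}{MN}\leq\tfrac12\frac{\delta_{z}^{2}}{16(\eta^{2}+|z|^{2})^{2}}$, so that the right-hand side is $\geq c(z)\mathbf{I}$ with $c(z)=\frac{\delta_{z}^{2}}{32(\eta^{2}+|z|^{2})^{2}}>0$. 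Iterating the perturbed identity $n+1$ times, using the positivity of $\Phi_{\mathbf{R}}$ and $\mathrm{Im}\,\mathbf{R}/\mathrm{Im}\,z\geq0$, gives $c(z)\sum_{k=0}^{n}\Phi_{\mathbf{R}}^{(k)}(\mathbf{I})\leq\mathrm{Im}\,\mathbf{R}/\mathrm{Im}\,z\leq\delta_{z}^{-2}\mathbf{I}$ (the last step because $\mathbf{R}\in\mathcal{S}_{ML}(\mathbb{R}^{+})$), whence $\sum_{n}\Phi_{\mathbf{R}}^{(n)}(\mathbf{I})\leq\frac{1}{c(z)\delta_{z}^{2}}\mathbf{I}$ and $\sum_{n}\|\Phi_{\mathbf{R}}^{(n)}(\mathbf{I})\|\leq\mathrm{tr}\!\left(\sum_{n}\Phi_{\mathbf{R}}^{(n)}(\mathbf{I})\right)\leq\frac{ML}{c(z)\delta_{z}^{2}}<+\infty$. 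With the previous two paragraphs this proves the proposition.

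The hard part will be the perturbed identity of the last paragraph: unlike $\mathbf{T}$, the pair $(\mathbf{R},\widetilde{\mathbf{R}})$ only solves the canonical system up to the defect $\overline{\Psi}(\boldsymbol\Delta)$, and one must verify that the extra term is genuinely $\Phi_{\mathbf{R}}$ of a matrix whose $\overline{\Psi}$-image is $\mathcal{O}(L^{3/2}/(MN))$ --- it is exactly the magnitude of this defect, weighed against (\ref{eq:lower-bound-RR*}), that dictates the polynomials defining $E_{N}$. A secondary, purely notational, nuisance is to match the positivity-preserving operator produced by the Schur-complement step with the one appearing in the identity for $\mathrm{Im}\,\mathbf{R}$; they coincide up to transpositions and factor reorderings that affect neither positivity nor the operator-norm bound, so the same estimate on $\Phi_{\mathbf{R}}^{(n)}(\mathbf{I})$ serves in both places.
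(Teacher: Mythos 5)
Your proposal is correct and follows, in substance, the paper's own strategy: the Schur--complement inequality ${\Phi}_{1}^{(n)}(\mathbf{X})\bigl({\Phi}_{\mathbf{T}^{H}}^{(n)}(\mathbf{I})\bigr)^{-1}{\Phi}_{1}^{(n)}(\mathbf{X})^{H}\leq{\Phi}_{\mathbf{R}}^{(n)}(\mathbf{X}\mathbf{X}^{H})$, the already-acquired summability of $\sum_{n}{\Phi}_{\mathbf{T}^{H}}^{(n)}(\mathbf{I})$, and a perturbed self-consistency identity in which the defect $\boldsymbol\Delta$ enters only through ${\Phi}_{\mathbf{R}}$ (equivalently $\overline{\Psi}$) applied to $\mathrm{Im}\,\boldsymbol\Delta/\mathrm{Im}\,z$; you correctly identified that this last point is exactly what buys the $L^{3/2}/(MN)$ scale defining $E_{N}$, whereas measuring $\mathrm{Im}\,\boldsymbol\Delta/\mathrm{Im}\,z$ in raw norm would only give $\mathcal{O}((L/M^{3})^{1/2})$ and force a stronger hypothesis --- this is precisely the modification the paper makes by iterating on ${\Phi}_{\mathbf{R}}\bigl(\mathrm{Im}\,\mathbb{E}\mathbf{Q}/\mathrm{Im}\,z\bigr)$ with source ${\Phi}_{\mathbf{R}}\bigl(\mathbf{R}\mathbf{R}^{H}+\mathrm{Im}\,\boldsymbol\Delta/\mathrm{Im}\,z\bigr)$, while you keep the unknown $\mathrm{Im}\,\mathbf{R}/\mathrm{Im}\,z$ and move ${\Phi}_{\mathbf{R}}(\mathrm{Im}\,\boldsymbol\Delta/\mathrm{Im}\,z)$ into the source; the two bookkeepings are equivalent (yours needs only the lower bound (\ref{eq:lower-bound-RR*}) plus the upper bound $\mathrm{Im}\,\mathbf{R}/\mathrm{Im}\,z\leq\delta_{z}^{-2}\mathbf{I}$, the paper's additionally uses (\ref{eq:lowerbound-S}) and (\ref{eq:lower-bound-tildeRtildeR*})). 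The one genuinely different ingredient is your proof of the key estimate $\Vert\overline{\Psi}(\mathrm{Im}\,\boldsymbol\Delta/\mathrm{Im}\,z)\Vert\leq C(z)L^{3/2}/(MN)$: the paper states this as Lemma \ref{le:convergence-rate-ImDelta-overImz} and proves it by adapting the probabilistic bilinear-form arguments of Lemma B-1 of \cite{hachem-loubaton-najim-vallet-ihp-2013}, whereas you deduce it from Corollary \ref{cor:norm-psibar-delta} using $\boldsymbol\Delta(z)^{H}=\boldsymbol\Delta(\bar z)$ and a Cauchy representation of the divided difference $(\boldsymbol\Delta(z)-\boldsymbol\Delta(\bar z))/(z-\bar z)$ over a circle of radius comparable to $\delta_{z}$. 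This is a legitimate and arguably cleaner route, since it recycles the already-proved bound instead of redoing Poincar\'e--Nash-type estimates; to make it airtight you should spell out the case split ($|\mathrm{Im}\,z|$ large versus small compared with $\delta_{z}$, the naive bound $2C(z)\delta_{z}^{-1}L^{3/2}/(MN)$ sufficing in the first case), check that the contour stays in $\mathbb{C}\setminus\mathbb{R}^{+}$ with $\delta_{\zeta}\geq\delta_{z}/4$ and $|\zeta|\leq 2|z|$ so that $C(\zeta)$ is dominated by a nice $C(z)$, and note that the real-negative case reduces to $\boldsymbol\Delta'(z)$ as in the paper. Two harmless remarks: your operator-norm summability via $\Vert\cdot\Vert\leq\mathrm{tr}(\cdot)$ introduces a dimension-dependent constant, which is fine for the convergence claim of this proposition (though not for the later uniform bound (\ref{eq:bornitude-series}), which the paper treats separately via the bilinear-form estimates you could equally have used here); and the transposition discrepancies you flag at the end are indeed only bookkeeping, handled at the same level of detail in the paper itself.
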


In order to establish Proposition \ref{eq:convergence-series-Phi1}, we first
remark that for each matrix $\mathbf{X}$, it holds that
\begin{equation}
{\Phi}_{1}^{(n)}(\mathbf{X})\left(  {\Phi}_{\mathbf{T}^{H}}^{(n)}
(\mathbf{I})\right)  ^{-1}\left(  {\Phi}_{1}^{(n)}(\mathbf{X})\right)
^{H}\leq{\Phi}_{\mathbf{R}}^{(n)}(\mathbf{X}\mathbf{X}^{H}
)\label{eq:inequality-fundamental-Phi1}
\end{equation}
where ${\Phi}_{\mathbf{T}^{H}}($\textperiodcentered$)$ is defined in
(\ref{eq:defPhi_TT}) and where ${\Phi}_{\mathbf{R}}\left(
\text{\textperiodcentered}\right)  $ is as in (\ref{eq:defPhi_S}) replacing
$\mathbf{S}$ with $\mathbf{R}$.This inequality is proved in the same way as
(\ref{eq:fundamental-inequality-Phi0}). It has already been proved that
$\sum_{n=0}{\Phi}_{\mathbf{T}^{H}}^{(n)}(\mathbf{I})$ is convergent. Following
the proof of the uniqueness in Proposition \ref{proposition:existence_unicity}
, we will obtain the convergence of the series $\sum_{n=0}^{\infty}{\Phi}
_{1}^{(n)}(\mathbf{X})$, i.e. that
\[
\sum_{n=0}^{+\infty}\left\vert \mathbf{a}^{H}{\Phi}_{1}^{(n)}(\mathbf{X}
)\mathbf{b}\right\vert <+\infty
\]
if we are able to establish that
\begin{equation}
\sum_{n=0}^{+\infty}{\Phi}_{\mathbf{R}}^{(n)}(\mathbf{X}\mathbf{X}
^{H})<+\infty.\label{eq:convergence-series-Phi1n}
\end{equation}
When $\operatorname{Im}z\neq0$, we can write
\[
\frac{\mathrm{Im}(\mathbf{R})}{\mathrm{Im}(z)}=\mathbf{R}\mathbf{R}^{H}+{\Phi
}_{\mathbf{R}}\left(  \frac{\mathrm{Im}(\mathbb{E}(\mathbf{Q)})}
{\mathrm{Im}(z)}\right)
\]
Therefore, it holds that
\begin{equation}
\frac{\mathrm{Im}(\mathbb{E}(\mathbf{Q}))}{\mathrm{Im}(z)}=\mathbf{R}
\mathbf{R}^{H}+\frac{\mathrm{Im}({\boldsymbol\Delta})}{\mathrm{Im}(z)}+{\Phi
}_{\mathbf{R}}\left(  \frac{\mathrm{Im}(\mathbb{E}(\mathbf{Q)})}
{\mathrm{Im}(z)}\right)  .\label{eq:equation-ImE(Q)}
\end{equation}
When $z\in\mathbf{R}^{-\ast}$, we can still interpret $\frac{\mathrm{Im}
(\mathbf{R})}{\mathrm{Im}(z)}$ and $\frac{\mathrm{Im}(\mathbb{E}(\mathbf{Q}
))}{\mathrm{Im}(z)}$ as $\mathbf{R}^{\prime}(z)$ and $\mathbb{E}
(\mathbf{Q}^{\prime}(z))$ and the following reasoning holds as well. In order
to use the ideas of the proof of the uniqueness in Proposition
\ref{proposition:existence_unicity}, matrix $\mathbf{R}\mathbf{R}^{H}
+\frac{\mathrm{Im}({\boldsymbol\Delta})}{\mathrm{Im}(z)}$ should be positive.
By (\ref{eq:lower-bound-RR*}), matrix $\mathbf{R}\mathbf{R}^{H}$ verifies
$\mathbf{R}\mathbf{R}^{H}\geq\frac{\delta_{z}^{2}}{Q_{2}(|z|)}\mathbf{I}$ for
some nice polynomial $Q_{2}$. In order to guarantee the positivity of
$\mathbf{R}\mathbf{R}^{H}+\frac{\mathrm{Im}({\boldsymbol\Delta})}
{\mathrm{Im}(z)}$ on a large subset of $\mathbb{C} \setminus \mathbb{R}^{+}$,
condition $\Vert\frac{\mathrm{Im}({\boldsymbol\Delta})}{\mathrm{Im}(z)}
\Vert\rightarrow0$ should hold. However, it can be shown that the rate of
convergence of $\Vert\frac{\mathrm{Im}({\boldsymbol\Delta})}{\mathrm{Im}
(z)}\Vert$ is $\mathcal{O}\left(  (L/M^{3})^{1/2}\right)  $. Assuming that
$L/M^{3}\rightarrow0$ is a stronger condition than Assumption
(\ref{eq:extra-assumption-L-M-N}) which is equivalent to $L/M^{4}\rightarrow
0$. Therefore, we have to modify the proof of the uniqueness in Proposition
\ref{proposition:existence_unicity}. Instead of considering Eq.
(\ref{eq:equation-ImE(Q)}), we consider
\begin{equation}
{\Phi}_{\mathbf{R}}\left(  \frac{\mathrm{Im}(\mathbb{E}(\mathbf{Q}
))}{\mathrm{Im}(z)}\right)  ={\Phi}_{\mathbf{R}}\left(  \mathbf{R}
\mathbf{R}^{H}+\frac{\mathrm{Im}({\boldsymbol\Delta})}{\mathrm{Im}(z)}\right)
+{\Phi}_{\mathbf{R}}^{(2)}\left(  \frac{\mathrm{Im}(\mathbb{E}(\mathbf{Q)}
)}{\mathrm{Im}(z)}\right)  
\end{equation}
This time, we will see that $\Phi_{\mathbf{R}}\left(  \mathbf{R}
\mathbf{R}^{\ast}+\frac{\mathrm{Im}({\boldsymbol\Delta})}{\mathrm{Im}
(z)}\right)  \geq \frac{\delta_{z}^{2}}{Q_{2}(|z|)}\mathbf{I}$ when $z$
belongs to a set $E_{N}$ defined by (\ref{eq:def-EN}), and this property
allows to prove (\ref{eq:convergence-series-Phi1n}) if $z\in E_{N}$. In order
to establish this, we first state the following Lemma.

\begin{lem}
\label{le:convergence-rate-ImDelta-overImz} It holds that
\begin{equation}
\Vert\overline{{\Psi}}\left(  \frac{\mathrm{Im}({\boldsymbol\Delta}
)}{\mathrm{Im}(z)}\right)  \Vert\leq C(z)\frac{L^{3/2}}{MN}
\label{eq:convergence-rate-ImDelta-overImz}
\end{equation}

\end{lem}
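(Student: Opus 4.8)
The plan is \emph{not} to estimate $\Vert\mathrm{Im}({\boldsymbol\Delta})/\mathrm{Im}(z)\Vert$ itself --- as already noted, that quantity is only $\mathcal{O}((L/M^{3})^{1/2})$, and in any case its norm is not dominated by that of ${\boldsymbol\Delta}$ --- but to transport the estimate of Corollary \ref{cor:norm-psibar-delta} through the operator $\mathrm{Im}(\cdot)$ at the level of the single $N\times N$ function $\mathbf{F}(z):=\overline{\Psi}({\boldsymbol\Delta}(z))$. First I would record that $\overline{\Psi}$ commutes with Hermitian conjugation: from the integral (frequency) representation of $\Psi_{N}^{(m)}$ and the reality of $\mathcal{S}_{m}$ one has $\Psi_{N}^{(m)}(\mathbf{M})^{H}=\Psi_{N}^{(m)}(\mathbf{M}^{H})$, and since $(\mathbf{M}^{H})^{m,m}=(\mathbf{M}^{m,m})^{H}$ this gives $\overline{\Psi}(\mathbf{M})^{H}=\overline{\Psi}(\mathbf{M}^{H})$, hence $\overline{\Psi}(\mathrm{Im}\,\mathbf{M})=\mathrm{Im}(\overline{\Psi}(\mathbf{M}))$. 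Since $\mathbb{E}\mathbf{Q}(z)$ (the Stieltjes transform of the mean spectral measure of $\mathbf{W}\mathbf{W}^{H}$) and $\mathbf{R}(z)$ both belong to $\mathcal{S}_{ML}(\mathbb{R}^{+})$, one has ${\boldsymbol\Delta}(z^{\ast})={\boldsymbol\Delta}(z)^{H}$, so $\mathbf{F}(z^{\ast})=\mathbf{F}(z)^{H}$ and, for $\mathrm{Im}(z)\neq0$,
\[
\overline{\Psi}\!\left(\frac{\mathrm{Im}({\boldsymbol\Delta}(z))}{\mathrm{Im}(z)}\right)=\frac{\mathrm{Im}(\mathbf{F}(z))}{\mathrm{Im}(z)}=\frac{\mathbf{F}(z)-\mathbf{F}(z^{\ast})}{z-z^{\ast}},
\]
while for $z\in\mathbb{R}^{-\ast}$ the left-hand side equals $\mathbf{F}'(z)$ (by the convention in force for $\mathrm{Im}({\boldsymbol\Delta})/\mathrm{Im}(z)$ at real points, $\overline{\Psi}$ being linear and $z$-independent). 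Now $\mathbf{F}$ is holomorphic on $\mathbb{C}\setminus\mathbb{R}^{+}$ (as $\overline{\Psi}$ is a fixed continuous linear map and $\mathbb{E}\mathbf{Q}$, $\mathbf{R}$ are holomorphic there, by Lemma \ref{lemma:invertibility_RRtilde} for $\mathbf{R}$), and by Corollary \ref{cor:norm-psibar-delta} it satisfies $\Vert\mathbf{F}(\zeta)\Vert\leq C(\zeta)\frac{L^{3/2}}{MN}$ with $C(\zeta)=P_{1}(|\zeta|)P_{2}(1/\delta_{\zeta})$ for every $\zeta\in\mathbb{C}\setminus\mathbb{R}^{+}$; note in addition $\Vert\mathbf{F}(z^{\ast})\Vert=\Vert\mathbf{F}(z)\Vert$ and $|z^{\ast}|=|z|$, $\delta_{z^{\ast}}=\delta_{z}$.

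It then remains to transfer this pointwise bound to $(\mathbf{F}(z)-\mathbf{F}(z^{\ast}))/(z-z^{\ast})$ by Cauchy's inequalities, distinguishing two regimes. If $8|\mathrm{Im}(z)|\geq\delta_{z}$ --- which in particular covers every $z$ with $\mathrm{Re}(z)\geq0$, since then $\delta_{z}=|\mathrm{Im}(z)|$ --- the triangle inequality already suffices: $\Vert(\mathbf{F}(z)-\mathbf{F}(z^{\ast}))/(z-z^{\ast})\Vert\leq(\Vert\mathbf{F}(z)\Vert+\Vert\mathbf{F}(z^{\ast})\Vert)/(2|\mathrm{Im}(z)|)\leq 8\,C(z)\,\delta_{z}^{-1}\frac{L^{3/2}}{MN}$. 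If $8|\mathrm{Im}(z)|<\delta_{z}$ --- which forces $\mathrm{Re}(z)<0$, hence $\delta_{z}=|z|=\delta_{z^{\ast}}$ --- I would expand $\mathbf{F}$ in its Taylor series about $z^{\ast}$, which converges on $\{\,|\omega-z^{\ast}|<\delta_{z}\,\}\subset\mathbb{C}\setminus\mathbb{R}^{+}$, so that $\frac{\mathbf{F}(z)-\mathbf{F}(z^{\ast})}{z-z^{\ast}}=\sum_{k\geq1}\frac{\mathbf{F}^{(k)}(z^{\ast})}{k!}(z-z^{\ast})^{k-1}$; bounding $\Vert\mathbf{F}^{(k)}(z^{\ast})\Vert/k!\leq(2/\delta_{z})^{k}M_{\ast}$ by Cauchy's inequality on $|\omega-z^{\ast}|=\delta_{z}/2$, with $M_{\ast}:=\sup_{|\omega-z^{\ast}|=\delta_{z}/2}\Vert\mathbf{F}(\omega)\Vert$, and summing the geometric series (ratio $2|z-z^{\ast}|/\delta_{z}=4|\mathrm{Im}(z)|/\delta_{z}<1/2$) gives $\Vert(\mathbf{F}(z)-\mathbf{F}(z^{\ast}))/(z-z^{\ast})\Vert\leq 4\,M_{\ast}\,\delta_{z}^{-1}$. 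The case $z\in\mathbb{R}^{-\ast}$ is identical: $\Vert\mathbf{F}'(z)\Vert\leq 2\,\delta_{z}^{-1}\sup_{|\omega-z|=\delta_{z}/2}\Vert\mathbf{F}(\omega)\Vert$ by Cauchy's inequality on $|\omega-z|=\delta_{z}/2$.

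To conclude, on each of the circles $|\omega-z^{\ast}|=\delta_{z}/2$ and $|\omega-z|=\delta_{z}/2$ one has $|\omega|\leq\frac{3}{2}|z|$ (using $\delta_{z}\leq|z|$) and $\delta_{\omega}\geq\delta_{z}/2$, hence $\Vert\mathbf{F}(\omega)\Vert\leq C(\omega)\frac{L^{3/2}}{MN}\leq P_{1}(\tfrac{3}{2}|z|)P_{2}(2/\delta_{z})\frac{L^{3/2}}{MN}$; gathering the three cases and absorbing the prefactor $\delta_{z}^{-1}$ and the numerical constants into $P_{2}$ (the nice polynomials may be assumed to have non-negative coefficients) yields $\Vert\overline{\Psi}(\mathrm{Im}({\boldsymbol\Delta})/\mathrm{Im}(z))\Vert\leq C(z)\frac{L^{3/2}}{MN}$ with a new $C(z)=P_{1}(|z|)P_{2}(1/\delta_{z})$, which is the claim. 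The one genuine obstacle --- the point one must handle with care --- is the region near $\mathbb{R}^{-}$, where $|\mathrm{Im}(z)|$ can be arbitrarily small while $\delta_{z}$ stays bounded below: there the naive difference-quotient estimate produces a spurious factor $1/|\mathrm{Im}(z)|$, and it is precisely the holomorphy of $\mathbf{F}$ on the \emph{entire} disk of radius $\delta_{z}$ about $z^{\ast}$ (not merely on a small disk) that allows the Cauchy/Taylor argument to recover the correct dependence $P_{1}(|z|)P_{2}(1/\delta_{z})$.
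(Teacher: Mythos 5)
Your proof is correct, but it takes a genuinely different route from the paper's. The paper disposes of this lemma in a few lines: exactly as in Corollary \ref{cor:norm-psibar-delta}, it reduces the operator-norm bound to a uniform estimate on the bilinear forms $\mathbf{b}_{1}^{H}\bigl(\frac{1}{M}\sum_{m}d_{m}\,\mathrm{Im}({\boldsymbol\Delta}^{m,m})/\mathrm{Im}(z)\bigr)\mathbf{b}_{2}$, i.e.\ to the analogue of (\ref{eq:controle-trace-partielle-Delta}) for $\mathrm{Im}({\boldsymbol\Delta})/\mathrm{Im}(z)$, and then invokes a further Gaussian computation (an adaptation of Lemma B-1 of \cite{hachem-loubaton-najim-vallet-ihp-2013}, i.e.\ integration by parts and Poincar\'e--Nash estimates) which it does not spell out. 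You instead deduce the lemma purely deterministically from Corollary \ref{cor:norm-psibar-delta}: setting $\mathbf{F}=\overline{\Psi}({\boldsymbol\Delta})$, you use $\overline{\Psi}(\mathbf{M}^{H})=\overline{\Psi}(\mathbf{M})^{H}$ and ${\boldsymbol\Delta}(z^{\ast})={\boldsymbol\Delta}(z)^{H}$ (both legitimate here, since $\mathbb{E}\mathbf{Q}$ and $\mathbf{R}$ lie in $\mathcal{S}_{ML}(\mathbb{R}^{+})$ and $\mathcal{S}_{m}$ is real) to identify the quantity with the difference quotient $(\mathbf{F}(z)-\mathbf{F}(z^{\ast}))/(z-z^{\ast})$, or $\mathbf{F}'(z)$ on $\mathbb{R}^{-\ast}$, and then exploit holomorphy of $\mathbf{F}$ on $\mathbb{C}\setminus\mathbb{R}^{+}$ with Cauchy estimates on a disk of radius $\delta_{z}/2$ to kill the spurious $1/|\mathrm{Im}(z)|$ near the negative real axis; the only cost is evaluating the nice polynomials at $\tfrac{3}{2}|z|$ and $2/\delta_{z}$ plus one extra power of $1/\delta_{z}$, so the bound retains the required form $C(z)=P_{1}(|z|)P_{2}(1/\delta_{z})$, and the regime split at $8|\mathrm{Im}(z)|\gtrless\delta_{z}$ together with the $\mathbb{R}^{-\ast}$ case (interpreted, as in the paper, as ${\boldsymbol\Delta}'(z)$, which indeed equals $\mathbf{F}'(z)$ after applying the fixed linear map $\overline{\Psi}$) covers all of $\mathbb{C}\setminus\mathbb{R}^{+}$. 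What your route buys is a self-contained argument needing no new probabilistic estimate beyond what is already proved in Section 4; what the paper's route buys is a bound obtained by the same Gaussian machinery used throughout and, in the cited reference's form, control of more general resolvent bilinear forms of $\mathbf{Q}\mathbf{Q}^{H}$-type rather than only the $\overline{\Psi}$-compression needed here.
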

\begin{proof}
Following the proof of Corollary \ref{cor:norm-psibar-delta}, it is sufficient
to establish that if $(\mathbf{b}_{1,N})_{N \geq 1}$ and $(\mathbf{b}
_{2,N})_{N \geq 1}$ are two sequences of $L$ dimensional vectors such
that $\sup_{N}\Vert\mathbf{b}_{i,N}\Vert<b<+\infty$ for $i=1,2$, and if
$((d_{m,N})_{m=1,\ldots, M})_{N\geq1}$ are deterministic complex number verifying
$\sup_{N,m}|d_{m,N}|<d<+\infty$, then, it holds that
\begin{equation}
\left\vert \mathbf{b}_{1,N}^{H}\left(  \frac{1}{M}\sum_{m=1}^{M}d_{m,N}
\frac{\mathrm{Im}\left(  {\boldsymbol\Delta}^{m,m}\right)  }{\mathrm{Im}
(z)}\right)  \mathbf{b}_{2,N}\right\vert \leq d\,b^{2}\,P_{1}(|z|)P_{2}
(1/\delta_{z})\frac{L^{3/2}}{MN}
\label{eq:controle-trace-partielle-ImDelta-overImz}
\end{equation}
for some nice polynomials $P_{1}$ and $P_{2}$.
(\ref{eq:controle-trace-partielle-ImDelta-overImz}) can be established by
adapting in a straightforward way the arguments of the proof of Lemma B-1 of
\cite{hachem-loubaton-najim-vallet-ihp-2013}. This completes the proof of the
Lemma.
\end{proof}

Using the definition of operator $\Phi_{\mathbf{R}}$ and the properties of
$\mathbf{R}$ and $\widetilde{\mathbf{R}}$, we deduce from Lemma
\ref{le:convergence-rate-ImDelta-overImz} that
\[
\left\Vert {\Phi}_{\mathbf{R}}\left(  \frac{\mathrm{Im}({\boldsymbol\Delta}
)}{\mathrm{Im}(z)}\right)  \right\Vert \leq C(z)\frac{L^{3/2}}{MN}
\]
when $z\in\mathbb{E}_{N}$. As $\mathbf{R}(z)\mathbf{R}^{H}(z)\geq\frac
{\delta_{z}^{2}}{Q_{2}(|z|)}\mathbf{I}_{ML}$, it holds that $\overline{{\Psi}
}(\mathbf{R}(z)\mathbf{R}^{H}(z))\geq\frac{\delta_{z}^{2}}{Q_{2}
(|z|)}\overline{{\Psi}}(\mathbf{I}_{ML})$. Since all spectral densities
$(\mathcal{S}_{m})$ verify (\ref{eq:lowerbound-S}), it is clear that
$\overline{{\Psi}}(\mathbf{I}_{ML})\geq C\;\mathbf{I}_{N}$ for some nice
constant $C$. Therefore, it appears that
\[
\overline{{\Psi}}(\mathbf{R}(z)\mathbf{R}^{H}(z))\geq C\,\frac{\delta_{z}^{2}
}{Q_{2}(|z|)}\mathbf{I}_{N}
\]
Lemma \ref{le:convergence-rate-ImDelta-overImz} thus implies that
\[
\overline{{\Psi}}\left(  \mathbf{R}(z)\mathbf{R}(z)^{H}+\frac{\mathrm{Im}
({\boldsymbol\Delta})}{\mathrm{Im}(z)}\right)  \geq C/2\,\frac{\delta_{z}^{2}
}{Q_{2}(|z|)}\mathbf{I}_{N}
\]
on the set $E_{N}$ defined by
\[
E_{N}=\{z\in\mathbb{C}\setminus\mathbb{R}^{+},C(z)\frac{L^{3/2}}{MN}\leq
C/2\frac{\delta_{z}^{2}}{Q_{2}(|z|)}\}
\]
which can be written as in (\ref{eq:def-EN}). Since matrix $\widetilde
{\mathbf{R}}^{T}(z)\widetilde{\mathbf{R}}^{\ast}(z)$ is also greater than
$\frac{\delta_{z}^{2}}{Q_{2}(|z|)}$ for some nice polynomial $Q_{2}$ (see
(\ref{eq:lower-bound-tildeRtildeR*})), we eventually obtain that
\begin{equation}
{\Phi}_{\mathbf{R}}\left(  \mathbf{R}(z)\mathbf{R}(z)^{H}+\frac{\mathrm{Im}
({\boldsymbol\Delta})}{\mathrm{Im}(z)}\right)  \geq\frac{\delta_{z}^{6}}
{Q_{2}(|z|)}\mathbf{I}_{ML}\label{eq:phiR-bounded-from-below}
\end{equation}
for each $z\in\mathbb{E}_{N}$. Using the same arguments than in the proof of
the uniqueness in Proposition \ref{proposition:existence_unicity}, we obtain
that ${\Phi}_{\mathbf{R}}^{(n)}\left(  \mathbf{R}(z)\mathbf{R}^{H}
(z)+\frac{\mathrm{Im}({\boldsymbol\Delta})}{\mathrm{Im}(z)}\right)
\rightarrow0$ when $n\rightarrow0$, and that
\[
\Phi_{\mathbf{R}}\left(  \frac{\mathrm{Im}(\mathbb{E}(\mathbf{Q}
))}{\mathrm{Im}(z)}\right)  =\sum_{n=0}^{+\infty}\Phi_{\mathbf{R}}
^{(n)}\left(  {\Phi}_{\mathbf{R}}\left(  \mathbf{R}(z)\mathbf{R}^{H}
(z)+\frac{\mathrm{Im}({\boldsymbol\Delta})}{\mathrm{Im}(z)}\right)  \right)
\]
when $z\in E_{N}$. Using (\ref{eq:phiR-bounded-from-below}) as well as
\[
\Phi_{\mathbf{R}}\left(  \frac{\mathrm{Im}(\mathbb{E}(\mathbf{Q}
))}{\mathrm{Im}(z)}\right)  \leq C(z)\;\mathbf{I}
\]
we eventually obtain that
\[
\sum_{n=0}^{+\infty}\Phi_{\mathbf{R}}^{(n)}(\mathbf{I})\leq C(z)\,\mathbf{I}
\]
when $z\in E_{N}$. Therefore, for each $ML\times ML$ matrix $\mathbf{X}$, it
holds that
\[
\sum_{n=0}^{+\infty}\Phi_{\mathbf{R}}^{(n)}(\mathbf{X}\mathbf{X}^{\ast}
)\leq\Vert\mathbf{X}\Vert^{2}\sum_{n=0}^{+\infty}\Phi_{\mathbf{R}}
^{(n)}(\mathbf{I})\leq C(z)\,\Vert\mathbf{X}\Vert^{2}\,\mathbf{I}<+\infty
\]
In order to complete the proof of Proposition \ref{eq:convergence-series-Phi1}
, we just follow the proof of unicity of Proposition
\ref{proposition:existence_unicity}. We express $\mathbf{a}^{H}{\Phi}
_{1}(\mathbf{X})\mathbf{b}$ as
\[
\mathbf{a}^{H}{\Phi}_{1}^{(n)}(\mathbf{X})\mathbf{b}=\mathbf{a}^{H}{\Phi}
_{1}^{(n)}(\mathbf{X})\left[  {\Phi}_{\mathbf{T}^{H}}^{(n)}(\mathbf{I}
)\right]  ^{-1/2}\left[  {\Phi}_{\mathbf{T}^{H}}^{(n)}(\mathbf{I})\right]
^{1/2}\mathbf{b}
\]
use the Schwartz inequality as well as (\ref{eq:inequality-fundamental-Phi1}),
and obtain that
\[
|\mathbf{a}^{H}{\Phi}_{1}^{(n)}(\mathbf{X})\mathbf{b}|\leq\left(
\mathbf{a}^{H}{\Phi}_{\mathbf{R}}^{(n)}(\mathbf{X}\mathbf{X}^{\ast}
)\mathbf{a}\right)  ^{1/2}\left(  \mathbf{b}^{H}{\Phi}_{\mathbf{T^{H}}}
^{(n)}(\mathbf{I})\mathbf{b}\right)  ^{1/2}
\]
and that
\[
\sum_{n=0}^{+\infty}|\mathbf{a}^{H}{\Phi}_{1}^{(n)}(\mathbf{X})\mathbf{b}
|\leq\left(  \sum_{n=0}^{+\infty}\mathbf{a}^{H}{\Phi}_{\mathbf{R}}
^{(n)}(\mathbf{X}\mathbf{X}^{\ast})\mathbf{a}\right)  ^{1/2}\;\left(
\sum_{n=0}^{+\infty}\mathbf{b}^{H}{\Phi}_{\mathbf{T^{H}}}^{(n)}(\mathbf{I}
)\mathbf{b}\right)  ^{1/2}<+\infty
\]
as expected. \newline

We are now in position to complete the proof of
(\ref{eq:rate-convergence-R-T-towards-0}). For this, we consider a uniformly
bounded sequence of $ML\times ML$ matrices $\mathbf{A}_{N}$ and evaluate
$\frac{1}{ML}\mathrm{tr}\left(  \mathbf{A}_N(\mathbf{R}(z)-\mathbf{T}
(z))\right)  $. As previously, matrix ${\bf A}_N$ is denoted by ${\bf A}$ in order to short the notations. For this, we take Eq. (\ref{eq:equation-R-T}) as a starting
point, and assume that $z$ belongs to the set $E_{N}$ defined by
(\ref{eq:def-EN}). As $z\in E_{N}$, the series
\[
\sum_{n=0}^{+\infty}{\Phi}_{1}^{(n)}\left(  {\Phi}_{1}({\boldsymbol\Delta
})\right)
\]
is convergent and
\begin{equation}
\mathbf{R}-\mathbf{T}=\sum_{n=0}^{+\infty}{\Phi}_{1}^{(n)}\left(  {\Phi}
_{1}({\boldsymbol\Delta})\right)  \label{eq:resolution-equation-R-T}
\end{equation}
Therefore,
\[
\frac{1}{ML}\mathrm{tr}\left(  \mathbf{A}(\mathbf{R}-\mathbf{T})\right)
=\sum_{n=0}^{+\infty}\frac{1}{ML}\mathrm{tr}\left(  \mathbf{A}{\Phi}
_{1}^{(n+1)}({\boldsymbol\Delta})\right)
\]
or equivalently,
\[
\frac{1}{ML}\mathrm{tr}\left(  \mathbf{A}(\mathbf{R}-\mathbf{T})\right)
=\sum_{n=0}^{+\infty}\frac{1}{ML}\mathrm{tr}\left(  {\boldsymbol\Delta\Phi
}_{1}^{t(n+1)}(\mathbf{A})\right)
\]
where ${\Phi}_{1}^{t}$ is the operator defined by (\ref{eq:def-transpose-Phi1}).
The strategy of the proof consists in showing that the series 
$\sum_{n=0}^{\infty}{\Phi}_{1}^{t(n+1)}(\mathbf{A})$ is convergent, and that
\begin{equation}
\sup_{N}\left\Vert \sum_{n=0}^{\infty}{\Phi}_{1}^{t(n+1)}(\mathbf{A}
)\right\Vert <+\infty.\label{eq:bornitude-series}
\end{equation}
If (\ref{eq:bornitude-series}) holds, (\ref{eq:control-trace-Delta}) will
imply that
\[
\left\vert \frac{1}{ML}\mathrm{tr}\left(  \left(  \sum_{n=0}^{\infty}{\Phi
}_{1}^{t(n+1)}(\mathbf{A})\right)  {\boldsymbol\Delta}\right)  \right\vert
\leq C(z)\,\frac{L}{MN}
\]
if $z\in E_{N}$.

In order to establish the convergence of the series as well as
(\ref{eq:bornitude-series}), we use the following inequality: for each
$ML\times ML$ deterministic matrix $\mathbf{A}$, it holds that
\begin{equation}
{\Phi}_{1}^{t(n)}(\mathbf{A})\left[  {\Phi}_{\mathbf{T}}^{t(n)}(\mathbf{I)}
\right]  ^{-1}\left(  {\Phi}_{1}^{t(n)}(\mathbf{A})\right)  ^{H}\leq{\Phi
}_{\mathbf{R}^{H}}^{t(n)}(\mathbf{A}^{H}\mathbf{A})\label{eq:inequality-Phi1t}
\end{equation}
where the operators ${\Phi}_{\mathbf{T}}^{t}$ and ${\Phi}_{\mathbf{R}^{H}}
^{t}$ are defined by
\begin{equation}
{\Phi}_{\mathbf{T}}^{t}(\mathbf{X})=c_{N}\,|z|^{2}\,{\Psi}\left(
\widetilde{\mathbf{T}}^{T}\overline{{\boldsymbol\Psi}}(\mathbf{T}
\mathbf{X}\mathbf{T}^{H})\widetilde{\mathbf{T}}^{\ast}\right)  \;,{\Phi
}_{\mathbf{R}^{H}}^{t}(\mathbf{X})=c_{N}\,|z|^{2}\,{\Psi}\left(
\widetilde{\mathbf{R}}^{\ast}\overline{{\Psi}}(\mathbf{R}^{H}\mathbf{X}
\mathbf{R})\widetilde{\mathbf{R}}^{T}\right)  \label{eq:def-Phi-R-T-t}
\end{equation}
The proof is similar to the proof of (\ref{eq:inequality-fundamental-Phi1}),
and is thus omitted. We remark that operators ${\Phi}_{\mathbf{T}}$ and
${\Phi}_{\mathbf{T}}^{t}$ are linked by the formula:
\begin{equation}
{\Phi}_{\mathbf{T}}^{t}(\mathbf{X})=\mathbf{T}^{-1} {\Phi
}_{\mathbf{T}}(\mathbf{T}\mathbf{X}\mathbf{T}^{H})\mathbf{T}^{-H}
\label{eq:link-PhiT-PhiTt}
\end{equation}
which implies that
\begin{equation}
{\Phi}_{\mathbf{T}}^{t(n)}(\mathbf{X})=\mathbf{T}^{-1} {\Phi
}_{\mathbf{T}}^{(n)}(\mathbf{T}\mathbf{X}\mathbf{T}^{H})\mathbf{T}
^{-H}\label{eq:link-PhiTn-PhiTtn}
\end{equation}
Since $\sum_{n=0}^{+\infty} {\Phi}_{\mathbf{T}}^{(n)}(\mathbf{I}
)<+\infty$, it holds that
\[
\sum_{n=0}^{+\infty} {\Phi}_{\mathbf{T}}^{t(n)}(\mathbf{I})\leq
\Vert\mathbf{T}^{-1}\Vert^{2}\Vert\mathbf{T}\Vert^{2}\sum_{n=0}^{+\infty
} {\Phi}_{\mathbf{T}}^{(n)}(\mathbf{I})<+\infty
\]
Moreover, we have already shown that $\sum_{n=0}^{+\infty} {\Phi
}_{\mathbf{T}}^{(n)}(\mathbf{I})\leq C(z)\;\mathbf{I}$, and that
$\mathbf{T}\mathbf{T}^{H}\geq\frac{\delta_{z}^{2}}{Q_{2}(|z|)}\mathbf{I}$, or
equivalently that $\Vert\mathbf{T}^{-1}\Vert^{2}\leq C(z)$. Therefore, it
holds that
\begin{equation}
\sum_{n=0}^{+\infty} {\Phi}_{\mathbf{T}}^{t(n)}(\mathbf{I})\leq
C(z)\;\mathbf{I.}\label{eq:borne-sup-sum-phiTtn}
\end{equation}
In order to control the series $\sum_{n=0}^{+\infty} {\Phi}
_{\mathbf{R}^{H}}^{(n)}(\mathbf{A}^{H}\mathbf{A})$, we remark that
\begin{equation}
{\Phi}_{\mathbf{R}^{H}}^{t}(\mathbf{X})=\mathbf{R}^{-H} {\Phi
}_{\mathbf{R}^{H}}(\mathbf{R}^{H}\mathbf{X}\mathbf{R})\mathbf{R}
^{-1}\label{eq:link-PhiR-PhiRt}
\end{equation}
which implies that
\begin{equation}
 {\Phi}_{\mathbf{R}^{H}}^{t(n)}(\mathbf{X})=\mathbf{R}^{-H} {\Phi
}_{\mathbf{R}^{H}}^{(n)}(\mathbf{R}^{H}\mathbf{X}\mathbf{R})\mathbf{R}
^{-1}\label{eq:link-PhiRn-PhiRtn}
\end{equation}
Using the same kind of arguments as above, we obtain that
\begin{equation}
\sum_{n=0}^{+\infty} {\Phi}_{\mathbf{R}^{H}}^{t(n)}(\mathbf{A}^{\ast
}\mathbf{A})\leq C(z)\;\mathbf{I}\label{eq:borne-sup-sum-phiRtn}
\end{equation}
if $z$ belongs to a set $E_{N}$ defined by (\ref{eq:def-EN}). Noting that
\[
\left\vert \mathbf{a}^{H}\sum_{n=0}^{\infty}{\Phi}_{1}^{t(n)}(\mathbf{A}
)\mathbf{b}\right\vert \leq\left[  \mathbf{a}^{H}\left(  \sum_{n=0}^{+\infty
} {\Phi}_{\mathbf{T}}^{t(n)}(\mathbf{I})\right)  \mathbf{a}\right]
^{1/2}\;\left[  \mathbf{b}^{H}\left(  \sum_{n=0}^{+\infty} {\Phi
}_{\mathbf{R}^{H}}^{t(n)}(\mathbf{A}^{\ast}\mathbf{A})\right)  \mathbf{b}
\right]  ^{1/2}
\]
we obtain that
\[
\Vert\sum_{n=0}^{\infty}{\Phi}_{1}^{t(n)}(\mathbf{A})\Vert\leq C(z)
\]
as soon as $z\in E_{N}$. This completes the proof of
(\ref{eq:rate-convergence-R-T-towards-0}).

\end{document}